\newtheorem{theorem}{Theorem}
\newtheorem{definition}[theorem]{Definition}
\newtheorem{lemma}[theorem]{Lemma}
\newtheorem{proposition}[theorem]{Proposition}
\numberwithin{equation}{section}
\numberwithin{theorem}{section}
\def\N{\mathbb{N}}
\def\R{\mathbb{R}}
\renewcommand{\epsilon}{\varepsilon}
\renewcommand{\rho}{\varrho}
\renewcommand{\widetilde}{\tilde}
\newcommand{\qtilde}{\tilde q}
\newcommand{\widetildelambda}{\widetilde{\lambda}}
\DeclareMathOperator{\supp}{supp}
\begin{document}
\title[Subquadratic parabolic systems with non-standard growth]{Very weak solutions of subquadratic parabolic systems with non-standard $p(x,t)$-growth}
\author[Q. Li]{Qifan Li}
\address{Qifan Li\\
Department of Mathematics, School of Sciences, Wuhan University of Technology\\
430070, 122 Luoshi Road, Wuhan, Hubei, P. R. China}
\email{qifan\_li@yahoo.com, qifan\_li@whut.edu.cn}
\maketitle
\begin{abstract}
The aim of this paper is to establish a higher integrability result for very weak solutions of certain  parabolic systems whose model is the parabolic
$p(x,t)$-Laplacian system. Under assumptions on the exponent function $p:\Omega_T=\Omega\times (0,T)\to\left(\frac{2n}{n+2},2\right]$, it is shown that any very weak solution $u:\Omega_T\rightarrow\mathbb{R}^N$ with $|Du|^{p(\cdot)(1-\varepsilon)}\in L^1(\Omega_T)$ belongs to the natural energy spaces, i.e. $|Du|^{p(\cdot)}\in L^1_{\operatorname{loc}}(\Omega_T)$, provided $\epsilon>0$ is small enough.
This extends the main result of [V. B\"ogelein and Q. Li, Nonlinear Anal., 98 (2014), pp. 190-225] to the subquadratic case.
\end{abstract}
\keywords{Keywords: Parabolic $p$-Laplacian, Non-standard growth condition, Higher integrability.}
\section{Introduction}
The reverse H\"older inequality for the weak and very weak solutions of parabolic systems was first studied by
Kinnunen and Lewis \cite{KL1,KL2}; see also \cite{bo,b,B} for the case of higher order systems. Later on Zhikov and Pastukhova \cite{ZP} and independently B\"ogelein and Duzaar \cite{BD} proved the higher integrability of weak solutions to parabolic systems with non-standard $p(x,t)$-growth whose model is the parabolic $p(x,t)$-Laplacian system:
$$\frac{\partial u}{\partial t} - \operatorname{div} \bigl(|Du|^{p(x,t)-2}Du\bigr)=\operatorname{div} \bigl(|F|^{p(x,t)-2}F\bigr),$$
where $p(x,t)$ is logarithmically continuous. Recently,
B\"ogelein and the author \cite{bl} studied the very weak solutions to this kind of parabolic systems with superquadratic growth. This problem was suggested as an open problem in the overview article \cite{HHLN}. In this paper, we extend the higher integrability result of \cite{bl} to the subquadratic case.

In the subquadratic case, the lower bound on $p(x,t)$, i.e. $p(x,t)>\frac{2n}{n+2}$, is a typical assumption in the regularity theory for non-linear parabolic equations, cf. \cite{Di}. Our proof is in the spirit of \cite{bl,KL2} and we will work with a "non-standard version" of the intrinsic geometry introduced by B\"ogelein and Duzaar \cite{BD}.
However, the proof in \cite{bl} strongly depended on the assumption that $p(x,t)\geq2$.
The major difficulty in our subquadratic case is that the parabolic cylinder $$Q_\rho^{(\lambda)}(z_0)=B_\rho(x_0)\times\left(t_0-\lambda^{\frac{2-p(z_0)}{p(z_0)}}\rho^2,t_0+\lambda^{\frac{2-p(z_0)}{p(z_0)}}\rho^2\right)$$ studied in \cite{BD,bl}
cannot directly apply to the subquadratic case, since $Q_\rho^{(\lambda)}(z_0)\subset \Omega_T$ may fail when $p(z_0)<2$ and $\lambda$ large enough.
So we use another scaled parabolic cylinder $\widetilde Q_\rho^{(\lambda)}(z_0)$:
$$\widetilde Q_\rho^{(\lambda)}(z_0)=B_{\lambda^{\frac{p(z_0)-2}{2p(z_0)}}\rho}(x_0)\times\left(t_0-\rho^2,t_0+\rho^2\right)$$
instead to deal with our subquadratic case. We also remark that as pointed out by Kopaliani \cite{Ko}, the strong maximal functions are not bounded in $L^{p(\cdot)}$ unless $p(\cdot)\equiv \text{constant}$. As a consequence, we have to estimate the strong maximal functions in the usual Lebesgue spaces in the proof.

This paper is organized as follows. We state the main result in \S 2. In \S 3, we provide some preliminary material, while in \S 4 we construct the testing function for the parabolic system. In \S 5 we give the proof of the Caccioppoli inequality. \S 6 is devoted to the proof of the reverse H\"older inequality under an additional assumption. Finally, in \S 7, we follow with the arguments as \cite[\S 9]{bl} and \cite[\S 7]{BD} to obtain the higher integrability of very weak solutions. Since the argument is standard, we only sketch the proof in this section.

\section{Statement of the Main Result}
In the following $\Omega$ will denote a bounded domain in $\mathbb{R}^n$ with $n\geq2$ and $\Omega_T=\Omega\times(0,T)\subset \mathbb{R}^{n+1}$, $T>0$
will be the space-time cylinder. We denote by $Du$ the differentiation with respect to the space variables, while $\partial_tu$ stands for the time derivative.
Points in $\mathbb{R}^{n+1}$ will be denoted by $z=(x,t)$. We shall use the parabolic cylinders of the form $Q_{\rho}(z_0)=B_{\rho}(x_0)\times(t_0-\rho^2,t_0+\rho^2)$ where $B_{\rho}(x_0)=\{x\in \mathbb{R}^n:|x-x_0|\leq\rho\}$.

We consider degenerate parabolic systems of the following type
\begin{equation}\label{3}\partial_t u - \operatorname{div} A(z,Du)=B(z,Du),\end{equation} where the vector fields $A,B:\Omega_T\times\mathbb{R}^{nN}\rightarrow\mathbb{R}^{nN}$ satisfy the following non-standard $p(z)$-growth and ellipticity conditions:
\begin{equation}\label{4}\begin{split}&|A(z,\xi)|\leq L(1+|\xi|+|F|)^{p(z)-1}\\&|B(z,\xi)|\leq L(1+|\xi|+|F|)^{p(z)-1}\\&\langle A(z,\xi),\xi\rangle\geq \nu|\xi|^{p(z)}-|F|^{p(z)}
\end{split}\end{equation}for any $z\in \Omega_T$, $\xi\in\mathbb{R}^{nN}$. Here, $F\colon\Omega_T\to\mathbb{R}^{nN}$ with $|F|^{p(\cdot)}\in L^1(\Omega_T)$ and $0<\nu<L$ are fixed structural parameters.
For the exponent function $p:\ \Omega_T\rightarrow(\frac{2n}{n+2},2]$ we assume that it is continuous with a moduls of continuity $\omega:\ \Omega_T\rightarrow[0,1]$. More precisely, we assume that
\begin{equation}\label{2}\frac{2n}{n+2}<\gamma_1\leq p(z)\leq 2\ \ \ \text{and}\ \ \ |p(z_1)-p(z_2)|\leq\omega(d_P(z_1,z_2)),
\end{equation}
holds for any $z,\ z_1,\ z_2\in\Omega_T$ and some $\frac{2n}{n+2}<\gamma_1\leq2$. Since our estimates are of local nature, it is not restrictive to assume a lower bound for $p(\cdot)$.
As usual, the parabolic distance $d_P$ is given by
$$d_P(z_1,z_2):=\max\left\{|x_1-x_2|,\sqrt{|t_1-t_2|}\right\},\quad\mbox{for $z_1=(x_1,t_1),z_2=(x_2,t_2)\in\mathbb{R}^{n+1}$.}$$
The modulus of continuity $\omega$ is assumed to be a concave, non-decreasing function satisfying the following weak logarithmic
continuity condition:
\begin{equation}\label{1}\sup_{0\leq\rho\leq1}\omega(\rho)\log\left(\tfrac{1}{\rho}\right)<L<+\infty.\end{equation}
The spaces $L^p(\Omega,R^N)$ and $W^{1,p}(\Omega,R^N)$ are the usual Lebesgue and Sobolev spaces. Moreover, for a variable exponent $p(\cdot)$, we denote by $L^{p(\cdot)}(\Omega_T,\mathbb{R}^k)$, $k\in\N$, the variable exponent Lebesgue space
$$
	L^{p(\cdot)}(\Omega_T,\mathbb{R}^k):=\left\{v\in L^1(\Omega_T,\mathbb{R}^{k}):\ \int_{\Omega_T}|v|^{p(\cdot)}dz<\infty\right\}.
$$
Throughout the paper, unless otherwise stated, we denote by $c$ the constant depends only on $n$, $N$, $L$, $\nu$ and $\gamma_1$.

\begin{definition}\upshape
Let $\epsilon\in (0,1)$. We say that $u\in L^2(\Omega_T,\R^N)$ is a {\it very weak solution to the parabolic system \eqref{3} with deficit $\epsilon$} if and only if
$$
u\in L^{p(\cdot)(1-\epsilon)}(\Omega_T,\R^{N})
\quad\mbox{and}\quad
Du\in L^{p(\cdot)(1-\epsilon)}(\Omega_T,\R^{Nn})
$$
and
\begin{equation}\label{5}
\int_{\Omega_T}u\cdot \partial_t \varphi - \langle A(z,Du), D\varphi\rangle\, dz=\int_{\Omega_T}B(z,Du)\cdot\varphi \,dz\end{equation}
holds, whenever
$\varphi\in C_0^{\infty}(\Omega_T,\mathbb{R}^N)$.
\end{definition}

The following theorem is our main result.
\begin{theorem}\label{main theorem}Let $p:\ \Omega_T\rightarrow[\gamma_1,2]$ satisfies (\ref{2}) and (\ref{1}). Then there exists $\varepsilon_0=\varepsilon_0(n,N,\nu,L,\gamma_1)>0$ such that the following holds: Whenever $u\in L^2(\Omega_T,\mathbb{R}^N)\cap L^{p(z)(1-\varepsilon)}(\Omega_T,\mathbb{R}^N)$ and $|Du|^{p(z)(1-\varepsilon)}\in L^1(\Omega_T)$ with some $\varepsilon\in(0,\epsilon_0]$ is a very weak solution of the parabolic system (\ref{5}) under the assumptions (\ref{4}) and $F\in L^{p(z)}(\Omega_T,\mathbb{R}^{nN})$, then we have
$$|Du|^{p(z)}\in L_{\operatorname{loc}}^1(\Omega_T).$$
Moreover, for $M\geq1$ there exists a radius $r_0=r_0(n,N,\nu,L,\gamma_1,M)$ such that there holds: If
\begin{equation}\label{6}\int_{\Omega_T}\left(|u|+|Du|+|F|+1\right)^{p(\cdot)(1-\varepsilon)}dz\leq M\end{equation}and $\varepsilon\in(0,\varepsilon_0]$, then for any parabolic cylinder $Q_{2r}(\mathfrak z_0)\subseteq\Omega_T$ with $r\in(0,r_0]$ there holds
\begin{equation}\label{7}\begin{split}\fint_{Q_{r}(\mathfrak z_0)}|Du|^{p(z)}dz\leq & c\left(\fint_{Q_{2r}(\mathfrak z_0)}(|Du|+|F|+1)^{p(z)(1-\varepsilon)}dz\right)^{1+ 2\epsilon d}\\&+c\fint_{Q_{2r}(\mathfrak z_0)}\left(|F|+1\right)^{p(z)}dz,
\end{split}\end{equation}where
$$d\equiv d(p_0)=\frac{2p_0}{p_0(n+2)-2n}$$and $p_0=p(\mathfrak z_0)$.
\end{theorem}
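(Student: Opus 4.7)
The plan is to combine the technical ingredients developed in the preceding sections --- the test-function construction of \S4, the Caccioppoli inequality of \S5, and the conditional reverse H\"older estimate of \S6 --- through a parabolic Calder\'on--Zygmund type stopping-time and Vitali covering argument, following the scheme of \cite[\S9]{bl} and \cite[\S7]{BD}, but now carried out on the modified intrinsic cylinders $\widetilde Q_\rho^{(\lambda)}(z_0)$ tailored to the subquadratic case. Under the a priori bound \eqref{6} I fix $\mathfrak z_0$, choose a large threshold $\lambda_0$ proportional to a suitable power of the $p(\cdot)(1-\varepsilon)$-average of $(|Du|+|F|+1)$ on $Q_{2r}(\mathfrak z_0)$, and study, for $\lambda>\lambda_0$, the super-level set $E(\lambda)=\{z\in Q_r(\mathfrak z_0):|Du(z)|^{p(z)}>\lambda^{p(z)}\}$.

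At each Lebesgue point $z_0\in E(\lambda)$ I run an exit-time/continuity argument on the map $\rho\mapsto\fint_{\widetilde Q_\rho^{(\mu)}(z_0)}(|Du|+|F|+1)^{p(\cdot)(1-\varepsilon)}\,dz$, which is large at small scales and small at scales of order $r$, to select a radius $\rho_{z_0}$ and an intrinsic parameter $\lambda_{z_0}\gtrsim\lambda$ realising the intrinsic coupling together with the additional assumption imposed in \S6. The conditional reverse H\"older inequality then yields on $\widetilde Q_{\rho_{z_0}/2}^{(\lambda_{z_0})}(z_0)$ a bound on the mean of $|Du|^{p(\cdot)}$ by $c\lambda_{z_0}^{p(z_0)}$ plus a forcing term involving $F$. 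A parabolic Vitali covering extracts a disjoint countable subfamily whose dilations cover $E(\lambda)$ up to measure zero, and summing produces a distributional inequality relating the super-level sets of $|Du|^{p(\cdot)}$ and of $(|Du|+|F|+1)^{p(\cdot)(1-\varepsilon)}$ at comparable heights, with the scaling loss dictated by the anisotropic spatial radius $\lambda^{\frac{p(z_0)-2}{2p(z_0)}}\rho$ of the subquadratic cylinder.

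Multiplying the resulting distribution inequality by a small positive power of $\lambda$ and integrating over $\lambda\in(\lambda_0,\infty)$ via the layer-cake formula --- after first truncating $|Du|^{p(\cdot)}$ at a finite height to make the manipulation rigorous in the very weak setting --- delivers, up to absorption, the exponent-gain estimate \eqref{7} with $d=d(p_0)$. The truncation is removed by a monotone convergence argument, legitimate provided $\varepsilon_0$ is fixed small enough in terms of $n,N,\nu,L,\gamma_1$ so that the absorbed term carries a sufficiently small constant; the qualitative conclusion $|Du|^{p(z)}\in L^1_{\operatorname{loc}}(\Omega_T)$ follows from \eqref{7} by a covering argument.

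The principal obstacles I anticipate are threefold. First, the pointwise oscillation of $p(\cdot)$ on an intrinsic cylinder must be controlled: the log-continuity \eqref{1} together with a careful choice of the threshold $r_0=r_0(n,N,\nu,L,\gamma_1,M)$ is needed to ensure that $(|Du|+1)^{p(z)-p(z_0)}$ is essentially bounded on $\widetilde Q_\rho^{(\lambda)}(z_0)$, a task made delicate by the fact that in the subquadratic regime the spatial radius $\lambda^{\frac{p(z_0)-2}{2p(z_0)}}\rho$ may be much larger than $\rho$. Second, as Kopaliani \cite{Ko} observed, the strong maximal function is unbounded on $L^{p(\cdot)}$ outside the constant-exponent case, so both the exit-time selection and the Vitali sum must be carried out in the ordinary Lebesgue space with exponent $p(z_0)(1-\varepsilon)$ frozen at the reference point, and then transferred back to $L^{p(\cdot)}$ pointwise. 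Closing the resulting self-improvement loop with a strictly positive gain, which pins down the quantitative form of $\varepsilon_0$, is the step I expect to be the most delicate.
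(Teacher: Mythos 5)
Your proposal follows essentially the same route the paper takes in \S 7: a stopping-time selection of intrinsic cylinders satisfying the coupling \eqref{lower}--\eqref{upper}, application of the conditional reverse H\"older estimate of Proposition \ref{reverse holder inequality}, a parabolic Vitali covering of the upper level set, and a layer-cake integration (after the usual truncation) to promote $|Du|^{p(\cdot)(1-\varepsilon)}\in L^1$ to $|Du|^{p(\cdot)}\in L^1_{\operatorname{loc}}$, with Lemma \ref{iteration lemma} handling the absorption over $r\le r_1<r_2\le 2r$.

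Three points of detail that your sketch glosses over and that matter for making the argument close cleanly. First, the upper level set should be taken as $E(\lambda,r_1)=\{z\in Q_{r_1}:|Du(z)|^{p(z)}>\lambda\}$, exactly as in the paper, \emph{not} as $\{|Du|^{p(z)}>\lambda^{p(z)}\}$ (equivalently $\{|Du|>\lambda\}$): only the former decomposes $\int |Du|^{p(\cdot)}\,dz$ directly under the layer-cake formula, whereas with your parameterization the resulting distributional inequality produces a kernel $p(z)\lambda^{p(z)-1}$ that does not factor and must be handled separately. Second, there is only one intrinsic parameter: the same $\lambda$ indexes both the level set and the cylinder $Q_\rho^{(\lambda)}(z_0)$, and the exit-time argument varies only $\rho$; your phrasing ``an intrinsic parameter $\lambda_{z_0}\gtrsim\lambda$'' suggests a two-parameter stopping that is not needed and would complicate the Vitali covering. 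Third, after summing over the covering and integrating in $\lambda$, the exponent that appears is $1+2\epsilon\big[\tfrac{1}{d(p_m)}+\tfrac{2}{p_M}-\tfrac{2}{p_m}\big]^{-1}$ in terms of $p_m=\inf_{Q_{2r}}p$ and $p_M=\sup_{Q_{2r}}p$; converting this to the stated $1+2\epsilon\,d(p_0)$ with $p_0=p(\mathfrak z_0)$ requires the log-continuity \eqref{1} together with the bound coming from \eqref{6} on the quantity raised to the excess exponent, and this bookkeeping step (carried out in \cite[pp.~245--247]{BD}) should be made explicit since the radius bound $r_0(M)$ enters precisely there. None of these is a structural obstruction --- they are implementation details --- but they are where the proof actually lives once the reverse H\"older inequality is in hand.
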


\section{Preliminary material and notation}\label{preliminary}
For a point $z_0=(x_0,t_0)\in \mathbb{R}^{n+1}$ and parameters $\rho>0$, $\lambda>0$, we define the scaled cylinder $Q^{(\lambda)}_{\rho}(z_0)$ by $Q^{(\lambda)}_{\rho}(z_0):=B_{\rho}^{(\lambda)}(x_0)\times\Lambda_{\rho}(z_0)$ where $ \Lambda_{\rho}(z_0):=(t_0-\rho^2,t_0+\rho^2),$ $ B_{\rho}^{(\lambda)}(x_0):=\{x\in \mathbb{R}^n:|x-x_0|\leq \lambda^{(p_0-2)/(2p_0)}\rho\}$ and $p_0=p(z_0)$. For $\alpha>0$, we write $\alpha Q^{(\lambda)}_{\rho}(z_0)$ for the scaled cylinder $Q^{(\lambda)}_{\alpha\rho}(z_0)$.
Moreover, for a function $f\in L^1(\R^{n+1}, \R^k)$, $k\in\N$ we define its strong maximal function by
$$
	M(f)(z)
	:=
	\sup\left\{\fint_Q|f|d\tilde{z}:\ \ z\in Q,\ \  Q\ \mathrm{is}\ \mathrm{parabolic}\ \mathrm{cylinder}\right\}.
$$
Here, by parabolic cylinder we mean that $Q$ is a cylinder of the form $B\times\Lambda$ where $B$ is a ball in $\R^n$ and $\Lambda\subset\R$ is an interval.
To simplify the notations, we write $f_{G}$ instead of $\fint_Gfdz$ for any subset $G\subset\mathbb{R}^{n+1}$.
We will use the following iteration lemma, which is a standard tool and can be found in \cite{G}.

\begin{lemma}\label{iteration lemma}Let $0<\theta<1$, $C_1,C_2\geq0$ and $\beta>0$. Then there exists a constant $A=A(\theta,\beta)>0$ such that there holds: For any non-negative bounded function $\phi(t)$ satisfying
$$\phi(t)\leq\theta\phi(s)+C_1(s-t)^{-\beta}+C_2\ \ \ \text{for\ \ all}\ \ \ \ 0<r\leq t<s\leq\rho,$$
we have $$\phi(t)\leq A\left[C_1(\rho-r)^{-\beta}+C_2\right].$$
\end{lemma}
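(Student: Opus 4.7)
The plan is the standard Giaquinta-type geometric iteration: starting from the smaller endpoint $r$, I build an increasing sequence of intermediate radii that approach $\rho$ at a geometric rate, and apply the assumed sub-inequality along consecutive pairs. Each application contributes a factor $\theta<1$ in front of $\phi$, so after $k$ iterations the $\phi$-term carries the weight $\theta^k$, which the boundedness of $\phi$ annihilates in the limit; the cost terms meanwhile assemble into a geometric series whose convergence will be guaranteed by a sharp choice of the geometric rate.

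Concretely, fix a parameter $\tau\in(0,1)$ to be chosen below, and define inductively
$$
t_0:=r,\qquad t_{i+1}:=t_i+(1-\tau)\tau^i(\rho-r)\quad(i\ge 0),
$$
so that $t_i=r+(1-\tau^i)(\rho-r)\in[r,\rho)$, $t_i\nearrow\rho$, and $t_{i+1}-t_i=(1-\tau)\tau^i(\rho-r)$. Applying the hypothesis to the pair $(t_i,t_{i+1})$ gives
$$
\phi(t_i)\le\theta\,\phi(t_{i+1})+\frac{C_1}{(1-\tau)^\beta\tau^{i\beta}(\rho-r)^\beta}+C_2,
$$
and iterating this inequality $k$ times produces
$$
\phi(r)\le\theta^k\phi(t_k)+\frac{C_1}{(1-\tau)^\beta(\rho-r)^\beta}\sum_{i=0}^{k-1}\bigl(\theta\tau^{-\beta}\bigr)^i+C_2\sum_{i=0}^{k-1}\theta^i.
$$

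The decisive step is to pick $\tau\in(\theta^{1/\beta},1)$, for instance $\tau:=\tfrac12(1+\theta^{1/\beta})$, so that $q:=\theta\tau^{-\beta}<1$ and both series are summable. Letting $k\to\infty$ and using the boundedness of $\phi$ together with $\theta<1$ to force $\theta^k\phi(t_k)\to 0$, one arrives at
$$
\phi(r)\le\frac{C_1}{(1-\tau)^\beta(1-q)(\rho-r)^\beta}+\frac{C_2}{1-\theta}\le A\bigl[C_1(\rho-r)^{-\beta}+C_2\bigr]
$$
with $A=A(\theta,\beta)$. Applying the same argument with any admissible intermediate value playing the role of the lower endpoint yields the corresponding bound at that point, giving the statement as written.

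There is no genuine obstacle here; the only subtlety is the interplay $\tau^\beta>\theta$, which is precisely what makes the series $\sum(\theta\tau^{-\beta})^i$ summable and keeps the constant $A$ dependent only on $(\theta,\beta)$. The boundedness assumption on $\phi$ enters solely to kill the remainder $\theta^k\phi(t_k)$ when passing to the limit, and is otherwise unused.
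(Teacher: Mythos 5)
Your proof is correct and is exactly the standard Giaquinta iteration that the paper invokes by citing \cite{G} without giving a proof: geometric intermediate radii $t_i=r+(1-\tau^i)(\rho-r)$ with $\tau^{\beta}>\theta$, iteration of the hypothesis, summation of the geometric series, and the boundedness of $\phi$ to kill the remainder $\theta^k\phi(t_k)$. One small caveat: your closing sentence overstates the conclusion --- running the argument from an intermediate point $t$ yields $\phi(t)\leq A\left[C_1(\rho-t)^{-\beta}+C_2\right]$, not the bound with $(\rho-r)^{-\beta}$ (which would in fact be false for all $t$ simultaneously, e.g. for $\phi(t)=\min\{(\rho-t)^{-\beta},K\}$ with $K$ large), so the lemma should be read, as it is used in the paper, as the estimate at the left endpoint $t=r$.
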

Next, we state Gagliardo-Nirenberg's inequality in a form which
shall be convenient for our purposes later.
\begin{lemma}\label{Gagliardo-Nirenberg}Let $B_{\rho}(x_0)\subset\mathbb{R}^n$ with $0<\rho\leq1$, $1\leq\sigma,q,r<+\infty$ and $\theta\in(0,1)$ such that $-n/\sigma\leq\theta(1-n/q)-(1-\theta)n/r$. Then there exists a constant $c=c(\sigma,n)$ such that for any $u\in W^{1,q}(B_{\rho}(x_0))$ there holds:
$$\fint_{B_{\rho}(x_0)}\left|\frac{u}{\rho}\right|^{\sigma}dx\leq c\left(\fint_{B_{\rho}(x_0)}\left|\frac{u}{\rho}\right|^q+|Du|^qdx\right)^{\theta \sigma/q}\left(\fint_{B_{\rho}(x_0)}\left|\frac{u}{\rho}\right|^rdx\right)^{(1-\theta)\sigma/r}.$$
\end{lemma}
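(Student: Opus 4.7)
The plan is to reduce the inequality to the fixed unit ball by a scaling argument and then invoke the classical Gagliardo--Nirenberg interpolation on $B_1$. First I would introduce the rescaled function $v\colon B_1(0)\to\mathbb{R}$ by
$$v(y):=\frac{u(x_0+\rho y)}{\rho},$$
so that $Dv(y)=Du(x_0+\rho y)$ and a direct change of variables gives, for every $s\geq 1$,
$$\fint_{B_1(0)}|v|^s\,dy=\fint_{B_\rho(x_0)}\left|\tfrac{u}{\rho}\right|^s\,dx,\qquad \fint_{B_1(0)}|Dv|^q\,dy=\fint_{B_\rho(x_0)}|Du|^q\,dx.$$
This rescaling is tailored to absorb the factor $1/\rho$ on both sides of the inequality simultaneously, and it reduces the claim to the $\sigma$-th power of the estimate
$$\|v\|_{L^\sigma(B_1)}\leq c\,\|v\|_{W^{1,q}(B_1)}^{\theta}\,\|v\|_{L^r(B_1)}^{1-\theta},$$
to be proved on the fixed domain $B_1$. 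Note that the hypothesis $-n/\sigma\leq\theta(1-n/q)-(1-\theta)n/r$ is, after dividing by $-n$, exactly $1/\sigma\geq\theta(1/q-1/n)+(1-\theta)/r$.

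Next I would establish this inequality on $B_1$ by combining a Sobolev embedding with a Hölder interpolation. Define the critical exponent $\sigma_*\in[\sigma,\infty]$ by the equality $1/\sigma_*=\theta(1/q-1/n)_++(1-\theta)/r$. In the subcritical case $q<n$, Sobolev embedding yields $W^{1,q}(B_1)\hookrightarrow L^{q^*}(B_1)$ with $q^*=nq/(n-q)$, and the identity $1/\sigma_*=\theta/q^*+(1-\theta)/r$ places $\sigma_*$ inside the interpolation scale between $L^{q^*}$ and $L^r$, so Hölder gives
$$\|v\|_{L^{\sigma_*}(B_1)}\leq\|v\|_{L^{q^*}(B_1)}^{\theta}\,\|v\|_{L^r(B_1)}^{1-\theta}\leq c\,\|v\|_{W^{1,q}(B_1)}^{\theta}\,\|v\|_{L^r(B_1)}^{1-\theta}.$$
The borderline cases $q=n$ (using $W^{1,n}(B_1)\hookrightarrow L^s(B_1)$ for arbitrarily large finite $s$) and $q>n$ (using $W^{1,q}(B_1)\hookrightarrow L^\infty(B_1)$) are handled identically, choosing the embedding target large enough to make the Hölder interpolation admissible.

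Finally, I would pass from $\sigma_*$ to the possibly smaller prescribed exponent $\sigma$. Since $\sigma\leq\sigma_*$ and $|B_1|$ is finite, Hölder's inequality gives $\|v\|_{L^\sigma(B_1)}\leq c(\sigma,n)\,\|v\|_{L^{\sigma_*}(B_1)}$, and substituting the previous estimate yields the desired inequality for $v$ on $B_1$. Raising to the $\sigma$-th power and translating back via the identities in Step~1 produces the statement for $u$ on $B_\rho(x_0)$. There is no serious obstacle here: the only non-mechanical step is identifying the correct rescaling $v(y)=u(x_0+\rho y)/\rho$, which is precisely what makes both the gradient term $|Du|^q$ and the zero-order term $|u/\rho|^q$ (together with the term $|u/\rho|^r$ and the left-hand side $|u/\rho|^\sigma$) dimensionless; once this is done, the rest is the textbook Sobolev-plus-Hölder argument on the unit ball.
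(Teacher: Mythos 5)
The paper states Lemma \ref{Gagliardo-Nirenberg} without proof, as a known convenient form of the Gagliardo--Nirenberg inequality, and your rescaling argument is the natural way to verify it: the substitution $v(y)=u(x_0+\rho y)/\rho$ does give $Dv(y)=Du(x_0+\rho y)$ and the stated identities between averages, the hypothesis is indeed equivalent to $1/\sigma\geq\theta(1/q-1/n)+(1-\theta)/r$, and in the subcritical case $q<n$ your chain (Sobolev embedding $W^{1,q}(B_1)\hookrightarrow L^{q^*}(B_1)$, Lyapunov--H\"older interpolation at $\sigma_*$, which is legitimate because $\theta\sigma_*/q^*+(1-\theta)\sigma_*/r=1$, and lowering $\sigma_*$ to $\sigma$ at the cost of a factor $|B_1|^{1/\sigma-1/\sigma_*}$) is complete and correct. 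This in particular covers every use of the lemma in the paper, since there it is only applied with $q=\tilde p_1<2\leq n$ and $q=q_1=2n/(n+2-2\epsilon)<2\leq n$.

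There is, however, a genuine gap in your treatment of $q\geq n$, which you claim is ``handled identically.'' For $q>n$ you set $1/\sigma_*=\theta(1/q-1/n)_{+}+(1-\theta)/r=(1-\theta)/r$, but the hypothesis only guarantees $1/\sigma\geq\theta(1/q-1/n)+(1-\theta)/r$, and since $\theta(1/q-1/n)<0$ this does not imply $\sigma\leq\sigma_*$; hence your final step $\|v\|_{L^\sigma(B_1)}\leq c\|v\|_{L^{\sigma_*}(B_1)}$ may be unavailable. Moreover, replacing $L^{q^*}$ by $L^\infty$ (or by $L^s$ with $s$ large when $q=n$) and interpolating between $L^\infty$ (resp.\ $L^s$) and $L^r$ forces the weight on the $W^{1,q}$-norm to be some $\theta'>\theta$, so one obtains $\|v\|_{L^\sigma}\leq c\|v\|_{W^{1,q}}^{\theta'}\|v\|_{L^r}^{1-\theta'}$ with the wrong exponent pair, and there is no way to trade $\|v\|_{W^{1,q}}^{\theta'-\theta}$ for $\|v\|_{L^r}^{\theta'-\theta}$. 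The same obstruction appears in the borderline case $q=n$, $\sigma=r/(1-\theta)$, where no finite embedding target $s$ yields the weight $\theta$. So, as a proof of the lemma in the stated generality $1\leq q<\infty$, the argument is incomplete; you should either restrict the statement to $q<n$ (which suffices for this paper) or invoke the full Nirenberg interpolation theorem for the remaining cases. A minor further point: your constant also depends on $q$, $r$ and $\theta$ (through the Sobolev constant and the interpolation exponents), not only on $(\sigma,n)$ as claimed in the lemma; this is harmless in the present paper, where these exponents are determined by $n$ and $\gamma_1$.
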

We now reformulate the parabolic system (\ref{5}) in its Steklov form as follows:
\begin{equation}\label{11}\int_{\Omega}\partial_t[u]_h(\cdot,t) \varphi + \langle [A(z,Du)]_h,D\varphi\rangle(\cdot,t) dx=-\int_{\Omega}\langle[B(z,Du)]_h,\varphi\rangle (\cdot,t) dx\end{equation}for all
$\varphi\in C_0^{\infty}(\Omega,\mathbb{R}^N)$ and a.e. $t\in(0,T)$. For the proof of (\ref{11}), we refer the reader to \cite[Chapter 8.2]{bo}.

Since we have to derive estimates on intersections of parabolic cylinders, we will formulate Sobolev-Poincar\'e type estimates for very general types of sets.
The first one can be deduced from \cite[Lemma 4.1, Lemma 4.2]{B}, \cite[Lemma 5.1]{BD} and the second Lemma can be easily derived from Lemma \ref{sobolev1}, \cite[Corollary 5.2]{bl}. The proof of these two Lemmas will be omitted.
\begin{lemma}\label{sobolev1}
Let $u$ be a very weak solution to \eqref{3} with \eqref{4} and deficit $\epsilon>0$.
Suppose that $\widetilde{\Omega}\Subset\Omega$ is a convex open set such that $B_{\rho}(y)\subset\widetilde{\Omega}\subset B_{\alpha\rho}(y)$ for some $y\in \mathbb{R}^n$, $0<\rho\leq1$ and $\alpha>1$ and $T_1,T_2\subset(0,T)$ are two intervals. Then for $1\leq\theta\leq\inf_{z\in \widetilde{\Omega}\times (T_1\cup T_2)}p(z)(1-\epsilon)$, there holds
\[\begin{split}
\fint_{\widetilde{\Omega}\times T_1}|u-(u)_{\widetilde{\Omega}\times T_2}|^{\theta}dz
&\leq
c\,\rho^{\theta}\left(\fint_{\widetilde{\Omega}\times T_1}|Du|^{\theta}dz+\fint_{\widetilde{\Omega}\times T_2}|Du|^{\theta}dz\right)\\
&\phantom{\le\ }
+c\,\rho^{-\theta}\left(\int_{T_1\cup T_2}\fint_{\widetilde{\Omega}}(1+|Du|+|F|)^{p(\cdot)-1}dz\right)^{\theta}
\end{split}\]
where the constant $c$ depends only on $n,N,L,\gamma_1$ and $\alpha$.
\end{lemma}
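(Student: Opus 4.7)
The plan is to decompose the full oscillation into a spatial part, which is controlled by a classical Poincar\'e inequality, and a temporal part, which is controlled by testing the equation. Writing $(u)_{\widetilde\Omega}(t):=\fint_{\widetilde\Omega}u(\cdot,t)\,dx$, the identity $(u)_{\widetilde\Omega\times T_2}=\fint_{T_2}(u)_{\widetilde\Omega}(s)\,ds$ and the triangle inequality give, pointwise in $(x,t)\in\widetilde\Omega\times T_1$,
\[
|u(x,t)-(u)_{\widetilde\Omega\times T_2}|^\theta
\leq c\,|u(x,t)-(u)_{\widetilde\Omega}(t)|^\theta
+c\fint_{T_2}|(u)_{\widetilde\Omega}(t)-(u)_{\widetilde\Omega}(s)|^\theta\,ds.
\]
Averaging over $(x,t)\in\widetilde\Omega\times T_1$ reduces matters to (i) a spatial Poincar\'e bound at fixed time, and (ii) a pointwise bound on the temporal oscillation of the slice averages.

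For (i), since $\widetilde\Omega$ is convex with $B_\rho(y)\subset\widetilde\Omega\subset B_{\alpha\rho}(y)$, the classical Poincar\'e inequality on convex domains yields, for a.e.\ $t$,
\[
\fint_{\widetilde\Omega}|u(x,t)-(u)_{\widetilde\Omega}(t)|^\theta\,dx
\leq c\,\rho^\theta\fint_{\widetilde\Omega}|Du(x,t)|^\theta\,dx,
\]
with constant depending only on $n,\alpha$. Integrating in $t\in T_1$ and repeating the argument with the roles of $T_1$ and $T_2$ exchanged (after moving the averaging in $s$ inside) produces the first term on the right-hand side of the claim.

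For (ii), I would test the Steklov-averaged equation \eqref{11} with a purely spatial function $\varphi(x)$ chosen as a smooth compactly supported approximation of $|\widetilde\Omega|^{-1}\chi_{\widetilde\Omega}$. Concretely, take $\eta\in C_0^\infty(\Omega)$ equal to $1$ on $\widetilde\Omega$ and supported in a comparable thickening; the convexity of $\widetilde\Omega$ combined with the inclusion $\widetilde\Omega\subset B_{\alpha\rho}(y)$ permits the choices $\|\eta\|_{L^\infty}\leq 1$ and $\|D\eta\|_{L^1(\Omega)}\leq c\rho^{n-1}$, which reflect a perimeter bound. Integrating \eqref{11} in time from $s\in T_2$ to $t\in T_1$, passing to the limit $h\to 0$ in the Steklov parameter, normalizing by $|\widetilde\Omega|\sim\rho^n$, and using the structural bounds \eqref{4} together with $\rho\leq 1$ gives
\[
|(u)_{\widetilde\Omega}(t)-(u)_{\widetilde\Omega}(s)|
\leq c\,\rho^{-1}\int_{T_1\cup T_2}\fint_{\widetilde\Omega}(1+|Du|+|F|)^{p(\cdot)-1}\,dz
\]
for a.e.\ $t\in T_1$, $s\in T_2$. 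Raising to the $\theta$-th power, averaging in $s\in T_2$, and combining with step (i) concludes the proof.

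The main obstacle is step (ii): since $u$ is only a very weak solution, one cannot test against $\chi_{\widetilde\Omega}$ directly, so Steklov averaging must be combined with a boundary-layer regularization of $\chi_{\widetilde\Omega}$ whose gradient remains uniformly controlled in $L^1$. The convexity of $\widetilde\Omega$ is indispensable here, as it is precisely what pins down the perimeter bound $c\rho^{n-1}$; the restriction $1\leq\theta\leq\inf p(\cdot)(1-\epsilon)$ ensures, for $\epsilon$ small, that $(1+|Du|+|F|)^{p(\cdot)-1}\in L^1(\widetilde\Omega\times(T_1\cup T_2))$, which legitimates the limiting procedure. These details are carried out in the scalar setting in \cite[Lemma 4.1, Lemma 4.2]{B} and adapted to the present framework in \cite[Lemma 5.1]{BD}.
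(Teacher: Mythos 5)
The paper itself supplies no proof of Lemma \ref{sobolev1}; it only points to \cite[Lemmas 4.1, 4.2]{B} and \cite[Lemma 5.1]{BD}. Your overall strategy---a fixed-time Poincar\'e inequality for the spatial oscillation plus testing the Steklov-averaged equation to control the temporal oscillation of slice averages---is indeed the route those references take, and the decomposition in your first display is correct. The gap is in step (ii), in the choice and use of the cutoff $\eta$.

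You take $\eta\equiv 1$ on $\widetilde\Omega$, supported in a thickening, and rely on the perimeter-type bound $\|D\eta\|_{L^1}\le c\rho^{n-1}$. This does not close the argument, for two reasons. First, $\supp D\eta$ then lies \emph{outside} $\widetilde\Omega$, so the term $\int\langle A,D\eta\rangle\,dx$ involves $A$ on $\Omega\setminus\widetilde\Omega$, which the right-hand side of the lemma (averages over $\widetilde\Omega$ only) does not see. Second, an $L^1$-bound on $D\eta$ cannot produce the needed estimate $\int|\langle A,D\eta\rangle|\,dx\le c\rho^{-1}\int_{\widetilde\Omega}|A|\,dx$: there is no $L^\infty$-control on $A$, and as the transition layer thins, $\|D\eta\|_\infty\to\infty$ and $\int|A|\,|D\eta|\,dx$ degenerates towards an uncontrolled boundary trace. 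What one actually needs is a cutoff $\eta\in C_0^{\infty}(\widetilde\Omega)$, equal to $1$ on a fixed dilation such as $\tfrac12\widetilde\Omega$, with the \emph{pointwise} bound $\|D\eta\|_{L^\infty}\le c(\alpha)\rho^{-1}$; it is precisely the convexity of $\widetilde\Omega$ together with $B_\rho(y)\subset\widetilde\Omega\subset B_{\alpha\rho}(y)$ that makes such a construction possible with a constant depending only on $n$ and $\alpha$. With this corrected choice, testing the equation bounds the \emph{weighted} slice average $u_\eta(t)=\int u(\cdot,t)\eta\,dx\,/\int\eta\,dx$, not $(u)_{\widetilde\Omega}(t)$, so your displayed bound $|(u)_{\widetilde\Omega}(t)-(u)_{\widetilde\Omega}(s)|\le c\rho^{-1}\int_{T_1\cup T_2}\fint_{\widetilde\Omega}(1+|Du|+|F|)^{p(\cdot)-1}dz$ is not what the test produces directly. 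The corrections $|(u)_{\widetilde\Omega}(t)-u_\eta(t)|$ and $|(u)_{\widetilde\Omega}(s)-u_\eta(s)|$ are estimated by the spatial Poincar\'e inequality, and this is in fact where the term $\rho^\theta\fint_{\widetilde\Omega\times T_2}|Du|^\theta dz$ on the right-hand side originates; it does not come from ``repeating the argument with the roles of $T_1$ and $T_2$ exchanged,'' because your initial decomposition only produces a spatial-oscillation term over $T_1$.
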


\begin{lemma}\label{sobolev2}
Let $M\geq1$ be fixed. Then there exists $\rho_0=\rho_0(n,\gamma_1,L,M)$ such that the following holds:  Assume that $u$ is a very weak solution to \eqref{3} with \eqref{4} and deficit $\epsilon>0$ satisfying \eqref{6}. Suppose that on the parabolic cylinder $Q_{\rho,s}(z_0)=B_{\rho}^{(\lambda)}(x_0)\times (t_0-s,t_0+s)$ with $0<\lambda^{(p_0-2)/(2p_0)}\rho\leq\rho_0$, $0<s\leq\rho^2$, $\lambda_1\geq c_E\lambda$ and $Q_{\rho,s}(z_0)\Subset Q^{(6)}$, there holds
\begin{equation}\label{22}\fint_{Q_{\rho,s}(z_0)}\left(|Du|+|F|+1\right)^{p(\cdot)(1-\epsilon)}dz\leq \lambda^{1-\epsilon}.\end{equation}
Then for any $1\leq\theta\leq\inf_{z\in Q_{\rho,s}(z_0)}p(z)(1-\epsilon)$, we have
\begin{equation}\label{23}\fint_{Q_{\rho,s}(z_0)}|u-(u)_{Q_{\rho,s}(z_0)}|^{\theta}dz\leq c\rho^{\theta}\lambda^{\theta/2},
\end{equation}
where the constant $c$ depends only on $n$, $N$, $L$ and $\gamma_1$.
\end{lemma}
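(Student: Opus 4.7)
The plan is to apply Lemma \ref{sobolev1} to the convex set $\widetilde{\Omega}:=B_\rho^{(\lambda)}(x_0)$, which has inradius $\tilde{\rho}:=\lambda^{(p_0-2)/(2p_0)}\rho\leq\rho_0\leq 1$, and to $T_1=T_2:=(t_0-s,\,t_0+s)$. Since $\theta\leq\inf_{Q_{\rho,s}}p(\cdot)(1-\epsilon)$ by hypothesis, the lemma yields
\begin{equation*}
\fint_{Q_{\rho,s}}|u-(u)_{Q_{\rho,s}}|^{\theta}\,dz \leq c\,\tilde{\rho}^{\theta}\fint_{Q_{\rho,s}}|Du|^{\theta}\,dz + c\,\tilde{\rho}^{-\theta}\left(\int_{T_1}\fint_{B_\rho^{(\lambda)}}(1+|Du|+|F|)^{p(\cdot)-1}\,dz\right)^{\theta}.
\end{equation*}
The task thus reduces to showing that each term on the right is bounded by $c\rho^{\theta}\lambda^{\theta/2}$.

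For the first term I would use the energy bound (\ref{22}) via a variable-exponent Jensen argument in the spirit of \cite[Corollary 5.2]{bl}. Setting $G:=(1+|Du|+|F|)^{p(\cdot)(1-\epsilon)}$, the inequality $\theta\leq p(z)(1-\epsilon)$ gives $|Du|^\theta\leq G^{\theta/(p(z)(1-\epsilon))}$ pointwise. Splitting $Q_{\rho,s}$ into the sublevel set $\{G\leq\lambda^{1-\epsilon}\}$, on which $G^{\theta/(p(\cdot)(1-\epsilon))}\leq\lambda^{\theta/p(\cdot)}$, and its complement $\{G>\lambda^{1-\epsilon}\}$, on which $G^{\theta/(p(\cdot)(1-\epsilon))}\leq G\,\lambda^{\theta/p(\cdot)-(1-\epsilon)}$, and then invoking (\ref{22}) yields $\fint_{Q_{\rho,s}}|Du|^\theta\,dz\leq c\,\lambda^{\theta/p_0}$. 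Here the logarithmic continuity (\ref{1}) and the smallness of $\rho_0=\rho_0(n,\gamma_1,L,M)$ are used to replace $\lambda^{\theta/p(z)}$ by $\lambda^{\theta/p_0}$ at the cost of a constant depending on $M$. Combined with $\tilde{\rho}^{\theta}=\rho^{\theta}\lambda^{\theta(p_0-2)/(2p_0)}$, this gives $c\,\tilde{\rho}^\theta\lambda^{\theta/p_0}=c\rho^\theta\lambda^{\theta/2}$, as required.

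For the second term, an entirely analogous variable-exponent splitting applied to the exponent $p(\cdot)-1$ (which satisfies $p(\cdot)-1\leq p(\cdot)(1-\epsilon)$ as soon as $\epsilon_0$ is small enough) yields $\fint_{Q_{\rho,s}}(1+|Du|+|F|)^{p(\cdot)-1}\,dz\leq c\,\lambda^{(p_0-1)/p_0}$. Writing $\int_{T_1}\fint_{B_\rho^{(\lambda)}}\,dz=2s\,\fint_{Q_{\rho,s}}\,dz$, using $s\leq\rho^2$, and combining with $\tilde{\rho}^{-\theta}=\rho^{-\theta}\lambda^{\theta(2-p_0)/(2p_0)}$, the exponents of $\lambda$ collapse as $\tfrac{2-p_0}{2p_0}+\tfrac{p_0-1}{p_0}=\tfrac{1}{2}$, giving once more $c\rho^\theta\lambda^{\theta/2}$. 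Summing the two contributions yields (\ref{23}). The main obstacle is precisely the variable-exponent Jensen step: because $p(\cdot)$ is non-constant, the exponent $\theta/(p(\cdot)(1-\epsilon))$ cannot be pulled outside the integral, and one must instead perform the pointwise super/sub-level split above and exploit (\ref{1}) to absorb the resulting mismatch of the form $\lambda^{\theta(p(z)-p_0)/(p(z)p_0)}$ into the universal constant.
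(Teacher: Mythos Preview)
Your proof is correct and follows essentially the same route the paper indicates (it omits the proof, saying it follows from Lemma~\ref{sobolev1} together with \cite[Corollary~5.2]{bl}): apply Lemma~\ref{sobolev1} with $\widetilde\Omega=B_\rho^{(\lambda)}(x_0)$ and $T_1=T_2$, then control each resulting term via the level-set splitting argument and the intrinsic bound~\eqref{22}. Your exponent bookkeeping for both terms is correct.

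One small correction: you write that replacing $\lambda^{\theta/p(z)}$ by $\lambda^{\theta/p_0}$ comes ``at the cost of a constant depending on $M$''. In fact the constant does \emph{not} depend on $M$; the $M$-dependence is absorbed entirely into the radius restriction $\rho_0=\rho_0(n,\gamma_1,L,M)$, after which the bound $\lambda^{(p_2-p_1)/p_0}\le c(n,\gamma_1,L)$ from~\eqref{9} applies. This is why the lemma can claim $c=c(n,N,L,\gamma_1)$. Note also that \eqref{9} itself rests on the standing lower bound~\eqref{8}, which is in force because the hypothesis $Q_{\rho,s}\Subset Q^{(6)}$ places you in the setting of \S\ref{test function}; you use this implicitly but it is worth making explicit.
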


\section{Construction of the test function}\label{test function}
In this section, we will construct a suitable testing function for the weak form (\ref{5}) of the parabolic system. To this aim we fix a cylinder $Q_\rho^{(\lambda)}(z_o)$ with $0<\rho\le 1$ and $\lambda\ge 1$ and $Q_{32\rho}^{(\lambda)}(z_o)\subset\Omega_T$.
Letting $\rho_1$ and $\rho_2$ be two fixed numbers such that $\rho\leq\rho_1<\rho_2\leq 16\rho$, we set
\begin{align*}
	&Q^{(0)}:=Q_{\rho}^{(\lambda)}(z_0),\quad
	Q^{(1)}:=Q_{\rho_1}^{(\lambda)}(z_0),\quad
	Q^{(2)}:=Q_{\rho_1^+}^{(\lambda)}(z_0),\quad
	Q^{(3)}:=Q_{\rho_2^-}^{(\lambda)}(z_0),\\[5pt]
	&Q^{(4)}:=Q_{\rho_2}^{(\lambda)}(z_0),\quad
	Q^{(5)}:=Q_{16\rho}^{(\lambda)}(z_0),\quad
	Q^{(6)}:=Q_{32\rho}^{(\lambda)}(z_0),
\end{align*}
where $\rho_1^+=\rho_1+\frac{1}{3}(\rho_2-\rho_1)$ and $\rho_2^-=\rho_1+\frac{2}{3}(\rho_2-\rho_1)$. We note that
$Q^{(0)}\subset Q^{(1)}\subset Q^{(2)} \subset Q^{(3)} \subset Q^{(4)}\subset Q^{(5)}\subset Q^{(6)}$.
In the following, we will write $B^{(k)}$ for the projection of $Q^{(k)}$ in $x$ direction and $\Lambda^{(k)}$ for the projection of $Q^{(k)}$ in $t$ direction for $k\in\{0,\cdots,6\}$.
Denoting by $p_1$ and $p_2$ the minimum and maximum of $p(\cdot)$ over the cylinder $Q^{(6)}$, i.e.
$$
	p_1=\inf_{Q^{(6)}}p(\cdot)\quad\mbox{and}\quad p_2=\sup_{Q^{(6)}}p(\cdot),
$$
and taking into account that $p(\cdot)\leq 2$ and that $\omega$ is concave, we find that
\begin{equation}\label{p2-p1}
	p_2-p_1
	\leq
	\omega\left(\max\left\{64\lambda^{(p_0-2)/(2p_0)}\rho,\sqrt{(64\rho)^2}\right\}\right)
	\leq
	\omega(64\rho)
	\le
	64\omega(\rho).
\end{equation}
This shows that the oscillation of $p(z)$ on the parabolic cylinder can be determined by the length of the time interval.
Therefore, by \eqref{p2-p1}, the concavity of $\omega$ and assumption \eqref{1}, we have that
\begin{align}\label{rho-bound}
	\rho^{-(p_2-p_1)}
	\le
	\rho^{-64\omega(\rho)}
	=
	\exp\big[64\omega(\rho)\log\tfrac1\rho\big]
	\le
	e^{64L}.
\end{align}
This proves that the quantity $\rho^{-(p_2-p_1)}$ can be bounded by a universal constant. Next, we are looking for a similar result for $\lambda$.
Throughout this paper, we shall assume that
\begin{equation}\label{8}
	\lambda^{1-\varepsilon}
	\le
	\fint_{Q^{(0)}}\left(|Du|+|F|+1\right)^{p(\cdot)(1-\varepsilon)}dz
\end{equation}
holds true.
Then, writing $p_0=p(z_0)$ as usual and using the fact that $|Q^{(0)}|=c(n)\rho^{2+n}\lambda^{n(p_0-2)/(2p_0)}$ and assumption \eqref{6}, we see from \eqref{8} that
$$\lambda^{\frac{(\gamma_1-2)n}{2\gamma_1}+1-\epsilon}\leq\lambda^{\frac{(p_0-2)n}{2p_0}+1-\epsilon}\leq\frac{c}{\rho^{n+2}}\int_{Q}
\left(|Du|+|F|+1\right)^{p(z)(1-\varepsilon)}dz\leq\frac{cM}
{\rho^{n+2}},$$
since $\lambda\ge1$ and $\gamma_1> \frac{2n}{n+2}$. This leads to the following upper bound for $\lambda$:
\begin{equation}\label{bound-lambda}
	\lambda\leq\left(cM\rho^{-n-2}\right)^{\frac{4\gamma_1}{(\gamma_1-2)n+2\gamma_1}},
\end{equation}
provided $\epsilon<\frac{(\gamma_1-2)n+2\gamma_1}{4\gamma_1}$.
This together with \eqref{p2-p1} and \eqref{rho-bound} implies that for $\rho>0$ with $\rho\leq\rho_0\leq \frac{1}{64M}$, there holds
\begin{equation}\label{9}
	\lambda^{(p_2-p_1)/p_0}
	\leq
	c\, \left[\rho^{-(n+2)(p_2-p_1)} M^{p_2-p_1}\right]^{\frac{4\gamma_1}{(\gamma_1-2)n+2\gamma_1}}
	\le
	c
\end{equation}
where the constant $c$ depends on $\gamma_1$, $n$ and $L$.

On the other hand, we can restrict the radius of the parabolic cylinder so small that $p_2-1$ can be bounded by $\tfrac{p_1}{\qtilde}$ for some constant $\qtilde>1$. This can be achieved by the following argument.
We fix a constant $\qtilde$ with $1<\qtilde<2$ and choose $\rho_0$ so small that $\omega(4\rho_0)\le \frac{2}{\qtilde}-1$. Then, we have for $\rho\in(0,\rho_0]$ that
\begin{equation}\label{10}
	p_2-1
	\le
	p_1+\omega(4\rho)-1
	\le
	p_1+\omega(4\rho_0)-1
	\le
	\tfrac{p_1}{\qtilde}.
\end{equation}
The estimates \eqref{rho-bound}, \eqref{9} and \eqref{10} will be frequently used throughout the paper.

Since the solution itself is not an admissible testing function, the test function will be constructed as an Whitney extension of the solution from a good set to the whole domain $\Omega_T$.
We proceed to construct this good set $E(\lambda_1)$ as follows: For $\lambda_1\ge 1$ we denote the lower level set of the maximal function
$$
	M_{Q^{(4)}}(z)
	:=
	M\left[\left(\frac{|u-u_{Q^{(1)}}|}{\lambda^{(p_0-2)/(2p_0)}\rho}+|Du|+|F|+1\right)^{p(\cdot)/\qtilde}\chi_{Q^{(4)}}
\right](z)^{\qtilde(1-\varepsilon)}
$$
by
$$
	E(\lambda_1):=\big\{z\in Q^{(4)}:\ M_{Q^{(4)}}(z)\leq\lambda_1^{1-\varepsilon}\big\}.
$$
If $E(\lambda_1)=\emptyset$,
we have by the $L^{\qtilde(1-\epsilon)}$-boundedness of the strong maximal function that
\begin{align*}
	\lambda_1^{1-\epsilon}|Q^{(4)}|
	&\leq
	\int_{Q^{(4)}}
	M_{Q^{(4)}}(z)dz
	\leq
	c\int_{Q^{(4)}}\left(\frac{|u-u_{Q^{(1)}}|}{\lambda^{(p_0-2)/(2p_0)}\rho}+|Du|+|F|+1\right)^{p(\cdot)(1-\varepsilon)}dz\\
	&\leq
	c_E^{1-\epsilon}\,\widetilde{\lambda}^{{1-\varepsilon}}|Q^{(4)}|,
\end{align*}
where
\begin{equation}\label{lambdatilde}
	\widetildelambda
	:=
	\lambda+
	\left(\fint_{Q^{(4)}}\left(\frac{|u-u_{Q^{(1)}}|}{\lambda^{(p_0-2)/(2p_0)}\rho}\right)^{p(\cdot)(1-\varepsilon)}dz\right)
^{\frac{1}{1-\varepsilon}}
\end{equation}
and $c_E$ is a constant depend on $n$ and $\gamma_1$.
For $\lambda_1>c_E\widetildelambda$, this leads to a contradiction. Therefore, we conclude that $E(\lambda_1)$ is nonempty for $\lambda_1\ge c_E\widetildelambda$.
We also note that the set $E(\lambda_1)$ is bounded and closed. Therefore, for any fixed point $z\in Q^{(4)}\setminus E(\lambda_1)$, there exists a neighbourhood $\widetilde{Q}$ such that $\widetilde{Q}\subset Q^{(4)}\setminus E(\lambda_1)$. This motivates us to establish the following Lemma.
\begin{lemma}\label{lem-lambda1}
Let $\lambda\ge 1$, $\lambda_1\ge c_E\lambda$ and $\alpha\ge 1$, $z\in Q^{(4)}\setminus E(\lambda_1)$ and define
$$
	r_z:=d_{z}(z,E(\lambda_1))
	\quad\mbox{where}\quad
	d_{z}(z_1,z_2):=\max\Big\{\ \lambda_1^{\frac{2-p(z)}{2p(z)}}|x_1-x_2|,\sqrt{|t_1-t_2|}\ \Big\}.
$$
Then for any $z_1,z_2\in Q^{(4)}\cap \alpha Q_{r_z}^{(\lambda_1)}(z)$ we have
$\lambda_1^{|p(z_1)-p(z_2)|}
	\le
	c$ with the constant $c$ depends on $\gamma_1$, $n$, $\alpha$ and $L$.
\end{lemma}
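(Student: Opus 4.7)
The plan is to combine the bound $d_P(z_1,z_2)\le 2\alpha r_z$ with a decay estimate of the form $r_z\le c\,\lambda_1^{-\kappa}$, and then close the argument by invoking the weak logarithmic continuity \eqref{1}. The distance bound is immediate: since $\lambda_1\ge 1$ and $p(z)\le 2$ imply $\lambda_1^{(p(z)-2)/(2p(z))}\le 1$, both the spatial semidiameter and the square root of the temporal semidiameter of $\alpha Q_{r_z}^{(\lambda_1)}(z)$ are bounded by $\alpha r_z$. Therefore $d_P(z_1,z_2)\le 2\alpha r_z$, and by \eqref{2}, $|p(z_1)-p(z_2)|\le\omega(2\alpha r_z)$, so it suffices to control $\lambda_1^{\omega(2\alpha r_z)}$.

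The core step is to establish $r_z\le c\,\lambda_1^{-\kappa}$ for some $\kappa>0$ depending only on $n,\gamma_1$ via a measure estimate for $Q^{(4)}\setminus E(\lambda_1)$. For $\varepsilon$ small enough, $\tilde q(1-\varepsilon)>1$, so the strong maximal function is $L^{\tilde q(1-\varepsilon)}$-bounded; applied to $g^{p(\cdot)/\tilde q}\chi_{Q^{(4)}}$ with $g$ the integrand appearing in $M_{Q^{(4)}}$, Chebyshev together with \eqref{6} yields
\[
|Q^{(4)}\setminus E(\lambda_1)|\le c\,\lambda_1^{-(1-\varepsilon)}.
\]
Since $Q_{r_z}^{(\lambda_1)}(z)\cap E(\lambda_1)=\emptyset$ by definition of $r_z$, we have $Q_{r_z}^{(\lambda_1)}(z)\cap Q^{(4)}\subseteq Q^{(4)}\setminus E(\lambda_1)$, and as $z\in Q^{(4)}$ a product fat-cone estimate applied to the spatial ball and the time interval separately gives
\[
|Q_{r_z}^{(\lambda_1)}(z)\cap Q^{(4)}|\ge c(n)\min\{|B_{r_z}^{(\lambda_1)}(x)|,|B^{(4)}|\}\cdot\min\{|\Lambda_{r_z}(z)|,|\Lambda^{(4)}|\}.
\]
The regime in which both minima are realized by the $Q^{(4)}$-factors would force $Q^{(4)}\subseteq Q_{r_z}^{(\lambda_1)}(z)$ and hence $E(\lambda_1)=\emptyset$, which is excluded. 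In each of the three remaining regimes (both $r_z$-factors smaller, time excess, spatial excess) one combines the intersection lower bound with the measure upper bound and, where needed, substitutes the corresponding excess inequality ($r_z>\rho_2$ or $r_z\lambda_1^{(p(z)-2)/(2p(z))}>\lambda^{(p_0-2)/(2p_0)}\rho_2$) to eliminate $\rho_2$, arriving at
\[
r_z^{n+2}\le c\,\lambda_1^{n(2-p(z))/(2p(z))-(1-\varepsilon)}.
\]
The subquadratic bound $p(z)>2n/(n+2)$ gives $n(2-p(z))/(2p(z))<1$, so the exponent is $\le-\kappa$ for some $\kappa=\kappa(n,\gamma_1)>0$ and $\varepsilon$ sufficiently small, yielding $r_z\le c\,\lambda_1^{-\kappa/(n+2)}$.

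With this decay, for $\lambda_1$ above a threshold depending only on $n,\gamma_1,L,\alpha$ one has $2\alpha r_z\le 1/2$, and by \eqref{1}
\[
\omega(2\alpha r_z)\log\lambda_1\le\frac{L\log\lambda_1}{\log(1/(2\alpha r_z))}\le\frac{L\log\lambda_1}{(\kappa/(n+2))\log\lambda_1-\log(2\alpha c)}\le c(n,\gamma_1,L,\alpha),
\]
so exponentiating yields $\lambda_1^{|p(z_1)-p(z_2)|}\le c$. For $\lambda_1$ below the threshold the conclusion is trivial since $\omega\le 1$ and $\lambda_1$ is then bounded by a constant. The principal obstacle will be the fat-cone case analysis: because $Q_{r_z}^{(\lambda_1)}(z)$ and $Q^{(4)}$ carry different intrinsic scalings (through $\lambda_1$ versus $\lambda$), their aspect ratios can genuinely mismatch, and each excess regime must be reduced to the uniform decay bound on $r_z$ by careful substitution, where the subquadratic condition $\gamma_1>2n/(n+2)$ is essential in keeping the final $\lambda_1$-exponent strictly negative.
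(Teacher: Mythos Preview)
Your argument has a genuine gap: the constant $c$ in your measure bound
\[
|Q^{(4)}\setminus E(\lambda_1)|\le c\,\lambda_1^{-(1-\varepsilon)}
\]
is not a structural constant. What Chebyshev and the maximal-function bound actually give is
\[
|Q^{(4)}\setminus E(\lambda_1)|\le \frac{c}{\lambda_1^{1-\varepsilon}}\,\mathcal A,
\qquad
\mathcal A:=\int_{Q^{(4)}}\Big(\tfrac{|u-u_{Q^{(1)}}|}{\lambda^{(p_0-2)/(2p_0)}\rho}+|Du|+|F|+1\Big)^{p(\cdot)(1-\varepsilon)}dz,
\]
and $\mathcal A$ depends on $\rho$ and $M$ (roughly $\mathcal A\lesssim \rho^{-2}M^2$; note the integrand carries a factor $\rho^{-1}$ on $|u-u_{Q^{(1)}}|$, so \eqref{6} alone does not make $\mathcal A$ a structural constant). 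This $(\rho,M)$-dependence then enters your decay $r_z\le c\,\lambda_1^{-\kappa/(n+2)}$ and the threshold ``$\lambda_1$ above which $2\alpha r_z\le 1/2$'', so both branches of your final dichotomy produce a bound depending on $\rho$ and $M$, contradicting the claim that $c=c(n,\gamma_1,\alpha,L)$.

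There is also a problem with the fat-cone case analysis. In the ``excess'' regimes the substitution you describe goes the wrong way: e.g.\ in the spatial-excess case the intersection lower bound is $\ge c\,|B^{(4)}|\,r_z^2$, while the excess says $|B_{r_z}^{(\lambda_1)}|>|B^{(4)}|$, which cannot be used to replace $|B^{(4)}|$ by $\lambda_1^{n(p(z)-2)/(2p(z))}r_z^n$ on the \emph{lower} side. What actually comes out is
\[
\lambda_1^{\frac{n(\gamma_1-2)}{2\gamma_1}}\min\{r_z,\rho\}^{n+2}\le c\,\mathcal A\,\lambda_1^{-(1-\varepsilon)},
\]
not a bound on $r_z^{n+2}$. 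The paper closes exactly this gap by also using $z_1,z_2\in Q^{(4)}$ to get $|p(z_1)-p(z_2)|\le c(\alpha)\,\omega(\min\{r_z,\rho\})$, then raising both factors of the resulting bound $\lambda_1\le (c\mathcal A)^{\beta}\min\{r_z,\rho\}^{-(n+2)\beta}$ to this small power: the $\min\{r_z,\rho\}$-factor is controlled directly by \eqref{1}, while $\mathcal A^{\,\omega(\min\{r_z,\rho\})}\le \mathcal A^{\,c\omega(\rho)}$ is bounded via $\rho^{-(p_2-p_1)}\le c$ and $M^{p_2-p_1}\le c$ (see \eqref{rho-bound}, \eqref{9} and the standing smallness assumption $\rho\le\rho_0\le\tfrac{1}{64M}$). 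This is precisely the missing idea that eliminates the $(\rho,M)$-dependence.
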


\begin{proof}
We first observe that since $p(\cdot)\le 2$ and $\lambda,\lambda_1\ge 1$ we have
\begin{align*}
	|p(z_2)-p(z_1)|
	\le
	\omega\big(\min\{2\alpha r_z,32\rho\}\big)
	\leq
	32\max\{\alpha,1\}\,\omega\big(\min\{r_z,\rho\}\big),
\end{align*}
where we also used the concavity of $\omega$.
Since $Q^{(4)}\cap Q^{(\lambda_1)}_{r_z}(z)\subset Q^{(4)}\backslash E(\lambda_1)$, we use Chebyshev inequality and the boundedness of the strong maximal functions to obtain
\[\begin{split}
	|Q^{(4)}\cap Q^{(\lambda_1)}_{r_z}(z)|
	&\leq
	|Q^{(4)}\backslash E(\lambda_1)|
	\leq
	\frac{1}{\lambda_1^{1-\varepsilon}}\int_{Q^{(4)}}M_{Q^{(4)}}(z) \,dz\\
	&\leq
	\frac{c}{\lambda_1^{1-\varepsilon}}\int_{Q^{(4)}}
	\left(\frac{|u-u_{Q^{(1)}}|}{\lambda^{(2-p_0)/(2p_0)}\rho}+|Du|+|F|+1\right)^{p(\cdot)(1-\varepsilon)}dz
	=:
	\frac{c\, \mathcal A}{\lambda_1^{1-\varepsilon}},
\end{split}\]
with the obvious meaning of $\mathcal A$.
This implies the following upper bound for $\lambda_1$: \begin{equation}\label{16}\lambda_1^{1-\varepsilon}\leq \frac{c\,\mathcal A}{|Q^{(4)}\cap {Q}_{r_z}^{(\lambda_1)}(z)|}.\end{equation}
To estimate the lower bound for $|Q^{(4)}\cap {Q}_{r_z}^{(\lambda_1)}(z)|$, we recall that $z\in Q^{(4)}\setminus E(\lambda_1)$ implies
\[\begin{split}
	|Q^{(4)}\cap {Q}_{r_z}^{(\lambda_1)}(z)|
	\ge
	c\,\min\{\lambda_1^{\frac{n(p(z)-2)}{2p(z)}}r_z^n,\lambda^{\frac{n(p_0-2)}{2p_0}}\rho^n\}
	\min\Big\{r_z^2,\rho^2\Big\}.
\end{split}\]
Notice that $\lambda^{\frac{n(p(z)-2)}{2p(z)}}=\lambda^{\frac{n(p_0-2)}{2p_0}}\lambda^{\frac{p(z)-p_0}{p_0p(z)}}\leq c\lambda^{\frac{n(p_0-2)}{2p_0}}$, $\gamma_1\leq p(z)\leq 2$ and $\lambda_1\ge c_E\lambda$ we obtain
$$|Q^{(4)}\cap {Q}_{r_z}^{(\lambda_1)}(z)|\ge
	c\,\lambda_1^{\frac{n(\gamma_1-2)}{2\gamma_1}}\min\{r_z^{n+2},\rho^{n+2}\}.$$
Together with (\ref{16}) this shows that
\begin{equation}\label{lambda1-pre}
	\lambda_1\leq\left[c\, \mathcal A\, \max\left\{\frac{1}{r_z^{n+2}},\frac{1}{\rho^{n+2}}\right\}\right]^{\frac{2\gamma_1+n\gamma_1-2n}{4\gamma_1}}.
\end{equation}
Next, we estimate $\mathcal A$ as follows:
\[\begin{split}
	\mathcal A
	&\leq
	c\, \rho^{-2}\lambda^{2(p_0-2)/(2p_0)}
	\left[M +|Q^{(1)}|^{-\frac{p_2-p_1}{p_1}}M^{\frac{p_2}{p_1}}+|Q^{(1)}|\right]
	\\&\leq
	c\, \rho^{-2}\lambda^{2(p_0-2)/(2p_0)} M^2,
\end{split}
\]
where we have used the facts that $\lambda^{p_2-p_1}\leq c$, $|Q^{(1)}|\le c$ and $|Q^{(1)}|^{-\frac{p_2-p_1}{p_1}}\le c$ which follows from \eqref{rho-bound} and \eqref{9}.
Inserting the bound for $\mathcal A$ into \eqref{lambda1-pre} and taking into account that $\rho^{-(p_2-p_1)}\le e^{4L}$ and $M^{p_2-p_1}\leq e^L$ (see \eqref{rho-bound} and \eqref{9}) we end up with
$\lambda_1^{|p(z_1)-p(z_2)|}
	\le
	c(n,\gamma_1,\alpha,L).$
This finishes the proof of the lemma.
\end{proof}
To construct our test function, we need the following version of the Whitney decomposition theorem for non-uniformly parabolic cylinders.
\begin{lemma}\label{Whitney}
There exist Whitney-type cylinders $\{Q_i\}_{i=1}^{\infty}$ with $Q_i\equiv B_i\times\Lambda_i:= Q^{(\lambda_1)}_{r_i}(z_i)$ and $z_i\in Q^{(4)}\backslash E(\lambda_1)$,
having the following properties:
\begin{itemize}
\item[(i)]
$Q^{(4)}\backslash E(\lambda_1)=\bigcup_{i=1}^{\infty}Q^{(4)}\cap Q_i,$

\item[(ii)]
In each point of $Q^{(4)}\backslash E(\lambda_1)$ intersect at most $c(n,L,\gamma_1)$ of the cylinders $2Q_i$.

\item[(iii)]
There exists a constant $c_w=c_w(n,L,\gamma_1)$ such that for all $i\in \mathbb{N}$ there holds
$$
	c_wQ_i\subset \mathbb{R}^{n+1}\backslash E(\lambda_1)
	\ \ \ \ \ \text{and}\ \ \ \ \ \
	2c_wQ_i\cap E(\lambda_1)\neq\emptyset,
$$

\item[(iv)]
There exists a constant $c=c(n,L,\gamma_1)$ such that for any Whitney cylinders $Q_i$ and $Q_j$ with $2Q_i\cap 2Q_j\neq\emptyset$, there holds
$$
	|B_i|
	\leq
	c\,|B_j|
	\leq
	c\,|B_i|
	\quad\mbox{and}\quad
	|\Lambda_i|
	\leq
	c\,|\Lambda_j|
	\leq
	c\,|\Lambda_i|.
$$

\item[(v)]
There exists a constant $\hat c=\hat c(n,L,\gamma_1)>4$ such that $2Q_i\cap 2Q_j\neq\emptyset$ implies $2Q_i\subset \hat c Q_j$.
\end{itemize}\end{lemma}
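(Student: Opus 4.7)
The plan is to mimic the parabolic Whitney construction of B\"ogelein--Duzaar but carried out in the point-dependent intrinsic metric $d_z$ from Lemma \ref{lem-lambda1}. For every $z\in Q^{(4)}\setminus E(\lambda_1)$, I would first set $r_z:=d_z(z,E(\lambda_1))$, which is strictly positive because $E(\lambda_1)$ is closed and bounded. A small universal parameter $\kappa\in(0,1)$, to be fixed at the end depending only on $n,\gamma_1,L$, allows one to consider the seed cylinder $\tilde Q_z:=Q_{\kappa r_z}^{(\lambda_1)}(z)$. By Lemma \ref{lem-lambda1} applied with $\alpha$ a universal multiple of $\kappa$, all points $z'\in\tilde Q_z$ satisfy $\lambda_1^{|p(z')-p(z)|}\le c$, so the two metrics $d_z$ and $d_{z'}$ are comparable with universal constants; consequently, for $\kappa$ small enough, $\tilde Q_z$ sits inside the $d_z$-ball of radius $r_z/2$ around $z$, hence inside $\mathbb{R}^{n+1}\setminus E(\lambda_1)$.

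Next I would run a Vitali-type selection on the family $\{\tilde Q_z\}_{z\in Q^{(4)}\setminus E(\lambda_1)}$. In the standard way, one sorts the seeds into generations according to the size of $r_z$ (say $2^{-k}\le r_z<2^{-k+1}$) and greedily picks, in each generation from largest to smallest, a maximal pairwise disjoint subfamily of cylinders $\tfrac{1}{5}\tilde Q_z$. Setting $Q_i:=Q_{r_i}^{(\lambda_1)}(z_i)$ with $r_i$ a fixed fraction of $r_{z_i}$ yields a countable family for which the $5$-dilations of the original seeds still cover $Q^{(4)}\setminus E(\lambda_1)$, giving property (i). Property (iii) is built in: by the choice of $\kappa$, the cylinder $c_wQ_i$ with $c_w$ a suitable universal constant still lies in $\mathbb{R}^{n+1}\setminus E(\lambda_1)$, while by maximality of $r_{z_i}$ a slightly larger dilation $2c_wQ_i$ necessarily meets $E(\lambda_1)$; here one again invokes Lemma \ref{lem-lambda1} to pass between the $d_{z_i}$-metric and the concrete cylinder geometry.

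Properties (iv) and (v) reduce to the comparability of $r_i$ and $r_j$ whenever $2Q_i\cap 2Q_j\neq\emptyset$. Picking any point in the intersection, the $d_{z_i}$-triangle inequality gives $d_{z_i}(z_j,E(\lambda_1))$ comparable to $r_i$, and using Lemma \ref{lem-lambda1} one can switch to the $d_{z_j}$-distance without losing more than a universal factor; this yields $r_i\simeq r_j$. The containment $2Q_i\subset\hat c Q_j$ then follows by another application of the triangle inequality in $d_{z_j}$, for a universal $\hat c>4$. The comparability of the spatial radii $|B_i|\simeq|B_j|$ requires an extra step: the factor $\lambda_1^{(p(z_i)-2)/(2p(z_i))}$ relating $r_i$ to the spatial radius of $B_i$ differs from the analogue for $B_j$ by a universal constant, which is precisely the content of Lemma \ref{lem-lambda1}. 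Property (ii) is then the standard consequence of Vitali: for fixed $z$, every $2Q_i$ containing $z$ satisfies $r_i\simeq r_z$ and $z_i\in 2Q_i\ni z$, so all such $z_i$ lie in a $d_z$-ball of size $\sim r_z$; disjointness of the $\tfrac15\tilde Q_{z_i}$ then caps the number of such indices by a constant depending only on $n,\gamma_1,L$.

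The main obstacle is the non-uniformity of the cylinder geometry: both the shape of $Q_r^{(\lambda_1)}(z)$ and the metric $d_z$ depend on the exponent $p(\cdot)$ at the center, so standard Whitney/Vitali arguments for a fixed metric do not literally apply. The essential trick is to keep every comparison local enough that Lemma \ref{lem-lambda1} is applicable, which in turn forces $\kappa$ to be small and $c_w,\hat c$ to be large in a universal way. Once those constants are chosen consistently, the entire construction proceeds along the lines of \cite[\S 5]{BD} and \cite[\S 6]{bl}, with the metric $d_z$ replacing the intrinsic distance from the superquadratic case.
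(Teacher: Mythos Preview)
Your proposal is correct and follows exactly the approach indicated in the paper: the authors simply state that, given Lemma~\ref{lem-lambda1}, the proof proceeds as in \cite[Lemma~4.2]{bl} and omit the details. Your outline is precisely a fleshed-out version of that argument, using Lemma~\ref{lem-lambda1} at each step where the non-uniform intrinsic geometry $d_z$ has to be compared across neighbouring centers.
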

From Lemma \ref{lem-lambda1}, the proof of Lemma \ref{Whitney} follows with
the same arguments as \cite[Lemma 4.2]{bl} and the details are left to the reader.
Subordinate to the cylinders $Q_i$, we can construct a partition of unity as stated in the following lemma.
\begin{lemma}\label{unity partition}\cite[Lemma 4.3]{bl} There exists a partition of unity $\{\psi_i\}_{i=1}^{\infty}$ on $\mathbb{R}^{n+1}\backslash E(\lambda_1)$, i.e. $\sum_{i=1}^{\infty}\psi_i\equiv1$ on $\mathbb{R}^{n+1}\backslash E(\lambda_1)$ having following properties,
\begin{equation*}
	\begin{cases}
	\psi_i\in C_0^{\infty}(2Q_i),\ \ \ \ \ 0\leq\psi_i\leq1,\ \ \ \text{and}\ \ \ \ \psi_i\geq c\ \ \ \text{on $Q_i$,}\\[5pt]
	|\partial_t\psi_i|\leq cr_i^{-2},\ \ |D\psi_i|\leq c\lambda_1^{(2-p_0^i)/(2p_0^i)}r_i^{-1},
	\end{cases}
\end{equation*}
where the constant $c$ only depends on $n,L$ and $\gamma_1$.
\end{lemma}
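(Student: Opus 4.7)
The plan is to follow the standard Whitney-type construction, the only subtlety being that the cylinders $Q_i$ are non-uniformly scaled in the spatial direction (with scaling factor $\lambda_1^{(p_0^i-2)/(2p_0^i)}$ depending on the centre $z_i$), so that derivative bounds involve exponents that vary from cylinder to cylinder. The key input that allows this variation to be absorbed into a constant is Lemma~\ref{lem-lambda1}, which controls $\lambda_1^{|p(z_1)-p(z_2)|}$ by a universal constant on bounded dilations of the Whitney cylinders.

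First, I would produce elementary bump functions. For each $i$, write $p_0^i:=p(z_i)$ and choose $\varphi_i\in C_0^\infty(2Q_i)$ as a tensor product $\varphi_i(x,t)=\eta_i(x)\zeta_i(t)$, where $\eta_i$ is a standard radial cutoff on the spatial ellipsoid $B_i=B_{r_i}^{(\lambda_1)}(x_i)$, identically $1$ on $B_i$, supported in $2B_i$, with $|D\eta_i|\le c\,\lambda_1^{(2-p_0^i)/(2p_0^i)}r_i^{-1}$, and $\zeta_i$ a time cutoff supported in $2\Lambda_i$, equal to $1$ on $\Lambda_i$, with $|\zeta_i'|\le c\,r_i^{-2}$. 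These clearly satisfy the required individual bounds.

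Second, I would normalise. Property (ii) of Lemma~\ref{Whitney} implies that at every point of $\mathbb R^{n+1}\setminus E(\lambda_1)$ at most $c(n,L,\gamma_1)$ of the doubled cylinders $2Q_i$ are active, so the sum $\Phi(z):=\sum_i\varphi_i(z)$ is locally finite and smooth on $\mathbb R^{n+1}\setminus E(\lambda_1)$. Since property (i) covers each such point by some $Q_i$ on which $\varphi_i\ge c>0$, we also have $\Phi\ge c$ there. Setting $\psi_i:=\varphi_i/\Phi$ gives a partition of unity with $\psi_i\in C_0^\infty(2Q_i)$, $0\le\psi_i\le 1$, and $\psi_i\ge c$ on $Q_i$.

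Finally, the derivative estimates follow from the quotient rule together with the Whitney compatibility. For any index $j$ with $2Q_j\cap 2Q_i\neq\emptyset$, property (iv) gives $r_j\simeq r_i$, and property (v) combined with Lemma~\ref{lem-lambda1} applied on $\hat c Q_j$ yields $\lambda_1^{|p_0^j-p_0^i|}\le c$, so that
\[
	|D\varphi_j|
	\le
	c\,\lambda_1^{(2-p_0^j)/(2p_0^j)}r_j^{-1}
	\le
	c\,\lambda_1^{(2-p_0^i)/(2p_0^i)}r_i^{-1},
	\quad
	|\partial_t\varphi_j|\le c\,r_i^{-2}.
\]
Combining these with $|D\psi_i|\le |D\varphi_i|/\Phi+\varphi_i\sum_j|D\varphi_j|/\Phi^2$ and the analogous expression for $\partial_t\psi_i$, and using the uniform bound on the number of overlapping indices, one obtains the asserted bounds. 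The only real obstacle is precisely this transfer of derivative bounds between overlapping cylinders with different intrinsic scalings $p_0^j$, which is settled by Lemma~\ref{lem-lambda1}; everything else is a direct transcription of the classical Whitney partition of unity to the non-uniformly parabolic setting, exactly as in \cite[Lemma 4.3]{bl}.
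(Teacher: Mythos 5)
Your reconstruction is the standard Whitney partition-of-unity argument (bump functions on $2Q_i$, sum, normalize) and it correctly isolates the one genuine subtlety of the non-uniform parabolic setting: that transferring derivative bounds between overlapping cylinders with different intrinsic scalings $\lambda_1^{(p_0^j-2)/(2p_0^j)}$ requires Lemma~\ref{lem-lambda1} applied to $\hat c\,Q_j$ together with Lemma~\ref{Whitney}\,(iv)--(v). This is precisely the route taken in the cited reference \cite[Lemma 4.3]{bl}, which the present paper defers to, so the argument is correct and in agreement.
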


We now use the Lipschitz truncation method to construct the test function as desired.
For $i\in\mathbb{N}$, we define $I(i):=\{j\in\N:\supp\psi_j\cap\supp\psi_i\neq\emptyset\}$ and by $\#I(i)$ we denote the number of elements in $I(i)$. From Lemma \ref{Whitney} (ii), we know that $\#I(i)\leq c(n,L,\gamma_1)$ for any $i\in\mathbb{N}$.
Furthermore, for $i\in\N$ we define the enlarged cylinder
$\widehat{Q_i}
	:=
	\hat{c}Q_i
	\equiv
	Q_{\hat r_i}^{(\lambda_1)}(z_i),$
where $\hat{r}_i:=\hat c r_i$ and $\hat c=\hat c(n,L,\gamma_1)$ denotes the constant from Lemma \ref{Whitney} (v). Then, by Lemma \ref{Whitney} (v) we see that $\bigcup_{j\in I(i)}\mathrm{supp}\ \psi_j\subset\bigcup_{j\in I(i)}2Q_j\subset\widehat{Q_i}$.
We now define the function
$v(z)\equiv v(x,t)
	:=
	\eta(x)\zeta(t)(u-u_{Q^{(1)}}),$
where $\eta\in C_0^{\infty}(B^{(3)})$, $\zeta\in C_0^{\infty}(\Lambda^{(3)})$ are cutoff functions satisfying
\begin{equation}
	\begin{cases}\label{etazeta}
	\eta\equiv1\ \text{in}\ B^{(2)}, & 0\leq\eta\leq1,\ |D\eta|\leq c\lambda^{(2-p_0)/(2p_0)}(\rho_2-\rho_1)^{-1}\\[5pt]
	\zeta\equiv1\ \text{in}\ \Lambda^{(2)},& 0\leq\zeta\leq1,\ |\partial_t\zeta|\leq c(\rho_2^2-\rho_1^2)^{-1}.
	\end{cases}
\end{equation}
It follows that $\supp (\eta\zeta)\subset Q^{(3)}$. Then, for $Q_i$ and $\psi_i$ as in Lemmas \ref{Whitney} and \ref{unity partition} we define the test function
\begin{equation}\label{test-function}
	\widetilde {v}(z)\equiv \widetilde {v}(x,t)
	:=
	\begin{cases}
	v(z), &\mbox{for $z\in E(\lambda_1)$,}\\[5pt]
	\displaystyle\sum_{i=1}^\infty v_{Q_i\cap Q^{(4)}}\psi_i(z),\quad &\mbox{for $z\in \R^{n+1}\setminus E(\lambda_1)$.}
	\end{cases}
\end{equation}
Note that $v_{Q_i\cap Q^{(4)}}\neq0$ implies that $Q_i\cap Q^{(3)}\neq\emptyset$ and consequently $\mathrm{supp}\,\psi_i\cap Q^{(3)}\neq\emptyset$. For this reason we are mainly interested in getting estimates on such cylinders and we have to introduce some more notation. We set
\begin{equation*}
	S_1
	:=
	\left\{t\in \mathbb{R}^1:
	|t-t_0|\leq \left(\rho_1+\tfrac{1}{9}(\rho_2-\rho_1)\right)^2 \right\}
\end{equation*}
and
\begin{equation*}
	S_2
	:=
	\left\{t\in \mathbb{R}^1:
	|t-t_0|\leq \left(\rho_1+\tfrac{2}{9}(\rho_2-\rho_1)\right)^2
\right\}.
\end{equation*}
and note that $\Lambda^{(1)}\subset S_1\subset S_2\subset \Lambda^{(2)}$.
Furthermore, we need to consider the set
$$
	\Theta
	:=
	\big\{i\in\mathbb{N}:\mathrm{supp}\,\psi_i\cap S_1\neq\emptyset\big\}
$$
and we decompose the set $\Theta$ as follows:
$$
	\Theta_1
	:=
	\big\{i\in\Theta:\widehat{Q_i}\subset\mathbb{R}^n\times S_2\big\}
	\ \ \ \ \text{and}\ \ \ \ \
	\Theta_2\equiv\Theta\backslash\Theta_1.
$$
We find that if $i\in\Theta_1$ and $9\lambda_1^{(p_0^i-2)/(2p_0^i)}r_i\leq\lambda^{(p_0-2)/(2p_0)}(\rho_2-\rho_1)$ then $Q_i\subset Q^{(4)}$. While if $i\in\Theta_2$ then there holds $r_i^2\geq \hat c^{-2}s$, where $s=(\rho_2-\rho_1)\rho$.

\section{Caccioppoli type inequality}\label{Caccioppoli inequality}
In this  section, we will prove the Caccioppoli inequality for very weak solutions to \eqref{3}.
The proof of the Caccioppoli inequality will be one of the main difficulties in proving the higher integrability for very weak solutions. Since the solution multiplied by a cut-off function cannot be used as a testing function, we will use the function $\widetilde{v}$, constructed in \eqref{test-function} instead.

To simplify the notation, we denote
$$
\mu\equiv \mu(\rho,\rho_1,\rho_2):=\left(\frac{\rho}{\rho_2-\rho_1}\right)^\beta
$$
for some constant $\beta$ that only depends on $n,N$ and $L$. The precise value of $\beta$ may change from line to line.
For any fixed Whitney cylinder $Q_i=Q^{(\lambda_1)}_{r_i}(z_i)$, we write $p_0^i=p(z_i)$. For abbreviation, we write
$$\rho_\lambda=\lambda^{(p_0-2)/(2p_0)}\rho\quad\text{and}\quad r_{i,\lambda_1}=
\lambda_1^{(p_0^i-2)/(2p_0^i)}r_i.$$
We remark that $\rho_\lambda$ and $r_{i,\lambda_1}$ are the radius of $B_{\rho}^{(\lambda)}$ and $B_i$ respectively.

Let $p_1^i=\min\{p(z):z\in \widehat{Q_i}\cap Q^{(4)}\}$ and $p_2^i=\max\{p(z):z\in \widehat{Q_i}\cap Q^{(4)}\}$, we use Lemma \ref{lem-lambda1} to deduce that $\lambda_1^{p_2^i-p_1^i}\leq c$. Keeping this estimate in mind, the argument from \cite[section 6]{bl} with $\rho$ and $r_{i}$ replaced by $\rho_\lambda$ and $r_{i,\lambda_1}$, the details of which we omit, suggests the following Lemma.
\begin{lemma}\label{sobolev ineuality 3 whitney cubes} Let $\lambda_1\geq c_E\widetilde{\lambda}$ and $Q_i\subset \mathbb{R}^{n+1}$ be a parabolic cylinder of Whitney type with $Q^{(3)}\cap Q_i\neq\emptyset$ then there holds,
\begin{equation}\label{5.1}
\int_{Q^{(4)}\backslash E(\lambda_1)}|\widetilde{v}|^2dz\leq c\int_{Q^{(3)}\backslash E(\lambda_1)}|v|^2dz.\end{equation}
In the case $i\in\Theta$ there holds,
\begin{equation}\label{5.22}
\int_{B^{(4)}\times S_1}|\partial_t\widetilde{v}\cdot(\widetilde{v}-v)| dz
\leq
c\lambda_1|Q^{(4)}\backslash E(\lambda_1)|+\frac{c}{s}\int_{Q^{(4)}}|v|^{2}dz.
\end{equation}
and
\begin{equation}\begin{split}\label{5.2}\int_{ Q^{(4)}\backslash E(\lambda_1)}&(|Du|+|F|+1)^{p(z)-1}\left[\lambda^{(2-p_0)/(2p_0)}(\rho_2-\rho_1)^{-1}
|\widetilde{v}|+|D\widetilde{v}|\right]dz\\&\leq c\mu
\lambda_1|Q^{(4)}\backslash E(\lambda_1)|+c\mu s^{-1}\int_{Q^{(4)}\backslash E(\lambda_1)}|v|^2dz.\end{split}\end{equation}
Moreover, in the case $i\in\Theta_1$ there holds,
\begin{equation}\label{5.3}\fint_{Q^{(4)}\cap \widehat{Q_i}}|v-v_{Q^{(4)}\cap \widehat{Q_i}}|dz\leq c\mu\min\{r_{i,\lambda_1},\rho_\lambda\}\lambda_1^{1/p_0^i},\end{equation}
\begin{equation}\label{5.4}
\sup_{Q^{(4)}\cap 2Q_i}\left|\frac{\partial\widetilde{v}}{\partial t}\right|\leq c\mu\lambda_1^{(1-p_0^i)/p_0^i}r_{i,\lambda_1}^{-1}=c\mu\lambda_1^{\frac{2}{p_0^i}-\frac{3}{2}}r_{i}^{-1},\end{equation}
\begin{equation}\label{5.5}
\fint_{Q^{(4)}\cap Q_i}
|\widetilde{v}-v|dz
\leq
c\min\{r_{i,\lambda_1},\rho_\lambda\} \lambda_1^{1/p_0^i},
\end{equation}
and in the case $i\in\Theta_2$ there holds,
\begin{equation}\label{5.6}
\sup_{Q^{(4)}\cap 2Q_i}\left|\frac{\partial\widetilde{v}}{\partial t}\right|\leq c\mu s^{-1}\lambda_1^{\frac{1}{p_0^i}}\rho_{\lambda}.\end{equation}
Specifically, for a.e. $t\in S_1$ we have
\begin{equation}\label{5.7}\int_{B^{(4)}\backslash E_t(\lambda_1)}(|v|^2-|v-\widetilde{v}|^2)(\cdot,t)dx\geq -c\mu\lambda_1|Q^{(4)}\backslash E(\lambda_1)|-c\mu s^{-1}\int_{Q^{(4)}}|u-u_{Q^{(1)}}|^2dz.\end{equation}
The constants $c$ in the above estimates depend only on $n,N,L$ and $\gamma_1$.
\end{lemma}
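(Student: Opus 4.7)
The plan is to adapt the Whitney--Lipschitz truncation argument of [bl, Section 6] to the subquadratic setting, with the unscaled radii $\rho$ and $r_i$ of the reference replaced throughout by $\rho_\lambda=\lambda^{(p_0-2)/(2p_0)}\rho$ and $r_{i,\lambda_1}=\lambda_1^{(p_0^i-2)/(2p_0^i)}r_i$. Three ingredients legitimize this substitution. First, the Whitney covering (Lemma \ref{Whitney}) is already built using the metric $d_z$ associated with $\lambda_1$, so the cylinders $Q_i$ carry the correct shape. Second, the partition of unity $\{\psi_i\}$ of Lemma \ref{unity partition} satisfies $|D\psi_i|\lesssim r_{i,\lambda_1}^{-1}$ and $|\partial_t\psi_i|\lesssim r_i^{-2}$, matching the anisotropic scaling. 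Third, Lemma \ref{lem-lambda1} guarantees the crucial bound $\lambda_1^{p_2^i-p_1^i}\le c$ on every enlarged Whitney cylinder, so that the variable exponent may be treated as essentially constant within a single cube.

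For the $L^2$-bound \eqref{5.1}, I would write $\widetilde v=\sum_i v_{Q_i\cap Q^{(4)}}\psi_i$, apply Jensen's inequality using $\sum_i\psi_i\equiv 1$ and then sum with the finite-overlap property Lemma \ref{Whitney} (ii); the support of the cutoff $\eta\zeta$ confines $v$ to $Q^{(3)}$ and delivers the claimed domain on the right-hand side. For the mean-oscillation estimates \eqref{5.3} and \eqref{5.5}, I would apply Sobolev--Poincar\'e (Lemma \ref{sobolev1}, with $\widetilde\Omega$ chosen as the spatial projection of $\widehat{Q_i}\cap Q^{(4)}$) and exploit Lemma \ref{Whitney} (iii): the existence of a point in $2c_wQ_i\cap E(\lambda_1)$ gives
$$
\fint_{\widehat{Q_i}\cap Q^{(4)}}\bigl(|Du|+|F|+1\bigr)^{p(\cdot)(1-\varepsilon)}dz\le c\lambda_1^{1-\varepsilon},
$$
which inserted into Sobolev--Poincar\'e produces the factor $\mu\min\{r_{i,\lambda_1},\rho_\lambda\}\lambda_1^{1/p_0^i}$; the $\mu=(\rho/(\rho_2-\rho_1))^\beta$ absorbs the derivatives of $\eta\zeta$ through \eqref{etazeta}.

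For the time-derivative bounds \eqref{5.4} and \eqref{5.6}, I would exploit $\sum_i\partial_t\psi_i\equiv 0$ on $\mathbb{R}^{n+1}\setminus E(\lambda_1)$ to rewrite, for $z\in 2Q_j$,
$$
\partial_t\widetilde v(z)=\sum_{i\in I(j)}\bigl(v_{Q_i\cap Q^{(4)}}-v_{Q_j\cap Q^{(4)}}\bigr)\partial_t\psi_i(z),
$$
and then telescope using \eqref{5.3} applied on the bounded chain of neighbours in $I(j)$. The dichotomy between $\Theta_1$ and $\Theta_2$ corresponds to whether $\widehat{Q_i}\subset\mathbb{R}^n\times S_2$, so that the full oscillation estimate is available, or whether only the coarser bound from $r_i^2\gtrsim s$ can be used. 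Estimate \eqref{5.2} then follows by combining the gradient estimate of Lemma \ref{unity partition} with H\"older's inequality and the maximal-function bound; \eqref{5.22} is obtained by integrating the pointwise estimate for $\partial_t\widetilde v$ against $|\widetilde v-v|$ and summing with finite overlap, with the two regimes $\Theta_1,\Theta_2$ contributing the $\lambda_1|Q^{(4)}\setminus E(\lambda_1)|$ and $s^{-1}\int|v|^2$ terms respectively. Finally, \eqref{5.7} is a direct consequence of the algebraic identity $|v|^2-|v-\widetilde v|^2=\widetilde v(2v-\widetilde v)$ together with \eqref{5.22} and \eqref{5.1}.

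The principal difficulty is precisely the one the author highlights: the geometry is now governed by two potentially very different parameters $\lambda$ and $\lambda_1\ge c_E\widetildelambda$, and in the subquadratic regime the exponents $(p_0-2)/(2p_0)$, $(p_0^i-2)/(2p_0^i)$ are negative, so the cylinders are elongated in space rather than in time. Tracking these exponents through every Sobolev--Poincar\'e and covering step, and verifying at each stage that the variable-exponent factor $\lambda_1^{p_2^i-p_1^i}$ remains bounded via Lemma \ref{lem-lambda1}, is where the technical work lies; once this bookkeeping is performed, the reasoning parallels the superquadratic template of [bl] line by line, which is why the author defers the details.
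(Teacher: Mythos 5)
Your proposal matches the paper's intended argument exactly: the paper itself gives no proof, delegating to \cite[Section~6]{bl} with the substitution $\rho\mapsto\rho_\lambda=\lambda^{(p_0-2)/(2p_0)}\rho$, $r_i\mapsto r_{i,\lambda_1}=\lambda_1^{(p_0^i-2)/(2p_0^i)}r_i$, after noting that Lemma~\ref{lem-lambda1} yields $\lambda_1^{p_2^i-p_1^i}\le c$ on any enlarged Whitney cylinder. You identify precisely these three ingredients (the $d_z$-adapted Whitney covering of Lemma~\ref{Whitney}, the anisotropic derivative bounds for $\psi_i$ in Lemma~\ref{unity partition}, and the exponent-stability $\lambda_1^{p_2^i-p_1^i}\le c$) and then sketch each estimate in the order the template requires, so your reasoning is the paper's own argument unfolded at slightly greater detail.

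One point deserves a caveat. For \eqref{5.7} you assert a ``direct consequence'' of the algebraic identity $|v|^2-|v-\widetilde v|^2=\widetilde v\cdot(2v-\widetilde v)$ combined with \eqref{5.22} and \eqref{5.1}. But \eqref{5.7} is a bound for a.e.\ \emph{fixed} $t\in S_1$, while \eqref{5.22} and \eqref{5.1} are space--time integrals; they cannot be applied slice-wise without further work. The actual route (as in \cite[Section~6]{bl}) is to write $|v|^2-|v-\widetilde v|^2\ge -|v-\widetilde v|^2$ on $B^{(4)}\setminus E_t(\lambda_1)$, decompose the time slice over the Whitney cylinders $Q_i$ whose time-projections contain $t$, bound $|\widetilde v-v|$ on $Q_i$ via \eqref{5.3}/\eqref{5.5} for $i\in\Theta_1$ and by the coarser size bound for $i\in\Theta_2$, and finally observe that $\sum_{i:\,t\in\Lambda_i}|B_i|\,r_i^2\lesssim |Q^{(4)}\setminus E(\lambda_1)|$ by the finite-overlap property, which is what converts the spatial slice integral into the space--time measure appearing on the right-hand side. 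Your sketch skips this bookkeeping; it is not a wrong approach, but it is not ``direct'' and you should be aware that a genuinely different covering argument, rather than \eqref{5.22} or \eqref{5.1}, does the work for the pointwise-in-$t$ estimate.

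Otherwise your outline of \eqref{5.1} via Jensen plus finite overlap, \eqref{5.3}/\eqref{5.5} via Sobolev--Poincar\'e and Lemma~\ref{Whitney}(iii), \eqref{5.4}/\eqref{5.6} via $\sum_i\partial_t\psi_i\equiv0$ and telescoping over $I(j)$, and \eqref{5.2} via the gradient bound, H\"older, and the maximal-function control from $2c_w Q_i\cap E(\lambda_1)\ne\emptyset$, all agree with what the reference proof does once the stated substitutions are performed.
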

This is a standard result which can be proved by the method in \cite[section 6]{bl} and no proof will be given here.

Next, we study the Lipschitz continuity of $\widetilde{v}$ on $B^{(4)}\times S_1$. This property will be essential in the proof of the  Caccioppoli inequality, since it ensures that $\widetilde{v}$ is an admissible testing function for the parabolic system. For simplicity of notation, we let $Q_4^1$ and $Q_5^2$ stand for $B^{(4)}\times S_1$ and $B^{(5)}\times S_2$ respectively. It is also necessary to check the equivalence of the two parabolic metrics $d_P$ and $d_z$ where $z\in Q^{(4)}$. For any fixed $z_1,z_1\in Q^{(6)}$, we first observe that
\begin{equation}\begin{split}\label{metricdp}d_z(z_1,z_2)
= \max\left\{\lambda_1^{(2-p(z))/[2p(z)]}|x_1-x_2|,\sqrt{|t_1-t_2|}\right\}
\geq d_P(z_1,z_2),\end{split}\end{equation}
since $p(z)\leq 2$ and $\lambda_1\geq1$. On the other hand, since $\lambda_1^{(2-p(z))/[2p(z)]}\leq \lambda_1^{(2-p_1)/(2p_1)}$ then we get
\begin{equation}\begin{split}\label{metricdz}d_z(z_1,z_2)\leq 2\lambda_1^{(2-p_1)/(2p_1)}d_P(z_1,z_2)\end{split}\end{equation}
for any $z_1,z_1\in Q^{(6)}$. Hence, $d_z$ and $d_P$ are equivalent for any $z\in Q^{(4)}$.
\begin{lemma}\label{Lipschitz extension in space direction} Let $\lambda_1\geq c_E\widetilde{\lambda}$. Then for any $z_1,z_2\in B^{(4)}\times S_1$ there exists a constant $K>0$ such that
\begin{equation}\label{lipschitz}|\tilde v(z_1)-\tilde v(z_1)|\leq K\left(|x_1-x_2|+\sqrt{|t_1-t_2|}\right).\end{equation}
where the constant $K$ depends on $\lambda$,
$\lambda_1$,
$p_1$, $p_2$, $\rho$, $\rho_1$, $\rho_2$ and $\|v\|_{L^1(Q^{(4)})}$.
\end{lemma}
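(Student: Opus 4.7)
The plan is to follow the classical Lipschitz truncation scheme adapted to the intrinsic parabolic geometry used here, splitting the argument into three cases according to whether the two points $z_1,z_2 \in Q_4^1=B^{(4)}\times S_1$ belong to the good set $E(\lambda_1)$ or to its complement, where $\widetilde v$ is defined by the Whitney extension \eqref{test-function}. Since the constant $K$ is allowed to depend on $\lambda,\lambda_1,p_1,p_2,\rho,\rho_1,\rho_2$ and $\|v\|_{L^1(Q^{(4)})}$, the result is qualitative in nature: its purpose is to certify that $\widetilde v$ is admissible as a test function (modulo standard mollification in time) rather than to sharpen any constant. Throughout I will exploit the equivalence of $d_P$ and $d_z$ expressed by \eqref{metricdp}--\eqref{metricdz}, which is a bi-Lipschitz equivalence with a $\lambda_1$-dependent constant that can be absorbed into $K$.

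\emph{Case 1: $z_1,z_2 \notin E(\lambda_1)$.} I use the partition of unity of Lemma \ref{unity partition}. Since $\sum_i \psi_i \equiv 1$ on $\mathbb R^{n+1}\setminus E(\lambda_1)$ and only finitely many $\psi_i$ are nonzero at $z_1$ or $z_2$ (by Lemma \ref{Whitney}(ii)), I fix an index $i_0$ with $z_1 \in 2Q_{i_0}$ and write
\[
 \widetilde v(z_1)-\widetilde v(z_2)
 =\sum_{i} \bigl(v_{Q_i\cap Q^{(4)}}-v_{Q_{i_0}\cap Q^{(4)}}\bigr)\bigl(\psi_i(z_1)-\psi_i(z_2)\bigr).
\]
For indices contributing to the sum, Lemma \ref{Whitney}(iv)--(v) places $Q_i$ in $\hat c Q_{i_0}$, so the averages $v_{Q_i\cap Q^{(4)}}$ and $v_{Q_{i_0}\cap Q^{(4)}}$ are both comparable to $v_{\widehat{Q_{i_0}}\cap Q^{(4)}}$; a Poincaré-type bound as in \eqref{5.3} and (5.5) of Lemma \ref{sobolev ineuality 3 whitney cubes} (applied on $\widehat{Q_{i_0}}$) estimates the constants in terms of $\mu\min\{r_{i_0,\lambda_1},\rho_\lambda\}\lambda_1^{1/p_0^{i_0}}$. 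For the differences of the cutoffs I use Lemma \ref{unity partition}, giving $|\psi_i(z_1)-\psi_i(z_2)|\leq c\lambda_1^{(2-p_0^i)/(2p_0^i)}r_i^{-1}|x_1-x_2|+cr_i^{-2}|t_1-t_2|$. Combining these estimates and using $p_1\le p_0^i\le p_2$ together with the lower bound on $r_{i_0}$ coming from $z_1\in B^{(4)}\times S_1\Subset Q^{(4)}$ (so $r_{i_0}$ is at least a fixed multiple of the distance to $\partial Q^{(4)}$), I obtain a Lipschitz bound depending only on the allowed data.

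\emph{Case 2: $z_1,z_2\in E(\lambda_1)$, both Lebesgue points of $v$.} Here $\widetilde v=v$ and I use the standard Campanato–Meyers characterization of Lipschitz functions via control of the maximal function. Concretely, for any parabolic cylinder $Q\ni z_k$ contained in $Q^{(4)}$ I estimate $|v(z_k)-v_Q|$ by a telescoping series of averages over dyadically shrinking cylinders; each term is controlled by $M_{Q^{(4)}}(z_k)\leq \lambda_1^{1-\varepsilon}$ after translating between $p(\cdot)$ averages and $L^1$ averages (using the pointwise bounds on $|u-u_{Q^{(1)}}|/(\lambda^{(p_0-2)/(2p_0)}\rho)$ and $Du$ built into the definition of $M_{Q^{(4)}}$). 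Choosing $Q$ to be a common parabolic cylinder containing both $z_1$ and $z_2$ with radius comparable to $d_P(z_1,z_2)$ and invoking the equivalence of $d_P$ and $d_z$ yields the required Lipschitz inequality with a constant depending on $\lambda_1,p_1,p_2$ and the structural data.

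\emph{Case 3 (bridging) and conclusion.} If $z_1\in E(\lambda_1)$ and $z_2\notin E(\lambda_1)$, let $Q_j$ be a Whitney cylinder containing $z_2$; by Lemma \ref{Whitney}(iii) there is a point $z^\ast\in 2c_w Q_j\cap E(\lambda_1)$ with $d_z(z_2,z^\ast)\le 2c_w r_j\le Cd_z(z_1,z_2)$. I estimate $|\widetilde v(z_2)-\widetilde v(z^\ast)|$ by Case 1 and $|\widetilde v(z^\ast)-\widetilde v(z_1)|=|v(z^\ast)-v(z_1)|$ by Case 2, then convert the bound back to $d_P$. The points of $E(\lambda_1)$ that are not Lebesgue points form a null set, but Lipschitz continuity extends by density. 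The main obstacle is keeping track of the $\lambda_1$-dependent scaling between $d_P$ and $d_z$ and of the variable-exponent scaling factors $\lambda_1^{(2-p_0^i)/(2p_0^i)}$ appearing in both the Whitney cylinders and the gradient bounds on $\psi_i$; these are handled uniformly using the oscillation estimate $\lambda_1^{p_2^i-p_1^i}\le c$ from Lemma \ref{lem-lambda1} together with the bounds on $p_1,p_2$, all of which enter $K$ and are thus permitted by the statement.
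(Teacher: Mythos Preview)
Your three-case scheme is the classical direct approach to Lipschitz truncation, whereas the paper proceeds quite differently: it invokes Da Prato's integral characterization of Lipschitz functions and shows that the Campanato-type quantity
\[
T_r(w)=\frac{1}{|Q_4^1\cap Q_r(z_w)|^{1+1/(n+2)}}\int_{Q_4^1\cap Q_r(z_w)}\bigl|\tilde v-\tilde v_{Q_4^1\cap Q_r(z_w)}\bigr|\,dz
\]
is bounded independently of $z_w$ and $r$, via a four-case analysis according to the position and size of $Q_r(z_w)$ relative to $E(\lambda_1)$ and $Q_5^2$. The advantage of the paper's route is that the time-regularity issue is localized to one clean place (the term $T_2^{(2)}$), where the Steklov form \eqref{11} of the system is invoked explicitly together with the growth condition \eqref{4}.

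Your proposal has two concrete gaps. In Case~1 you claim a lower bound on $r_{i_0}$ because $z_1\in B^{(4)}\times S_1\Subset Q^{(4)}$; this is incorrect, since by Lemma~\ref{Whitney}(iii) the Whitney radii are comparable to the $d_{z_i}$-distance to $E(\lambda_1)$, not to $\partial Q^{(4)}$, and hence can be arbitrarily small. The partition-of-unity identity you wrote is only useful when $z_2$ lies in a Whitney cylinder neighbouring $Q_{i_0}$; once $d_P(z_1,z_2)\gtrsim r_{i_0}$ you must bridge through a point of $E(\lambda_1)$, which pushes the burden back onto Case~2. More seriously, in Case~2 you assert that each telescoping term $|v_{Q_{r_{k}}}-v_{Q_{r_{k+1}}}|$ is controlled by $M_{Q^{(4)}}(z_k)$, but the maximal function only dominates space-time averages of $|u-u_{Q^{(1)}}|/\rho_\lambda$, $|Du|$ and $|F|$: this handles the \emph{spatial} oscillation via Poincar\'e, yet gives no control on $|v_{B_{r_k}}(t_1)-v_{B_{r_k}}(t_2)|$ in the time direction. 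In the parabolic setting this step genuinely requires the equation---one tests \eqref{11} with a spatial cut-off and uses \eqref{4} to convert the time increment into an integral of $(1+|Du|+|F|)^{p(\cdot)-1}$, which is then bounded via the maximal function (this is precisely the estimate for $T_2^{(2)}$ in the paper, and is also the mechanism behind Lemma~\ref{sobolev1}). Without invoking the PDE your Case~2 does not close, and since Cases~1 and~3 ultimately rely on it, the argument as written is incomplete.
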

\begin{proof}We use the integral characterization of Lipschitz continuous functions due to Da Prato (\cite[page 32]{BDM} or \cite[Theorem 3.1]{D}) to prove this Lemma.
For $z_w=(x_w,t_w)\in \overline{Q_4^1}$, we define
$$T_{r}(w)=\frac{1}{|Q_4^1\cap Q_r(z_w)|^{1+\frac{1}{n+2}}}\int_{Q_4^1\cap Q_r(z_w)}\big|\widetilde{v}-\widetilde{v}_{Q_4^1\cap Q_r(z_w)}\big|dz$$
where $Q_{r}(z_w)=B_{r}(x_w)\times(t_w-r^2,t_w+r^2)$. We are going to show that $T_{r}(w)$ is bounded independent of $z_w$ and $r$. To this aim we shall distinguish between the following four cases:
\begin{equation*}
\begin{cases}
2Q_r(z_w)\subset Q_5^2\backslash E(\lambda_1),\\
2Q_r(z_w)\cap E(\lambda_1)\neq \emptyset,\ 2Q_r(w)\subset Q_5^2\ \text{and}\ r<\tfrac{1}{3}(\rho_2-\rho_1),\\
2Q_r(z_w)\cap E(\lambda_1)\neq \emptyset,\ 2Q_r(w)\subset Q_5^2\ \text{and}\ r\geq\tfrac{1}{3}(\rho_2-\rho_1), \\ 2Q_r(w)\backslash Q_5^2 \neq \emptyset,\\
\end{cases}
\end{equation*}

In the first case, we observe that $|Q_4^1\cap Q_r(z_w)|\geq c_nr^{n+2}$ and this implies
\begin{equation*}\begin{split}\label{1st case}T_{r}(w)&\leq c_nr^{-1}\fint_{Q_4^1\cap Q_r(w)}\fint_{Q_4^1\cap Q_r(w)}\big|\widetilde{v}(z)-\widetilde{v}(\tilde{z})\big|dzd\tilde z\\&\leq c_n\max_{z\in Q_4^1\cap Q_r(w)}|D\widetilde{v}(z)|+c_nr\max_{z\in Q_4^1\cap Q_r(w)}|\partial_t\widetilde{v}(z)|,\end{split}\end{equation*}
since $\widetilde{v}$ is smooth on $Q_5^2\backslash E(\lambda_1)$. Now we consider $z\in Q_4^1\cap Q_r(z_w)$. Then there exists $i\in\Theta$ such that $z\in Q_i$. Since $2Q_r(z_w)\subset Q_5^2\backslash E(\lambda_1)$ we have $d_P(z,E(\lambda_1))\geq r$.
 Let $\hat z_i\in E(\lambda_1)$ be the point such that $d_{z_i}(z_i,\hat z_i)=d_{z_i}(z_i,E(\lambda_1))\leq 2\hat{c}r_i$. We now use (\ref{metricdp}) to infer that
\[\begin{split}r\leq d_P(z,\hat z_i)\leq d_P(z,z_i)+d_P(z_i,\hat z_i)\leq d_{z_i}(z,z_i)+d_{z_i}(z_i,E(\lambda_1))\leq 3\hat{c}r_i.\end{split}\]
From the above inequality and the definition of $\widetilde{v}$, we use Lemma \ref{unity partition} to conclude that
\[\begin{split}|D\widetilde{v}(z)|+r|\partial_t\widetilde{v}(z)|&\leq \sum_{j\in I(i)}|D\psi_j||v_{Q_j}-v_{Q_i}|+r\sum_{j\in I(i)}|\partial_t\psi_j||v_{Q_j}-v_{Q_i}|
\\&\leq\left(\frac{c\lambda_1^{(2-p_0^i)/(2p_0^i)}}{r_i}+\frac{3\hat{c}r_i}{r_i^2}\right)\fint_{\widehat{Q_i}\cap Q^{(4)}}|v-v_{\widehat{Q_i}\cap Q^{(4)}}|dz
\\&\leq\frac{c(\lambda_1)}{r_i}\fint_{\widehat{Q_i}\cap Q^{(4)}}|v-v_{\widehat{Q_i}\cap Q^{(4)}}|dz,\end{split}\]
Keeping the estimate in mind, we apply Lemma \ref{sobolev ineuality 3 whitney cubes} (\ref{5.3}) to find that in the case $i\in\Theta_1$,
\begin{equation}\label{1st case}T_{r}(w)\leq \frac{c(\lambda_1)}{r_i}\fint_{\widehat{Q_i}\cap Q^{(4)}}|v-v_{\widehat{Q_i}\cap Q^{(4)}}|dz\leq c\mu\lambda_1^{1/2}.\end{equation}
In the case $i\in \Theta_2$, we have
$r_i^2\geq \hat c^{-1}(\rho_2-\rho_1)^2$ and therefore
$$|\widehat{Q_i}\cap Q^{(4)}|\geq c|Q_i|\geq c\lambda_1^{n(p_0^i-2)/(2p_0^i)}r_i^{n+2}\geq A(n,\lambda_1,p_1,p_2,\rho_1,\rho_2),$$
where the constant $A$ depends on $n,\lambda_1,p_1,p_2,\rho_1$ and $\rho_2$.
It follows that
\begin{equation}\label{2nd case}T_{r}(w)\leq \frac{c(\lambda_1)}{r_i}\fint_{\widehat{Q_i}\cap Q^{(4)}}|v|dz\leq A\|v\|_{L^1(Q^{(4)})}\leq A_1(n,\lambda_1,p_1,p_2,\rho_1,\rho_2).\end{equation}
The upper bounds in (\ref{1st case}) and (\ref{2nd case}) are independent of $z_w$ and $r$ and this proves the Lemma in the first case.

We now turn our attention to the second case. Since $z\in Q_4^1\cap Q_r(z_w)$ then it is easy to check that $|Q_r(z_w)\cap Q_4^1|\geq c|Q_r(w)|$. We obtain
\[\begin{split}T_{r}(w)\leq \frac{c_n}{|Q_r(z_w)|^{1+\frac{1}{n+2}}}\int_{Q_r(z_w)\cap Q_4^1}2|\widetilde{v}-v|+|v-v_{Q_r(z_w)\cap Q_4^1}|dz=:c(2T_1+T_2),\end{split}\]
with the obvious meaning of $T_1$ and $T_2$. To estimate $T_2$, we apply the arguments in the spirit of the proof of \cite[Lemma 5.11]{B}.
Similarly to there, we construct a weight function $\hat{\eta}\in C_0^\infty(B_r(x_w)\cap B^{(4)})$ satisfying $\hat\eta\geq0$, $\int_{\R^n}\hat\eta dx=1$ and $|D\hat\eta|\leq c\max\{r^{-1-n},\rho^{-1-n}\}$. Let $v_{\hat\eta}(t)=\int_{\mathrm{R}^n}(v\hat\eta)(\cdot,t)\, dx$, we conclude that
\[\begin{split}
T_2&\leq \frac{c}{r}\fint_{Q_r(z_w)\cap Q_4^1}|v-v_{\hat\eta}| dz+\frac{c}{r}\max_{t_1,t_2\in S_1\cap(t_w-r^2,t_w+r^2)}\left|v_{\hat\eta}(t_2)-
v_{\hat\eta}(t_1)\right|\\&\leq \frac{c}{r}\min\{r,\lambda^{\frac{p_0-2}{2p_0}}\rho\}\fint_{Q_r(z_w)\cap Q_4^1}|Dv| dz
+\frac{c}{r}\max_{t_1,t_2\in S_1\cap(t_w-r^2,t_w+r^2)}\left|v_{\hat\eta}(t_2)-
v_{\hat\eta}(t_1)\right|\\&=:T_2^{(1)}+T_2^{(2)},\end{split}\]
with the obvious meaning of $T_2^{(1)}$ and $T_2^{(2)}$. In order to estimate $T_2^{(1)}$, we fix a point $z_w^\prime\in 2Q_r(z_w)\cap E(\lambda_1)$. Then we have
\begin{equation}\begin{split}\label{3rd case}T_2^{(1)}&\leq c\fint_{2Q_r(z_w)}\left(|Du|+\lambda^{(2-p_0)/(2p_0)}(\rho_2-\rho_1)^{-1}|u-u_{Q^{(1)}}|+1\right)
^{p(z)/\tilde q} dz\\&\leq c\mu M_{Q^{(4)}}(z_w^\prime)^{1/(\tilde q(1-\epsilon))}\leq c\mu\lambda_1^{1/\tilde q}.\end{split}\end{equation}
We now proceed to estimate $T_2^{(2)}$. Since $r<\tfrac{1}{3}(\rho_2-\rho_1)$, we have $ S_1\cap(t_w-r^2,t_w+r^2)\subset S_2$. Notice that $\zeta(t)\equiv 1$ on $S_2$, we have $v(x,t)=(u-u_{Q^{(1)}})\eta(x)$ whenever $t\in S_1\cap(t_w-r^2,t_w+r^2)$. Then we apply Steklov formula (\ref{11}) with $\varphi=\eta\hat\eta$, and obtain for $h>0$ and $t_1,t_2\in S_1\cap(t_w-r^2,t_w+r^2)$ that
\begin{equation*}\begin{split}
&\left|([u]_h)_{\eta\hat \eta}(t_2)-([u]_h)_{\eta\hat \eta}(t_1)\right|
\\&\leq\int_{t_1}^{t_2}\int_{B^{(3)}\cap B_r(x_w)}|\langle [A(z,Du)]_h,D(\eta\hat \eta)\rangle|+|\langle[B(z,Du)]_h,\eta\hat \eta\rangle|dxdt.
\end{split}\end{equation*}
Letting $h\downarrow 0$ and using assumption (\ref{4}) we observe that
\begin{equation*}\begin{split}
\left|v_{\hat \eta}(t_2)-v_{\hat \eta}(t_1)\right|
&\leq
c(1+\|D(\eta\hat \eta)\|_{L^\infty})\int_{Q^{(3)}\cap Q_r(w)} (1+|Du|+|F|)^{p(z)-1}\,dz.\end{split}\end{equation*}
To estimate the right hand side of the above estimate, we have
$|D(\eta\hat \eta)|
\leq c\mu \lambda^{(2-p_0^i)/p_0^i}r^{-1-n}$. Using these estimates we find that
\begin{equation*}\begin{split}
\left|v_{\hat \eta}(t_2)-v_{\hat \eta}(t_1)\right|
&\leq
c\mu \lambda^{(2-p_0^i)/p_0^i}r^{-1-n}
\int_{Q^{(3)}\cap Q_r(z_w)}\left(1+|Du|+|F|\right)^{p_2-1}dz \\
&\le
c\,\mu \lambda^{(2-p_0^i)/p_0^i}r^{-1-n}  M_{Q^{(4)}}(z_w^\prime)^{\frac{p_2-1}{(1-\epsilon)p_1}} |Q_r(z_w)|
\leq Br,\end{split}\end{equation*}
where the constant $B$ depends on $n,\lambda,\lambda_1,p_1,p_2,\rho_1$ and $\rho_2$. This implies that
\begin{equation}\label{4th case}T_2^{(2)}\leq B(n,\lambda,\lambda_1,p_1,p_2,\rho_1,\rho_2).\end{equation}
The next task is now to estimate $T_1$. Recalling that from the proof of the first case, we actually proved that for any $i\in\Theta$ with $2Q_i\cap (Q_4^1\cap Q_r(z_w))\neq\emptyset$,
$$\int_{\widehat{Q_i}\cap Q^{(4)}}|v-v_{\widehat{Q_i}\cap Q^{(4)}}|dz\leq A_1(n,\lambda_1,p_1,p_2,\rho_1,\rho_2)r_i|\widehat{Q_i}\cap Q^{(4)}|.$$
Notice that $\mathrm{supp}v\subset Q^{(3)}$, then we use the estimate above to obtain
\begin{equation*}\begin{split}T_1\leq\frac{c}{r^{n+3}}\int_{(Q_4^1\cap Q_r(z_w))\backslash E(\lambda_1)}|\widetilde{v}-v|dz&\leq \frac{c}{r^{n+3}}\sum_{i:2Q_i\cap Q_4^1\cap Q_r(z_w)\neq\emptyset}\int_{\widehat{Q_i}\cap Q^{(4)}}|v-v_{\widehat{Q_i}\cap Q^{(4)}}|dz\\&\leq \frac{cA_1}{r^{n+3}}\sum_{i:2Q_i\cap Q_4^1\cap Q_r(z_w)\neq\emptyset}r_i|\widehat{Q_i}\cap Q^{(4)}|,\end{split}\end{equation*}
where the constant $A_1$ is independent of $z_w$ and $r$. Now we proceed to estimate $T_1$ by using the geometric properties of Whitney cylinders. Let $w_1$ and $w_2$ be two points
in $2Q_r(z_w)$ satisfying $w_1\in 2Q_i\cap 2Q_r(z_w)$ and $w_2\in2Q_r(z_w)\cap E(\lambda_1)$. Then we use (\ref{metricdz}) to obtain
$$r_i\leq \tfrac{1}{\hat c}d_{z_i}(z_i,w_2)\leq \tfrac{1}{\hat c}[d_{z_i}(z_i,w_1)+d_{z_i}(w_1,w_2)]\leq \tfrac{1}{\hat c}[2r_i+2\lambda_1^{(2-p_1)/(2p_1)}d_P(w_1,w_2)]$$
From Lemma \ref{Whitney} (v), we see that $\hat c>4$. This implies that $r_i\leq c(p_1,\lambda_1)r$.
Then we conclude that there exists a constant $A_2$ independent of $z_w$ and $r$ such that for any $i\in\Theta$ with $2Q_i\cap (Q_4^1\cap Q_r(z_w))\neq\emptyset$ we have $\widehat{Q_i}\subset Q_{A_2r}(z_w)$. Then we can further estimate
\begin{equation}\label{5th case}T_1\leq \frac{cA_1}{r^{n+2}}\sum_{i\in\Theta:2Q_i\cap Q_4^1\cap Q_r(z_w)\neq\emptyset}|\widehat{Q_i}\cap Q_{A_2r}(z_w)|
\leq\frac{cA_1}{r^{n+2}}|Q_{A_2r}(z_w)|\leq cA_2^{n+2}A_1.\end{equation}
Combining the estimates (\ref{3rd case}), (\ref{4th case}) and (\ref{5th case}), we arrive at
\begin{equation}\label{6th case}T_r(w)\leq A(n,\lambda,\lambda_1,p_1,p_2,\rho_1,\rho_2)\end{equation}
and this proves the Lemma in the second case.

Finally we come to the third and fourth case. We first observe that in both cases we obtain $|Q_4^1\cap Q_r(z_w)|\geq B(n,\lambda,\lambda_1,p_1,p_2,\rho_1,\rho_2)$. Then we conclude from Lemma \ref{sobolev ineuality 3 whitney cubes} (\ref{5.1}) that
\begin{equation*}\begin{split}T_{r}(w)\leq B\int_{Q_4^1\cap Q_r(z_w)}|\widetilde{v}|dz\leq B\int_{Q^{(3)}}|v|dz+B\int_{Q^{(3)}\backslash E(\lambda_1)}|\widetilde{v}|dz\leq 2B\|v\|_{L^1(Q^{(4)})},
\end{split}\end{equation*}
which proves (\ref{6th case}) in the third and fourth case and the proof of Lemma \ref{Lipschitz extension in space direction} is complete.
\end{proof}
Now, we state our Caccioppoli type inequality as follows:
\begin{theorem}\label{Caccioppoli}Let $M\geq1$. Then there exist $\varepsilon=\varepsilon(n,N,L,\gamma_1)$ and $\rho_0=\rho_0(n,M,\gamma_1,L)$ such that the following holds: Suppose that $u$ is a very weak solution to the the parabolic system (\ref{5}) and let the assumptions of Theorem \ref{main theorem} be satisfied. Finally, assume that for some parabolic cylinder $Q:=Q_\rho^{(\lambda)}(z_0)$ with $32Q=Q_{32\rho}^{(\lambda)}(z_0)\subset\Omega_T$ with $0<32\rho\leq\rho_0$ the following intrinsic coupling holds:
\begin{equation}\label{a1}\lambda^{1-\varepsilon}\leq\fint_{Q}\left(|Du|+|F|+1\right)^{p(\cdot)(1-\varepsilon)}dz,\end{equation}
and
\begin{equation}\label{a2}\fint_{16Q}\left(|Du|+|F|+1\right)^{p(z)(1-\varepsilon)}dz\leq \lambda^{1-\varepsilon}.
\end{equation}
Then, for $\rho_1=\rho$ and $\rho_2=16\rho$ we have
\begin{equation}\label{cac1}\begin{split}
&\lambda^{1-\varepsilon}|Q^{(4)}|
+\sup_{t\in\Lambda}\int_{B}
|u-u_{Q}|^2m_{Q^{(4)}}^{-\varepsilon}(\cdot,t)dx\\ &\leq c\int_{Q^{(4)}}\left|\frac{u-u_{Q}}{\lambda^{\frac{p_0-2}{2p_0}}\rho}
\right|^{p(\cdot)(1-\varepsilon)}dz+c\lambda^{-\varepsilon}
\int_{Q^{(4)}}\left|\frac{u-u_{Q}}{\rho}\right|^2dz+c\int_{Q^{(4)}}(1+|F|)^{p(\cdot)(1-\epsilon)}dz,
\end{split}\end{equation}
where $m_{16Q}(z)=\max\{(c_E\widetilde{\lambda})^{1/(1-\varepsilon)},M_{16Q}(z)\}$ and $\widetilde{\lambda}$ is defined in \eqref{lambdatilde}.
Moreover, for $\rho\le\rho_1<\rho_2\le16\rho$, there holds
\begin{equation}\label{cac2}\begin{split}
	\lambda^{1-\varepsilon}|Q^{(4)}|&
+\sup_{t\in\Lambda^{(1)}}\int_{B^{(1)}}
|u-u_{Q^{(1)}}|^2m_{Q^{(4)}}^{-\varepsilon}(\cdot,t)dx\\ &\leq c\mu\int_{Q^{(4)}}
\left|\frac{u-u_{Q^{(1)}}}{\lambda^{\frac{p_0-2}{2p_0}}\rho_2}
\right|^{p(\cdot)(1-\varepsilon)}dz+c\mu\lambda^{-\varepsilon}
\int_{Q^{(4)}}\left|\frac{u-u_{Q^{(1)}}}{\rho_2}\right|^2dz\\&
+c\int_{Q^{(4)}}(1+|F|)^{p(\cdot)(1-\epsilon)}dz
+c_1\mu\varepsilon\lambda^{1-\varepsilon}|Q^{(4)}|.
\end{split}\end{equation}
In any cases, the constants $c$ depend only on $n$, $\nu$, $L$ and $\gamma_1$.
\end{theorem}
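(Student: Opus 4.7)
The plan is to test the Steklov form (\ref{11}) of the system with the truncated function $\widetilde{v}$ from (\ref{test-function}) and then integrate the resulting estimate against the measure $d\lambda_1/\lambda_1^{1+\varepsilon}$ over $\lambda_1\in[c_E\widetildelambda,\infty)$. The level-set structure of $E(\lambda_1)$ turns this one-parameter integration into the weight $m_{Q^{(4)}}^{-\varepsilon}$ appearing on the left-hand side, via the elementary identity
\[
\int_{c_E\widetildelambda}^\infty \chi_{\{M_{Q^{(4)}}\le \lambda_1^{1-\varepsilon}\}} \lambda_1^{-1-\varepsilon}\,d\lambda_1 \simeq m_{Q^{(4)}}^{-\varepsilon},
\]
which is the mechanism that produces a Caccioppoli inequality with a correction factor that degenerates only mildly as $\varepsilon\to 0$.

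For a fixed $\lambda_1\ge c_E\widetildelambda$, first I would test (\ref{11}) against $\widetilde{v}$, letting the Steklov parameter $h\downarrow 0$. The parabolic term yields, after the standard integration by parts, a boundary contribution at the sup-time plus the cross-term $\int \partial_t\widetilde{v}\cdot(\widetilde{v}-v)$ handled through (\ref{5.22}); on $E(\lambda_1)$ one recovers $\tfrac12|v(\cdot,t)|^2$, while on $Q^{(4)}\setminus E(\lambda_1)$ the contribution is bounded below by the bad terms in (\ref{5.7}). The divergence term, on the set $E(\lambda_1)$ where $\widetilde{v}=\eta\zeta(u-u_{Q^{(1)}})$, produces via the ellipticity assumption the good energy $\nu\int \eta^2\zeta^2|Du|^{p(\cdot)}$, together with boundary-term remainders controlled by $|D\eta|,|\partial_t\zeta|$ (hence by $\mu$). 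Off $E(\lambda_1)$, all bad terms of the form $(1+|Du|+|F|)^{p(\cdot)-1}(|\widetilde{v}|/\rho_\lambda+|D\widetilde{v}|)$ are estimated directly by (\ref{5.2}), giving a contribution $\le c\mu\lambda_1|Q^{(4)}\setminus E(\lambda_1)|+c\mu s^{-1}\int|v|^2\,dz$. The Lipschitz property from Lemma \ref{Lipschitz extension in space direction} is what makes $\widetilde{v}$ admissible in the Steklov equation in the first place.

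Next I would integrate the resulting one-parameter inequality against $\lambda_1^{-1-\varepsilon}d\lambda_1$ from $c_E\widetildelambda$ to $\infty$. The terms that live on $E(\lambda_1)$ combine via the $\lambda_1^{-\varepsilon}$ weight to reproduce the $m_{Q^{(4)}}^{-\varepsilon}$ factor on the energy and on the spatial derivative. The terms living on $Q^{(4)}\setminus E(\lambda_1)$ of the form $\lambda_1|Q^{(4)}\setminus E(\lambda_1)|$ become, by the weak-type estimate for the strong maximal function together with Fubini,
\[
\int_{c_E\widetildelambda}^\infty \lambda_1^{-\varepsilon}|Q^{(4)}\setminus E(\lambda_1)|\,d\lambda_1 \le c\int_{Q^{(4)}}\Bigl(\tfrac{|u-u_{Q^{(1)}}|}{\rho_\lambda}+|Du|+|F|+1\Bigr)^{p(\cdot)(1-\varepsilon)}dz,
\]
which supplies precisely the right-hand side of (\ref{cac1})--(\ref{cac2}) together with the $(1+|F|)^{p(\cdot)(1-\varepsilon)}$ term by separating $|Du|^{p(\cdot)(1-\varepsilon)}\le \varepsilon\lambda^{1-\varepsilon}+$ the reverse-Jensen-adjusted intrinsic coupling (\ref{a2}); this is where the spurious term $c_1\mu\varepsilon\lambda^{1-\varepsilon}|Q^{(4)}|$ in (\ref{cac2}) is born. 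The lower bound $\lambda^{1-\varepsilon}|Q^{(4)}|$ on the left comes from the intrinsic coupling (\ref{a1}) combined with the pointwise bound $m_{Q^{(4)}}\ge c\lambda$ on $Q$.

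To pass from (\ref{cac2}) to (\ref{cac1}) one specializes $\rho_1=\rho$, $\rho_2=16\rho$, so that $\mu$ is a universal constant, and then applies the iteration Lemma \ref{iteration lemma} to a family of shrinking cylinders in order to absorb the spurious term $c_1\mu\varepsilon\lambda^{1-\varepsilon}|Q^{(4)}|$ into the LHS; this step fixes the smallness threshold $\varepsilon_0$ so that $c_1\mu\varepsilon<\tfrac12$. The principal obstacle I anticipate is the bookkeeping of the variable exponent when integrating against $\lambda_1^{-1-\varepsilon}d\lambda_1$: one must repeatedly convert between $p(z)$, $p_0$, $p_0^i$, $p_1$ and $p_2$, invoking (\ref{rho-bound}), (\ref{9}) and (\ref{10}) to harmlessly exchange these exponents, so that the factor $\lambda_1^{p_2-p_1}$ and $\rho^{-(p_2-p_1)}$ never leak into the final estimate. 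A secondary delicate point is the time-derivative term: the estimate (\ref{5.7}) controls $\int(|v|^2-|v-\widetilde v|^2)$ from below by a loss that is exactly of the form that can be integrated against $\lambda_1^{-1-\varepsilon}d\lambda_1$ to give the last two terms on the right of (\ref{cac2}), so the sup-in-time quantity with the $m_{Q^{(4)}}^{-\varepsilon}$ weight emerges naturally from this integration once the level-set decomposition is done carefully.
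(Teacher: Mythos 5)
Your proposal follows essentially the same route as the paper: test the Steklov equation \eqref{11} with the Lipschitz truncation $\widetilde v$, split the time-derivative contribution into the boundary term at the slice $t$ (recovering $\tfrac12|v(\cdot,t)|^2$ on $E(\lambda_1)$ and the loss from \eqref{5.7} off it) and the cross term $\int\partial_t\widetilde v\cdot(\widetilde v-v)$ from \eqref{5.22}, control the diffusion and lower order terms off $E(\lambda_1)$ by \eqref{5.2}, and then integrate the resulting family of estimates against $\lambda_1^{-1-\varepsilon}\,d\lambda_1$ over $(c_E\widetildelambda,\infty)$ to manufacture the weight $m_{Q^{(4)}}^{-\varepsilon}$ and, via the boundedness of the strong maximal function and Fubini, the right-hand side of \eqref{cac2}. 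This is exactly the mechanism in the paper's proof.

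Two small corrections are in order. First, the passage from \eqref{cac2} to \eqref{cac1} does \emph{not} invoke the iteration Lemma~\ref{iteration lemma}: once you fix $\rho_1=\rho$, $\rho_2=16\rho$, the factor $\mu$ becomes a universal constant and the spurious term $c_1\mu\varepsilon\lambda^{1-\varepsilon}|Q^{(4)}|$ is absorbed into the left-hand side by a single direct choice of $\varepsilon$ small enough ($c_1\mu\varepsilon<1/2$), not by iterating over a family of shrinking cylinders. The iteration lemma is saved for Lemma~\ref{Estimates for the lower order terms}, where $\mu$ genuinely depends on $\rho_2-\rho_1$. Second, the term $\lambda^{1-\varepsilon}|Q^{(4)}|$ on the left of \eqref{cac2} does not come from a pointwise bound $m_{Q^{(4)}}\ge c\lambda$; rather, the paper introduces the sublevel set $E:=\{z\in Q^{(1)}:|Du|^{p(z)}\ge\varepsilon_1 m_{Q^{(4)}}(z)\}$, shows that $\int_{Q^{(1)}}|Du|^{p(\cdot)(1-\varepsilon)}dz$ is controlled from above by $c\varepsilon_1^{-\varepsilon}\,\mathrm{IV}_1$ plus manageable terms, and then reads off the lower bound by inserting hypothesis \eqref{a1}; the pointwise inequality $m_{Q^{(4)}}\ge c_E\widetildelambda\ge\lambda$ is used only to discard the weight in the upper bound of $\mathrm{IV}_2$. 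Neither slip changes the overall strategy, which you have reproduced faithfully.
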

\begin{proof}
From Lemma \ref{Lipschitz extension in space direction}, the function $\tilde{v}(\cdot,\tau)$ is a Lipschitz function for any fixed $\tau\in S_1$. We fix $t\in\Lambda^{(1)}$.
Let $t_1\in S_1\backslash\Lambda^{(1)}$ with $t_1< t$ and $0<\delta\ll1$.
In the Steklov formula (\ref{11}) we now choose $\varphi(\cdot,\tau)=\eta(\cdot)\chi_\delta(\tau)[\tilde{v}]_h(\cdot,\tau)$ as a test function, where
\begin{equation*}
\chi_\delta(\tau)=	\begin{cases}
	0 & \text{on}\ \ (-\infty,\ t_1+h]\cup[t-h,\ +\infty)\\
	1+\frac{\tau-t_1-h-\delta}{\delta}& \text{on}\ \ [t_1+h,\ t_1+h+\delta]
	\\ 1& \text{on}\ \ [t_1+h+\delta,\ t-h-\delta]
\\ 1-\frac{\tau-t+h+\delta}{\delta}& \text{on}\ \ [t-h-\delta,\ t-h]\end{cases}
\end{equation*}
to infer that
\begin{equation}\begin{split}\label{identity}
&\int_{B^{(4)}}\partial_\tau[u]_h\cdot \eta\chi_\delta[\tilde{v}]_h(\cdot,\tau)+ \langle [A(z,Du)]_h,\chi_\delta D\left([\tilde{v}]_h\eta\right)\rangle(\cdot,\tau) dx\\&=
-\int_{B^{(4)}} [B(z,Du)]_h\cdot \eta\chi_\delta[\tilde{v}]_h (\cdot,\tau) dx\end{split}\end{equation}
for any $\tau\in S_1$. For the first term on the left hand side we compute
\begin{align*}
	\partial_\tau [u]_h \chi_\delta\cdot [\tilde v]_h
	=
	\tfrac{1}{2}
	\partial_\tau\big(|[v]_{h}|^2 - |[\tilde v - v]_{h}|^2\big)\chi_\delta +
	\partial_\tau [\tilde v]_{h}\cdot [\tilde v - v]_{h}\chi_\delta .
\end{align*}
Integrating over $B^{(4)}\times(t_1,t)$ and using \cite[Lemma 2.10]{BDM} we write
\begin{align*}
	\int_{t_1}^t &\int_{B^{(4)}}
	\partial_\tau [u]_h \eta\chi_\delta\cdot [\tilde v]_h \,dx\, d\tau
=S_1(\delta,h)+S_2(\delta,h)+S_3(\delta,h)
\end{align*}
where
\begin{align*}
S_1(\delta,h)&=	\tfrac{1}{2}\int_{t_1}^t\int_{B^{(4)}}
	\partial_\tau\Big[\big(|[v]_{h}|^2 - |[\tilde v-v]_{h}|^2\big)\chi_\delta\Big]\eta dxd\tau
\\S_2(\delta,h)&=-
	\tfrac{1}{2}\int_{t_1}^t\int_{B^{(4)}}
	\big(|[v]_{h}|^2 - |[\tilde v-v]_{h}|^2\big)\partial_\tau\chi_\delta\eta dxd\tau
\\
	S_3(\delta,h)&=
	\int_{t_1}^t\int_{B^{(4)}\backslash E_\tau(\lambda_1)}
	\big[\partial_\tau [\tilde v]_{h}\chi_\delta\big]_{-h}\cdot (\tilde v-v)\eta dxd\tau.
\end{align*}
Our next aim is to determine the limitation of $S_1(\delta,h)$, $S_2(\delta,h)$ and $S_3(\delta,h)$ as $\delta,h\downarrow 0$. We first observe that $S_1(\delta,h)=0$. To estimate $S_2(\delta,h)$, we have
\begin{equation}\begin{split}\label{E2}
S_2(\delta,h)&\to\tfrac{1}{2}\int_{B^{(4)}\times\{t\}}
	\big(|v|^2 - |\tilde v-v|^2\big)\eta dx-\tfrac{1}{2}\int_{B^{(4)}\times\{t_1\}}
	\big(|v|^2 - |\tilde v-v|^2\big)\eta dx\\&=:I-II
\end{split}\end{equation}
as $\delta,h\downarrow 0$ for a.e. $t,t_1$.
We now turn our attention to the estimate of $S_3(\delta,h)$. Define $Q_t=B^{(4)}\times(t_1,t)$, we observe that the set $Q_t\backslash E(\lambda_1)$ is open. This implies that for any $z\in Q_t\backslash E(\lambda_1)$, $\big[\partial_\tau [\tilde v]_{h}\chi_\delta\big]_{-h}\cdot (\tilde v-v)\eta(z)\to\partial_t\tilde v\chi_\delta\cdot (\tilde v-v)\eta(z)$ as $h\downarrow 0$. Furthermore, we will ensure that $$\left|\big[\partial_\tau [\tilde v]_{h}\chi_\delta\big]_{-h}\cdot (\tilde v-v)\eta(z)\right|\leq cF(z),$$ where $F(z)$ is defined by
$$F(z)=\sum_{i\in
\Theta}|v(z)-\tilde v(z)|\sup_{2Q_i\cap Q^{(4)}}|\partial_t\tilde v|\chi_{Q_i}+
\sum_{i\in\Theta_1}r_i^{-1}|v(z)-\tilde v(z)|\chi_{Q_i},$$
To this aim we define $N_h=\{i\in \mathbb{N}:h<r_i^2\}$ and decompose the term under consideration as follows
\begin{equation*}\begin{split}\left|\big[\partial_\tau [\tilde v]_{h}\chi_\delta\big]_{-h}\cdot (\tilde v-v)\eta\right|&\leq\sum_{i\in\Theta\cap N_h:Q_i\cap Q_t\neq\emptyset}\left|\big[\partial_\tau [\tilde v]_{h}\chi_\delta\big]_{-h}\cdot (\tilde v-v)\eta\right|\chi_{Q_i}\\&+\sum_{i\in\Theta\backslash N_h:Q_i\cap Q_t\neq\emptyset}\left|\big[\partial_\tau [\tilde v]_{h}\chi_\delta\big]_{-h}\cdot (\tilde v-v)\eta\right|\chi_{Q_i}\\&=:F_1(z)+F_2(z),\end{split}\end{equation*}
with the obvious meaning of $F_1(z)$ and $F_2(z)$. In the case $i\in\Theta\cap N_h$, we find that
$$\sup_{Q_i\cap Q_t\neq\emptyset}\left|\big[\partial_\tau [\tilde v]_{h}\chi_\delta\big]_{-h}\right|\leq\sup_{z\in2Q_i\cap Q^{(4)}}|\partial_t\tilde v|,$$
which implies $F_1(z)\leq F(z)$.
Next, we may assume that $h<\tfrac{1}{3}(\rho_2-\rho_1)^2$. Then for any $i\in\Theta\backslash N_h$ we have
$i\in\Theta_1$. Using the formula for the time derivative of Steklov averages and Lemma \ref{Lipschitz extension in space direction} (\ref{lipschitz}) to find for $i\in\Theta\backslash N_h$ and $z\in Q_t\cap Q_i$ that
\begin{equation*}\begin{split}|\partial_\tau [\tilde v]_{h}(z)|=\frac{|\tilde v(x,t+h)-\tilde v(x,t)|}{h}\leq \frac{K}{\sqrt{h}}\leq
Kr_i^{-1},
 \end{split}\end{equation*}
which shows that $F_2(z)\leq KF(z)$. It remains to prove that $F(z)$ is an integrable function.
We now use Lemma \ref{sobolev ineuality 3 whitney cubes} (\ref{5.1}), (\ref{5.4}), (\ref{5.5}), (\ref{5.6}) and Tonelli's theorem to get
\begin{equation*}\begin{split}\int_{Q^{(4)}}|F(z)|dz&\leq C\sum_{i\in
\Theta_2}\int_{Q_i\cap Q^{(4)}}|v(z)-\tilde v(z)|dz+
C\sum_{i\in\Theta_1}r_i^{-1}\int_{Q_i\cap Q^{(4)}}|v(z)-\tilde v(z)|dz
\\ &\leq \|v\|_{L^1(Q^{(4)})}+
C|Q^{(4)}|<+\infty,
\end{split}\end{equation*}
where the constant $C$ depends on $n,\lambda_1,\lambda,\rho_1,\rho_2,p_1$ and $p_2$, but independent of $h$. This shows that $F(z)$ is an integrable function and we are allowed to use Lebesgue's dominated convergence theorem to obtain
\begin{equation}\begin{split}\label{E3}\lim_{\delta\downarrow 0}\lim_{h\downarrow 0}S_3(\delta,h)=\int_{Q_t\backslash E(\lambda_1)}
\partial_t\tilde v\cdot (\tilde v-v)\eta dxdt=:III.\end{split}\end{equation}
We start with the estimate of $\uppercase\expandafter{\romannumeral2}$. We choose $t_1\in S_1\backslash\Lambda^{(1)}$ and $t>t_1$ such that
$$\uppercase\expandafter{\romannumeral2}=\frac{1}{|S_1\backslash\Lambda^{(1)}|}
\int_{S_1\backslash\Lambda^{(1)}}
\int_{B^{(4)}}(|v|^2-|\widetilde{v}-v|^2)(z)dz.$$
From Lemma \ref{sobolev ineuality 3 whitney cubes} (\ref{5.1}) and the fact that $|v|\leq c(u-u_{Q^{(1)}})$, we proceed to estimate $\uppercase\expandafter{\romannumeral2}$ as follows
\[\begin{split}\uppercase\expandafter{\romannumeral2}\leq \frac{c}{s}\int_{Q^{(3)}}|v|^2dz+\frac{c}{s}\int_{Q^{(3)}\backslash E(\lambda_1)}(|v|^2+|\widetilde{v}|^2)dz\leq \frac{c}{s}\int_{Q^{(3)}}|u-u_{Q^{(1)}}|^2dz.\end{split}\]
To deal with $\uppercase\expandafter{\romannumeral3}$, we use Lemma \ref{5.1} (\ref{5.22}) to obtain
\begin{equation*}
\uppercase\expandafter{\romannumeral3}
\leq\int_{B^{(4)}\times S_1}|\partial_t\widetilde{v}\cdot(\widetilde{v}-v)| dz\leq
c\lambda_1|Q^{(4)}\backslash E(\lambda_1)|+\frac{c}{s}\int_{Q^{(4)}}|v|^{2}dz.
\end{equation*}
Next, we integrating the remaining terms of (\ref{identity}) with respect to the time variable over $(t_1,t)$ and pass to the limit $ h\downarrow 0$ and $\delta\downarrow 0$. We decompose the domain of integration into the sets $Q^{(4)}\backslash E(\lambda_1)$ and $E(\lambda_1)$ to obtain
\begin{equation*}\begin{split}&\int_{t_1}^t\int_{B^{(4)}}\langle A(z,Du),D(\widetilde{v}\eta)\rangle dxdt+\int_{t_1}^t\int_{B^{(4)}}\langle B(z,Du),\widetilde{v}\eta\rangle dxdt
\\&= \int_{Q_t\cap E(\lambda_1)}\cdots dz+ \int_{Q_t\backslash E(\lambda_1)}\langle A(z,Du),D(\widetilde{v}\eta)\rangle+\langle B(z,Du),\widetilde{v}\eta\rangle dz\\&
=:\uppercase\expandafter{\romannumeral4}
+\uppercase\expandafter{\romannumeral5}.\end{split}\end{equation*}
We now use the growth condition (\ref{4}) and Lemma \ref{sobolev ineuality 3 whitney cubes} (\ref{5.2}) to conclude that
\begin{equation*}\begin{split}\uppercase\expandafter{\romannumeral5}&\leq c\int_{Q^{(4)}\backslash E(\lambda_1)}(1+|Du|+|F|)^{p(z)-1}(\lambda^{(2-p_0)/(2p_0)}(\rho_2-\rho_1)^{-1}|\widetilde{v}|+|D\widetilde{v}|)dz\\&\leq c\mu
\lambda_1|Q^{(4)}\backslash E(\lambda_1)|+cs^{-1}\int_{Q^{(3)}}|u-u_{Q^{(1)}}|^2dz,\end{split}\end{equation*}
which is bounded uniformly with respect to $t\in\Lambda^{(1)}$.
From the estimates above, we conclude that
\begin{equation}\begin{split}\label{iiiivv}\uppercase\expandafter{\romannumeral1}+
\uppercase\expandafter{\romannumeral4}=\uppercase\expandafter{\romannumeral2}-
\uppercase\expandafter{\romannumeral3}-
\uppercase\expandafter{\romannumeral5}\leq c\mu
\lambda_1|Q^{(4)}\backslash E(\lambda_1)|+cs^{-1}\int_{Q^{(3)}}|u-u_{Q^{(1)}}|^2dz\end{split}\end{equation}
for a.e. $t\in\Lambda^{(1)}$.

On the other hand we need to estimate the lower bound for (\ref{iiiivv}).
We first observe from Lemma \ref{sobolev ineuality 3 whitney cubes} (\ref{5.7}) that
\begin{equation*}\begin{split}\uppercase\expandafter{\romannumeral1}\geq -c\mu\lambda_1|Q^{(4)}\backslash E(\lambda_1)|-
\tfrac{c\mu}{s}\int_{Q^{(4)}}|u-u_{Q^{(1)}}|^2dz+\tfrac{1}{2}
\int_{E(\lambda_1)\times\{t\}}|v|^2dx\end{split}\end{equation*}
and the estimate (\ref{iiiivv}) can be rewritten as
\begin{equation}\begin{split}\label{iiv}\tfrac{1}{2}\int_{E(\lambda_1)\times\{t\}}|v|^2dx\leq -IV+c\mu\lambda_1|Q^{(4)}\backslash E(\lambda_1)|+c\mu s^{-1}\int_{Q^{(4)}}|u-u_{Q^{(1)}}|^2dz.\end{split}\end{equation}
We multiply both sides by $\lambda_1^{-1-\varepsilon}$ and integrate over $(c_E\widetilde{\lambda},+\infty)$ with respect to $\lambda_1$. Let $m_{Q^{(4)}}(z):=\max\{c_E\widetilde{\lambda},\ M_{Q^{(4)}}^{\frac{1}{1-\varepsilon}}(z)\}$ and $s_1=\rho_1^2$. Multiplying both sides by $\epsilon$, we get the following estimate,
\begin{equation*}\begin{split}\tfrac{1}{2}\int_{B^{(4)}}
&|v|^2(\cdot,t)m_{Q^{(4)}}
(\cdot,t)^{-\varepsilon}dx\\&\leq-
\int_{B^{(4)}\times(t_1,t_0+s_1)}(\langle A(z,Du),D(v\eta)\rangle+\langle B(z,Du),v\eta\rangle)m_{Q^{(4)}}(z)^{-\varepsilon}dz
\\&+c\mu\epsilon\int_{c_E\widetilde{\lambda}}^\infty\lambda_1^{-\varepsilon}\bigl|
\bigl\{z\in Q^{(4)}:\ M_{Q^{(4)}}(z)>\lambda_1^{1-\varepsilon}\bigr\}\bigr|d
\lambda_1
+\tfrac{c\mu}{s_1\widetilde\lambda^\varepsilon}
\int_{Q^{(4)}}|u-u_{Q^{(1)}}|^2dz\\&
=:-\uppercase\expandafter{\romannumeral6}
+\epsilon\uppercase\expandafter{\romannumeral7}+
\uppercase\expandafter{\romannumeral8},\end{split}
\end{equation*}
with the obvious meaning of $\uppercase\expandafter{\romannumeral6}$, $\uppercase\expandafter{\romannumeral7}$ and $\uppercase\expandafter{\romannumeral8}$. Since $\widetilde\lambda\geq\lambda$, it follows that
$VIII\leq c\mu s^{-1}\lambda^{-\varepsilon}
\int_{Q^{(4)}}|u-u_{Q^{(1)}}|^2dz$. Next, we use the assumption (\ref{a2}), Fubini theorem and the boundedness of strong maximal functions to infer that
\begin{equation*}\begin{split}\uppercase\expandafter{\romannumeral7}&\leq c\mu\int_{Q^{(4)}}\left(\left|\frac{u-u_{Q^{(1)}}}{\lambda^{(p_0-2)/(2p_0)}\rho}\right|+|Du|+|F|+1\right)^{p(z)(1-\varepsilon)}dz\\&\leq c\mu\int_{Q^{(4)}}\left|\frac{u-u_{Q^{(1)}}}{\lambda^{(p_0-2)/(2p_0)}\rho}\right|^{p(z)
(1-\varepsilon)}dz+c\mu\lambda^{1-\varepsilon}|Q^{(4)}|.\end{split}\end{equation*}
To estimate $\uppercase\expandafter{\romannumeral6}$, we note that $D(v\eta)(x,t)=\eta^2(x) Du(x,t)+v(x,t)D\eta(x)$ for any $t\in S_1$. Using the ellipticity and growth conditions (\ref{4}), we obtain
\begin{equation*}\begin{split}\uppercase\expandafter{\romannumeral6}&\geq \int_{B^{(4)}\times(t_1,t_0+s_1)}\frac{\langle A(z,Du),\eta^2Du\rangle-|\langle A(z,Du),vD\eta\rangle|-|\langle B(z,Du),v\eta\rangle|}{m_{Q^{(4)}}(z)^{\varepsilon}}dz\\&\geq \nu\int_{Q^{(1)}} |Du|^{p(z)}m_{Q^{(4)}}(z)^{-\varepsilon}dz-\int_{B^{(4)}
\times(t_1,t_0+s_1)}|F|^{p(z)}m_{Q^{(4)}}(z)^{-\varepsilon}dz
\\&\qquad-\frac{c\lambda^{\frac{2-p_0}{2p_0}}}{\rho_2-\rho_1}
\int_{B^{(4)}\times(t_1,t_0+s_1)}(1+|F|+|Du|)^{p(z)-1}|u-u_{Q^{(1)}}
|m_{Q^{(4)}}(z)^{-\varepsilon}dz
\\&:=\uppercase\expandafter{\romannumeral4}_1
-\uppercase\expandafter{\romannumeral4}_2-
\uppercase\expandafter{\romannumeral4}_3.
\end{split}\end{equation*}
with the obvious meaning of $\uppercase\expandafter{\romannumeral4}_1$, $\uppercase\expandafter{\romannumeral4}_2$ and $\uppercase\expandafter{\romannumeral4}_3$.
We first observe from the definition of $m_{Q^{(4)}}(z)$ that
$\uppercase\expandafter{\romannumeral4}_2\leq \int_{Q^{(4)}}|F|^{p(z)(1-\varepsilon)}dz.$
To estimate $\uppercase\expandafter{\romannumeral4}_1$, we introduce the set
$$E:=\left\{z\in Q^{(1)}:|Du|^{p(z)}\geq \varepsilon_1m_{Q^{(4)}}(z)\right\}$$
for some $0<\varepsilon_1<1$ to be determined later. For the estimate on $E$, we have
\begin{equation*}\begin{split}\int_E|Du|^{p(z)(1-\varepsilon)}dz\leq \varepsilon_1^{-\varepsilon}\int_E |Du|^{p(z)}m_{Q^{(4)}}(z)^{-\varepsilon}dz\leq c\varepsilon_1^{-\varepsilon}\uppercase\expandafter{\romannumeral4}_1.\end{split}\end{equation*}
For $z\in Q^{(1)}\backslash E$, we see that either $|Du|^{p(z)}\leq \varepsilon_1M_{Q^{(4)}}(z)^{\frac{1}{1-\varepsilon}}$ or $|Du|^{p(z)}\leq c_E\varepsilon_1\widetilde\lambda$. Then we use (\ref{etazeta}) and (\ref{a2}) to obtain
\begin{equation*}\begin{split}\int_{Q^{(1)}\backslash E}|Du|^{p(z)(1-\varepsilon)}dz&\leq c\varepsilon_1^{1-\varepsilon}\int_{Q^{(1)}}(M_{Q^{(4)}}(z)+
\widetilde\lambda^{1-\varepsilon})dz\\&\leq c\varepsilon_1^{1-\varepsilon}\lambda^{1-\varepsilon}|Q^{(4)}|
+c\varepsilon_1^{1-\varepsilon}\int_{Q^{(4)}}
\left|\frac{u-u_{Q^{(1)}}}{\lambda^{\frac{p_0-2}{2p_0}}\rho}\right|^{p(z)(1-\varepsilon)}dz.
\end{split}\end{equation*}
Summing these two estimates and multiply $\varepsilon_1^{\varepsilon}$ on both sides, we find that
\begin{equation*}\begin{split}\varepsilon_1^{\varepsilon}\int_{Q^{(4)}}
|Du|^{p(z)(1-\varepsilon)}dz\leq c\uppercase\expandafter{\romannumeral4}_1+c\varepsilon_1
\lambda^{1-\varepsilon}|Q^{(4)}|+c\varepsilon_1
\int_{Q^{(4)}}
\left|\frac{u-u_{Q^{(1)}}}{\lambda^\frac{p_0-2}{2p_0}\rho}\right|^{p(z)(1-\varepsilon)}dz.
\end{split}\end{equation*}
From the assumption (\ref{a1}) and we can choose $\varepsilon_1$ small enough to reabsorb the term $c\varepsilon_1
\lambda^{1-\varepsilon}|Q^{(4)}|$ to the left hand side. Then the term $\uppercase\expandafter{\romannumeral4}_1$ can be bounded from below,
\begin{equation*}\begin{split}\uppercase\expandafter{\romannumeral4}_1\geq c\lambda^{1-\varepsilon}|Q^{(4)}|-c\int_{Q^{(4)}}
\left(\left|\frac{u-u_{Q^{(1)}}}{\lambda^{\frac{p_0-2}{2p_0}}\rho}\right|+|F|+1\right)^{p(z)(1-\varepsilon)}dz.
\end{split}\end{equation*}
Now we come to the estimate of $\uppercase\expandafter{\romannumeral4}_3$. We observe from Young's inequality with $r=p(z)(1-\varepsilon)$
and $r^\prime=\frac{p(z)(1-\varepsilon)}{p(z)(1-\varepsilon)-1}$ that
\[\begin{split}\left|\frac{u-u_{Q^{(1)}}}{\lambda^{\frac{p_0-2}{2p_0}}(\rho_2-\rho_1)}\right|&
(1+|F|+|Du|)^{p(z)(1-\varepsilon)-1}\\&\leq \varepsilon_2(1+|F|+|Du|)^{p(z)(1-\varepsilon)}
+c(\varepsilon_2)\left|\frac{u-u_{Q^{(1)}}}
{\lambda^{\frac{p_0-2}{2p_0}}(\rho_2-\rho_1)}\right|^{p(z)(1-\varepsilon)}.\end{split}\]
Combining this estimate with the assumption (\ref{a2}) and the definition of $m_{Q^{(4)}}(z)$, we obtain
\[\begin{split}\uppercase\expandafter{\romannumeral4}_3&\leq c\lambda^{\frac{2-p_0}{2p_0}}(\rho_2-\rho_1)^{-1}
\int_{Q^{(4)}}(1+|F|+|Du|)^{p(z)(1-\varepsilon)-1}|u-u_{Q^{(1)}}|dz\\& \leq c_2\varepsilon_2\lambda^{1-\varepsilon}|Q^{(4)}|+
c\mu\int_{Q^{(4)}}\left|\frac{u-u_{Q^{(1)}}}{\lambda^{\frac{p_0-2}{2p_0}}\rho_2}\right|^{p(z)(1-\varepsilon)}dz.\end{split}\]
From the above estimate we arrive at
\begin{equation*}\begin{split}\lambda^{1-\varepsilon}|Q^{(4)}|&
+\sup_{t\in\Lambda^{(1)}}\int_{B^{(1)}}
|u-u_{Q^{(1)}}|^2m_{Q^{(4)}}^{-\varepsilon}(\cdot,t)dx\\ &\leq c\mu\int_{Q^{(4)}}
\left|\frac{u-u_{Q^{(1)}}}{\lambda^{\frac{p_0-2}{2p_0}}\rho_2}
\right|^{p(\cdot)(1-\varepsilon)}dz+c\mu\lambda^{-\varepsilon}
\int_{Q^{(4)}}\left|\frac{u-u_{Q^{(1)}}}{\rho_2}\right|^2dz\\&
+c\int_{Q^{(4)}}(1+|F|)^{p(\cdot)(1-\epsilon)}dz
+c_1\mu\varepsilon\lambda^{1-\varepsilon}|Q^{(4)}|.
\end{split}\end{equation*}
Moreover, in the special case $\rho_1=\rho$ and $\rho_2=16\rho$ we have $\mu\equiv \mathrm{constant}$. This allows us to choose $\varepsilon$ small enough to obtain
\begin{equation*}\begin{split}&\lambda^{1-\varepsilon}|Q^{(4)}|
+\sup_{t\in\Lambda}\int_{B}
|u-u_{Q}|^2m_{Q^{(4)}}^{-\varepsilon}(\cdot,t)dx\\ &\leq c\int_{Q^{(4)}}\left|\frac{u-u_{Q}}{\lambda^{\frac{p_0-2}{2p_0}}\rho}
\right|^{p(\cdot)(1-\varepsilon)}dz+c\lambda^{-\varepsilon}
\int_{Q^{(4)}}\left|\frac{u-u_{Q}}{\rho}\right|^2dz+c\int_{Q^{(4)}}(1+|F|)^{p(\cdot)(1-\epsilon)}dz,
\end{split}\end{equation*}
which proves the theorem.
\end{proof}
\section{Reverse-H\"older type inequality}
This section is intended to prove the reverse H\"older inequality under an additional assumption. Firstly, it is necessary to establish an estimate for the lower order terms which play a crucial role in the proof of the main result in this section.
\begin{lemma}\label{Estimates for the lower order terms} Let $M\geq 1$ be fixed. Then there exists $\rho_0=\rho_0(n,L,M)>0$ such that the following holds: Suppose that $u$ is a very weak solution to the parabolic system (\ref{5}) and satisfies the assumptions of Theorem \ref{main theorem}. Assume that for some parabolic cylinder $Q_{32\rho}^{(\lambda)}(z_0)\subset\Omega_T$ with $0<32\rho\leq\rho_0$ the following intrinsic coupling holds:
\begin{equation}\begin{split}\label{caass1}
\lambda^{1-\varepsilon}
\le
\fint_{Q_\rho^{(\lambda)}(z_0)}\left(|Du|+|F|+1\right)&^{p(z)(1-\varepsilon)}dz\end{split}\end{equation}
and
\begin{equation}\begin{split}\label{caass2}\fint_{Q_{16\rho}^{(\lambda)}(z_0)}\left(|Du|+|F|+1\right)^{p(z)(1-\varepsilon)}dz\le \lambda^{1-\varepsilon}.
\end{split}\end{equation}
Then there holds:
$$\fint_{Q_{4\rho}^{(\lambda)}(z_0)}\left|\frac{u-u_{Q_{4\rho}^{(\lambda)}(z_0)}}{4\rho}\right|^2 dz\leq c\lambda,$$
where the constant $c$ depends on $n,N,\gamma_1,\nu$ and $L$.
\end{lemma}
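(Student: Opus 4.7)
The goal is to upgrade the natural $L^{p(\cdot)(1-\varepsilon)}$-control on $Du$ supplied by the intrinsic scaling hypothesis \eqref{caass2} to an $L^2$-control on $u - u_{Q_{4\rho}^{(\lambda)}(z_0)}$. My plan is to combine the parabolic Poincar\'e-type estimate from Lemma \ref{sobolev2}, the Caccioppoli inequality \eqref{cac1} just proved in Theorem \ref{Caccioppoli}, and the Gagliardo-Nirenberg inequality from Lemma \ref{Gagliardo-Nirenberg}.

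First, since the hypothesis \eqref{caass2} restricts to any concentric subcylinder of $Q_{16\rho}^{(\lambda)}(z_0)$ up to a universal measure-ratio constant, Lemma \ref{sobolev2} is applicable on $Q_{16\rho}^{(\lambda)}(z_0)$ with $s=(16\rho)^2$ and exponent $\theta = p_1(1-\varepsilon)$, where $p_1 := \inf_{Q_{16\rho}^{(\lambda)}(z_0)} p(\cdot)$. For $\varepsilon$ small we have $\theta \geq \tfrac{2n}{n+2}$ since $\gamma_1 > \tfrac{2n}{n+2}$, and the lemma yields the low-integrability Poincar\'e estimate
$$\fint_{Q_{16\rho}^{(\lambda)}(z_0)} \bigl|u - u_{Q_{16\rho}^{(\lambda)}(z_0)}\bigr|^{p_1(1-\varepsilon)}\,dz \leq c\,\rho^{p_1(1-\varepsilon)}\,\lambda^{p_1(1-\varepsilon)/2}.$$
Together with an elementary triangle inequality controlling $|u_{Q_{16\rho}^{(\lambda)}(z_0)}-u_Q|$, this bounds the first term on the right-hand side of \eqref{cac1} by $c\lambda^{1-\varepsilon}|Q_{16\rho}^{(\lambda)}|$, while the third term there is bounded directly by \eqref{caass2}.

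The remaining term $c\lambda^{-\varepsilon}\int_{Q_{16\rho}^{(\lambda)}}|u-u_Q|^2/\rho^2\,dz$ on the right-hand side of \eqref{cac1} has the same form as the quantity being estimated, creating circular dependence. I would resolve this by Gagliardo-Nirenberg interpolation applied slice-wise in time with $\sigma = 2$, $q = p_1(1-\varepsilon) \geq \tfrac{2n}{n+2}$, and a suitable $r$, which expresses $\fint_B|u-u_Q|^2/\rho^2\,dx$ as a product of the $L^{p_1(1-\varepsilon)}$-norm of $|Du|$ (controlled by $\lambda^{1-\varepsilon}$ via \eqref{caass2}) and a sup-in-time $L^2$-factor. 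After integration in time, H\"older's inequality, and using $m_{Q_{16\rho}^{(\lambda)}}^{-\varepsilon} \geq c\lambda^{-\varepsilon}$ on the ``good'' set where $m_{Q_{16\rho}^{(\lambda)}} \leq C\lambda$ (with the complementary ``bad'' set $\{m_{Q_{16\rho}^{(\lambda)}} > C\lambda\}$ whose measure is controlled by Chebyshev together with the $L^{\widetilde{q}(1-\varepsilon)}$-boundedness of the strong maximal function $M$), the problematic $L^2$-integral is absorbed into the left-hand side of \eqref{cac1}. Finally, since
$$\fint_{Q_{4\rho}^{(\lambda)}(z_0)}\left|\frac{u - u_{Q_{4\rho}^{(\lambda)}(z_0)}}{4\rho}\right|^2 dz \leq \sup_{t\in\Lambda_{4\rho}}\fint_{B_{4\rho}^{(\lambda)}}\left|\frac{u - u_{Q_{4\rho}^{(\lambda)}(z_0)}}{4\rho}\right|^2(\cdot,t)\,dx,$$
the sup-in-time $L^2$-estimate obtained in the previous step, together with a triangle inequality to pass from $u-u_Q$ to $u-u_{Q_{4\rho}^{(\lambda)}(z_0)}$, yields the claim.

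The main technical obstacle is the absorption of the circular $L^2$-term on the right-hand side of \eqref{cac1}. This relies on a delicate interplay between the small factor $\lambda^{-\varepsilon}$, the Sobolev margin $p_1(1-\varepsilon) \geq \tfrac{2n}{n+2}$ (which holds thanks to the assumption $\gamma_1 > \tfrac{2n}{n+2}$), and the smallness of $\varepsilon$, which together allow the Gagliardo-Nirenberg interpolation to close the estimate; this is precisely the step where the subquadratic regime demands a different treatment than the superquadratic case of \cite{bl}.
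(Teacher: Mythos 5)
Your proposal correctly identifies the essential ingredients---slice-wise Gagliardo--Nirenberg interpolation, the Caccioppoli inequality from Theorem~\ref{Caccioppoli}, H\"older's inequality against the weight $m_{Q^{(4)}}^{-\varepsilon}$, and the boundedness of the strong maximal function---and it correctly diagnoses the circular dependence between the quantity to be estimated and the $L^2$-term on the right-hand side of the Caccioppoli inequality. However, the absorption mechanism you propose to resolve that circularity does not work, and this is a genuine gap.

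The difficulty is a domain mismatch. In \eqref{cac1} (with $\rho_1=\rho$, $\rho_2=16\rho$), the left-hand side controls
$$\sup_{t\in\Lambda_\rho}\int_{B_\rho^{(\lambda)}}|u-u_Q|^2\,m_{Q^{(4)}}^{-\varepsilon}(\cdot,t)\,dx,$$
a supremum over the \emph{inner} slab $B_\rho^{(\lambda)}\times\Lambda_\rho$, whereas the troublesome term $c\lambda^{-\varepsilon}\int_{Q^{(4)}}|u-u_Q|^2/\rho^2\,dz$ on the right is an integral over the \emph{outer} cylinder $Q^{(4)}=Q_{16\rho}^{(\lambda)}(z_0)$. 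Applying Gagliardo--Nirenberg slice-wise to that outer integral (with $\sigma=2$) and then H\"older against the weight produces a factor of
$$\sup_{t\in\Lambda_{16\rho}}\int_{B_{16\rho}^{(\lambda)}}|u-u_Q|^2\,m_{Q^{(4)}}^{-\varepsilon}(\cdot,t)\,dx,$$
which lives on the \emph{outer} slab and therefore cannot be reabsorbed into the left-hand side of \eqref{cac1}. Your good-set/bad-set decomposition of $m_{Q^{(4)}}$ is orthogonal to this point. Rescaling \eqref{cac1} to base radius $16\rho$ would demand $Q_{512\rho}^{(\lambda)}(z_0)\subset\Omega_T$, which the hypothesis of the lemma does not grant.

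The paper avoids this by invoking the $\mu$-weighted Caccioppoli inequality \eqref{cac2} for arbitrary nested radii $\rho\le\rho_1<\rho_2\le16\rho$, accepting the degenerating factor $\mu=(\rho/(\rho_2-\rho_1))^\beta$. Writing
$$\phi(r):=\fint_{Q_r^{(\lambda)}(z_0)}\left|\frac{u-u_{Q_r^{(\lambda)}(z_0)}}{\lambda^{(p_0-2)/(2p_0)}r}\right|^2dz,$$
one combines Gagliardo--Nirenberg (with the exponent $\tilde p_1=p_1(2-4\epsilon)/(2-\epsilon p_1)$, chosen so that $2/\tilde p_1\le 1+2(1-\varepsilon)/n$ holds thanks to $\gamma_1>2n/(n+2)$), H\"older against the maximal-function weight, \eqref{cac2}, and Young's inequality on the six resulting terms to obtain $\phi(\rho_1)\le\tfrac12\phi(\rho_2)+c\mu\lambda^{2/p_0}$ for all such $\rho_1<\rho_2$. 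It is the iteration Lemma~\ref{iteration lemma}, applied to this family of estimates, that closes the argument---not a one-shot absorption into \eqref{cac1}. That iteration over nested radii, and with it the use of \eqref{cac2} in place of \eqref{cac1}, is the step missing from your proposal.
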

\begin{proof}We define the exponent
$\tilde p_1:=p_1(2-4\epsilon)/(2-\varepsilon p_1)$ as in the proof of \cite[Proposition 7.1]{bl}
and we want to apply Gagliardo-Nirenberg's inequality from Lemma \ref{Gagliardo-Nirenberg} with $(2,q,r,\theta)$ replaced by $(2,\tilde p_1,2(1-\varepsilon),\tilde p_1/2)$. This will be allowed, once we can ensure that
$2/\tilde p_1\leq 1+2(1-\varepsilon)/n$ holds true.
In order to check this condition, we introduce a function $$\Phi_{n,\gamma_1}(\epsilon)=\frac{\gamma_1(2-4\epsilon)}{2-\epsilon \gamma_1}-\frac{2n}{n+2-2\epsilon}$$ and observe that
the function $\Phi_{n,\gamma_1}(\epsilon)$ is continuous on the interval $(0,1/2)$. Notice that $\Phi_{n,\gamma_1}(0)>0$, since $\gamma_1>2n/(n+2)$. Then there exists a constant $\epsilon_0=\epsilon_0(n,\gamma_1)$ such that for any $\epsilon$ with $0<\epsilon<\epsilon_0$, there holds $\Phi_{n,\gamma_1}(\epsilon)>0$. From this we deduce that
$$\frac{\gamma_1(2-4\epsilon)}{2-\epsilon \gamma_1}\geq\frac{\gamma_1(2-4\epsilon)}{2-\epsilon \gamma_1}>\frac{2n}{n+2-2\epsilon},$$
which implies $2/\tilde p_1\leq 1+2(1-\varepsilon)/n$. We set $\rho_\lambda^{(1)}=\lambda^{(p_0-2)/(2p_0)}\rho_1$ and $\rho_\lambda^{(2)}=\lambda^{(p_0-2)/(2p_0)}\rho_2$. We see that $\rho_\lambda^{(1)}$ and $\rho_\lambda^{(2)}$ are the radius of $B^{(1)}$ and $B^{(2)}$ respectively.
Applying Lemma \ref{Gagliardo-Nirenberg} with $(2,q,r,\theta)$ replaced by $(2,\tilde p_1,2-2\varepsilon,\tilde p_1/2)$ slice-wise to $(u-u_{Q^{(1)}})(\cdot,t)$ we obtain
\begin{equation*}\begin{split}\label{Gagliardo}
\phi(\rho_1)&=
\fint_{Q^{(1)}}\left|\frac{u-u_{Q^{(1)}}}
{\rho_\lambda^{(1)}}\right|^2 dz \\
&\leq c\fint_{\Lambda^{(1)}}\fint_{B^{(1)}}
\left(\left|\frac{u-u_{Q^{(1)}}}{\rho_\lambda^{(1)}}\right|^{\tilde p_1}+|Du|^{\tilde p_1}dx\right)
\left(\fint_{B^{(1)}}\left|\frac{u-u_{Q^{(1)}}}{\rho_\lambda^{(1)}}\right|
^{2-2\varepsilon}dx\right)^{\frac{\sigma-\tilde p_1}{2(1-\varepsilon)}} dt,\end{split}\end{equation*}
where$$\phi(r)=
\fint_{Q_r^{(\lambda)}}\left|\frac{u-u_{Q_r^{(\lambda)}}}
{\lambda^{(p_0-2)/2p_0}r}\right|^2 dz.$$
Next, making use of the assumptions \eqref{caass1} and \eqref{caass2}, we can apply the Caccioppoli inequality from Theorem \ref{Caccioppoli inequality} and proceed similarly as
\cite[page 215]{bl} to obtain
\begin{equation}\begin{split}\label{phirho}\phi(\rho_1)\leq I+II+III+IV+V+VI,\end{split}\end{equation}
where
\[\begin{split}I&= c\mu\lambda^{1-\varepsilon+(\frac{2}{p_0}-\varepsilon)\frac{2-\tilde p_1}{2}},
\\II&=c\mu\lambda^{1-\varepsilon-\varepsilon\frac{2-\tilde p_1}{2}}
\left(\fint_{Q^{(4)}}\left|\frac{u-u_{Q^{(1)}}}{\rho_\lambda^{(2)}}
\right|^2dz\right)^{\frac{2-\tilde p_1}{2}},
\\III&=c\mu\lambda^{1-\varepsilon+\frac{2-p_0}{p_0}\frac{2-\tilde p_1}{2}}\left(\fint_{Q^{(4)}}
\left|\frac{u-u_{Q^{(1)}}}{\rho_\lambda^{(2)}}
\right|^2dz\right)^{\frac{(2-\tilde p_1)p_2(1-\epsilon)}{4}},
\\IV&=c\mu\lambda^{\frac{1-\varepsilon}{r^\prime}
+(\frac{2}{p_0}-\varepsilon)\frac{2-\tilde p_1}{2}}\left(\fint_{Q^{(4)}}
\left|\frac{u-u_{Q^{(1)}}}{\rho_\lambda^{(2)}}
\right|^2dz\right)^{\frac{\epsilon p_2(2-\tilde p_1)}{4}},
\\V&=
c\mu\lambda^{\frac{1-\varepsilon}{r^\prime}+\frac{2-p_0}{p_0}\frac{2-\tilde p_1}{2}}\left(\fint_{Q^{(4)}}
\left|\frac{u-u_{Q^{(1)}}}{\rho_\lambda^{(2)}}
\right|^2dz\right)^{\frac{p_2(2-\tilde p_1)}{4}},
\\VI&=c\mu\lambda^{\frac{1-\varepsilon}{r^\prime}-\varepsilon\frac{2-\tilde p_1}{2}}
\left(\fint_{Q^{(4)}}\left|\frac{u-u_{Q^{(1)}}}{\rho_\lambda^{(2)}}
\right|^2dz\right)^{\frac{(2+\epsilon p_2)(2-\tilde p_1)}{4}},\end{split}\]
and the exponents $r$ and $r^\prime$ are defined by
$$r=\frac{2(1-\epsilon)}{\epsilon(2-\tilde p_1)}\qquad\text{and}\qquad r^\prime=\frac{2(1-\epsilon)}{2(1-\epsilon)-\epsilon(2-\tilde p_1)}.$$
It can be easily seen that for $\epsilon<1/2$, we have $0<2-\tilde p_1<2$ and $r>1$. At this stage, we can repeat the arguments in \cite[page 216-218]{bl} with $(\rho,\rho_1,\rho_2)$ replaced by $(\rho_\lambda,\rho_\lambda^{(1)},\rho_\lambda^{(2)})$ to estimate $I-VI$. Since the proof of \cite[Proposition 7.1]{bl} also include the case
$p_2(1-\epsilon)\leq 2$, it is only necessary to check that the exponents in the Young's inequalities in \cite[page 216-218]{bl} are greater than one for $2n/(n+2)<p_2\leq 2$. To start with, we observe from \cite[Proposition 7.1, page 216]{bl}
that
$$1-\epsilon+\frac{(2-\epsilon p_0)(2-\tilde p_1)}{2p_0}\leq \frac{2}{p_0}+\omega(64\rho),$$
which implies
$I\leq c\mu\lambda^{2/p_0}.$
To estimate $II$, we conclude from \cite[page 216]{bl} that$$\left(1-\epsilon-\frac{\epsilon(2-\tilde p_1)}{2}\right)\frac{2}{\tilde p_1}=\frac{2}{\tilde p_1}.$$
Since $1<2/(2-\tilde p_1)$, we use Young's inequality with exponents $2/(2-\tilde p_1)$ and $2/\tilde p_1$ to obtain for any $\delta\in(0,1)$ that
\begin{equation}\begin{split}\label{Young2}II&\leq \delta\fint_{Q^{(4)}}\left|\frac{u-u_{Q^{(1)}}}{\rho_\lambda^{(2)}}
\right|^2dz+c(\delta)\mu\lambda^{\left(1-\epsilon-\frac{\epsilon(2-\tilde p_1)}{2}\right)\frac{2}{\tilde p_1}}\\&\leq\delta\fint_{Q^{(4)}}\left|\frac{u-u_{Q^{(1)}}}{\rho_\lambda^{(2)}}
\right|^2dz+c(\delta)\mu\lambda^{\frac{2}{p_0}}.\end{split}\end{equation}
Next, we consider the estimate for $III$. It is easily to check that $\frac{4}{p_2(1-\varepsilon)(2-\tilde p_1)}>1$, since $p_2\leq2$ and $\tilde p_1>0$. From the proof in \cite[page 216]{bl} and $\lambda\geq1$, we conclude that
\[\begin{split}\left(1-\varepsilon+\frac{(2-p_0)(2-\tilde p_1)}{2p_0}\right)\frac{4}{4-p_2(1-\varepsilon)(2-\tilde p_1)}\leq\frac{2}{p_0}+c(\gamma_1)\omega(64\rho).\end{split}\]
From this inequality and $\lambda^{\omega(64\rho)}\leq c$, we use Young's inequality with exponents $\frac{4}{p_2(1-\varepsilon)(2-\tilde p_1)}$ and $\frac{4}{4-p_2(1-\varepsilon)(2-\tilde p_1)}$ to find that
\begin{equation}\begin{split}\label{Young3}III&\leq \delta\fint_{Q^{(4)}}\left|\frac{u-u_{Q^{(1)}}}{\rho_\lambda^{(2)}}
\right|^2dz+c(\delta)\mu\lambda^{\left(1-\varepsilon+\frac{(2-p_0)(2-\tilde p_1)}{2p_0}\right)\frac{4}{4-p_2(1-\varepsilon)(2-\tilde p_1)}}\\&\leq\delta\fint_{Q^{(4)}}\left|\frac{u-u_{Q^{(1)}}}{\rho_\lambda^{(2)}}
\right|^2dz+c(\delta)\mu\lambda^{\frac{2}{p_0}}.\end{split}\end{equation}
We now come to the estimate of $IV$. Notice that if $\epsilon<1$ then $\frac{4}{\epsilon p_2(2-\tilde p_1)}>1$, since $p_2\leq2$ and $\tilde p_1>0$. Analysis similar to that in \cite[page 217]{bl} shows that
$$\frac{4[p_0(1-\epsilon)+(1-\epsilon p_0)(2-\tilde p_1)]}{p_0[4-\epsilon p_2(2-\tilde p_1)]}\leq\frac{2}{p_0}+c(\gamma_1)\omega(\rho).$$
At this point, we follow the same argument in \cite[page 217]{bl} and apply Young's inequality with exponents $\frac{4}{\epsilon p_2(2-\tilde p_1)}$ and  $\frac{4}{4-\epsilon p_2(2-\tilde p_1)}$ to find that
\begin{equation}\begin{split}\label{Young4}IV&\leq \delta\fint_{Q^{(4)}}\left|\frac{u-u_{Q^{(1)}}}{\rho_\lambda^{(2)}}
\right|^2dz+c(\delta)\mu\lambda^{\frac{4[p_0(1-\epsilon)+(1-\epsilon p_0)(2-\tilde p_1)]}{p_0[4-\epsilon p_2(2-\tilde p_1)]}}\\&\leq\delta\fint_{Q^{(4)}}\left|\frac{u-u_{Q^{(1)}}}{\rho_\lambda^{(2)}}
\right|^2dz+c(\delta)\mu\lambda^{\frac{2}{p_0}},\end{split}\end{equation}
since $\lambda^{\omega(\rho)}\leq c$. The estimate of the term $V$ is similar. We first observe that $\frac{4}{p_2(2-\tilde p_1)}>1$, since $p_2\leq2$ and $\tilde p_1>0$.
We also observe that the computation in \cite[page 217]{bl} actually shows that
$$\frac{2[2p_0(1-\epsilon)+(2-p_0-\epsilon p_0)(2-\tilde p_1)]}{p_0[4-p_2(2-\tilde p_1)]}\leq \frac{2}{p_0}+c(\gamma_1)\omega(\rho).$$
Applying Young's inequality with exponents $\frac{4}{p_2(2-\tilde p_1)}$ and $\frac{4}{4-p_2(2-\tilde p_1)}$ we obtain
\begin{equation}\begin{split}\label{Young5}V&\leq \delta\fint_{Q^{(4)}}\left|\frac{u-u_{Q^{(1)}}}{\rho_\lambda^{(2)}}
\right|^2dz+c(\delta)\mu\lambda^{\frac{2[2p_0(1-\epsilon)+(2-p_0-\epsilon p_0)(2-\tilde p_1)]}{p_0[4-p_2(2-\tilde p_1)]}}\\&\leq\delta\fint_{Q^{(4)}}\left|\frac{u-u_{Q^{(1)}}}{\rho_\lambda^{(2)}}
\right|^2dz+c(\delta)\mu\lambda^{\frac{2}{p_0}},\end{split}\end{equation}
since $\lambda^{\omega(\rho)}\leq c$.
We now come to the estimate for $VI$. We first choose $\epsilon<\min\{\frac{1}{4},\frac{2\gamma_1}{8-\gamma_1}\}$ and this ensures that $\frac{4}{(2+\epsilon p_2)(2-\tilde p_1)}>1$.
Furthermore, from the argument in \cite[page 218]{bl}, we deduce that for $\gamma_1<p_1\leq p_2$ there holds
$$\frac{4[1-\epsilon-\epsilon(2-\tilde p_1)]}{4-(2+\epsilon p_2)(2-\tilde p_1)}\leq \frac{2}{p_0}+c(\gamma_1)\omega(\rho).$$
At this stage, we use Young's inequality with exponent $\frac{4}{(2+\epsilon p_2)(2-\tilde p_1)}$ and $\frac{4}{4-(2+\epsilon p_2)(2-\tilde p_1)}$
to see that
\begin{equation}\begin{split}\label{Young6}VI&\leq \delta\fint_{Q^{(4)}}\left|\frac{u-u_{Q^{(1)}}}{\rho_\lambda^{(2)}}
\right|^2dz+c(\delta)\mu\lambda^{\frac{4[1-\epsilon-\epsilon(2-\tilde p_1)]}{4-(2+\epsilon p_2)(2-\tilde p_1)}}\\&\leq\delta\fint_{Q^{(4)}}\left|\frac{u-u_{Q^{(1)}}}{\rho_\lambda^{(2)}}
\right|^2dz+c(\delta)\mu\lambda^{\frac{2}{p_0}}.\end{split}\end{equation}
Plugging the estimates \eqref{Young2}-\eqref{Young6} and $I\leq c\mu\lambda^{2/p_0}$ to \eqref{phirho} and recalling that $\mu=\left(\frac{\rho}{\rho_2-\rho_1}\right)^\beta$ where $\beta$ is a constant depends only on the structural parameters, we choose $\delta=\frac{1}{10}$ to obtain
\[\begin{split}\phi(\rho_1)\leq \frac{1}{2}\phi(\rho_2)+c\left(\frac{\rho}{\rho_2-\rho_1}\right)^\beta\lambda^{\frac{\sigma}{p_0}}\end{split}\]
for any $\rho\leq \rho_1<\rho_2\leq 16\rho$. Finally we use Lemma \ref{iteration lemma} to conclude that
$$\fint_{Q_{4\rho}^{(\lambda)}(z_0)}\left|\frac{u-u_{Q_{4\rho}^{(\lambda)}(z_0)}}{\rho_\lambda}\right|^2 dz\leq c\lambda^{\frac{2}{p_0}},$$
which yields the Lemma.
\end{proof}
The following Proposition is the main result in this section.
\begin{proposition}\label{reverse holder inequality} Let $M\geq 1$ be fixed. Then there exists $\rho_0=\rho_0(n,L,M)>0$ and $\epsilon=\epsilon(n,\gamma_1)>0$ such that the following holds: Suppose that $u$ is a very weak solution to the parabolic system \eqref{3} and that the hypothesis of Theorem \ref{main theorem} are satisfied.
Finally, assume that for some parabolic cylinder $Q_{32\rho}^{(\lambda)}(z_0)\subset\Omega_T$ with $0<32\rho\leq\rho_0$ and $\lambda\ge 1$ the following intrinsic coupling holds:
\begin{equation}\label{lower}
\begin{split}
	\lambda^{1-\varepsilon}
	\le
	\fint_{Q_\rho^{(\lambda)}(z_0)}\left(|Du|+|F|+1\right)^{p(\cdot)(1-\varepsilon)}dz\end{split}\end{equation}
and
\begin{equation}\label{upper}
\begin{split}\fint_{Q_{16\rho}^{(\lambda)}(z_0)}\left(|Du|+|F|+1\right)^{p(\cdot)(1-\varepsilon)}dz\le \lambda^{1-\varepsilon}.
\end{split}\end{equation}
Then we have the following reverse-H\"older inequality:
\[\begin{split}\fint_{Q_\rho^{(\lambda)}(z_0)}|Du|^{p(\cdot)(1-\varepsilon)}dz\leq c\left[\fint_{Q_{2\rho}^{(\lambda)}(z_0)}|Du|^{\frac{p(\cdot)(1-\varepsilon)}{\bar q}}dz\right]^{\bar q}+c\fint_{Q_{2\rho}^{(\lambda)}(z_0)}(1+|F|)^{p(\cdot)(1-\varepsilon)}dz\end{split}\]
where $\bar q=\bar q(n,\gamma_1)>1$ and the constant $c$ depends only on $n,N,\gamma_1,\nu$ and $L$.
\end{proposition}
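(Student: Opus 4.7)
The plan is to apply the Caccioppoli inequality from Theorem~\ref{Caccioppoli} in its general form \eqref{cac2} with $\rho_1=\rho$ and $\rho_2=2\rho$ (so that the constant $\mu$ is universally bounded) on the cylinder $Q_\rho^{(\lambda)}(z_0)$; the hypotheses \eqref{lower}--\eqref{upper} match \eqref{a1}--\eqref{a2} verbatim. This yields an estimate of the form
\[
\lambda^{1-\varepsilon}|Q^{(4)}|+\sup_{t\in\Lambda^{(1)}}\int_{B^{(1)}}|u-u_{Q^{(1)}}|^2 m_{Q^{(4)}}^{-\varepsilon}(\cdot,t)\,dx
\le c(\mathrm{I}+\mathrm{II}+\mathrm{III})+c_1\varepsilon\lambda^{1-\varepsilon}|Q^{(4)}|,
\]
where $Q^{(4)}=Q_{2\rho}^{(\lambda)}(z_0)$ and $\mathrm{I},\mathrm{II},\mathrm{III}$ denote the three integrals on the RHS of \eqref{cac2} in order. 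The strategy is to bound $\mathrm{I}+\mathrm{II}$ by $c(\fint_{Q_{2\rho}^{(\lambda)}(z_0)}|Du|^{p(\cdot)(1-\varepsilon)/\bar q}\,dz)^{\bar q}|Q^{(4)}|+c\,\mathrm{III}$ up to a small multiple of $\lambda^{1-\varepsilon}|Q^{(4)}|$ that can be reabsorbed, together with the term $c_1\varepsilon\lambda^{1-\varepsilon}|Q^{(4)}|$, provided $\varepsilon\le\varepsilon_0(n,\gamma_1)$. Combined with the upper coupling \eqref{upper}, which yields $\fint_{Q_\rho^{(\lambda)}(z_0)}|Du|^{p(\cdot)(1-\varepsilon)}\,dz\le c\lambda^{1-\varepsilon}$, this closes the argument.

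For $\mathrm{I}$, I would first freeze the exponent at $p_1=\inf_{Q^{(6)}}p(\cdot)$, using \eqref{rho-bound} and \eqref{9} to absorb the resulting $\rho^{-(p_2-p_1)}\lambda^{(p_2-p_1)/p_0}$ into a universal constant, and then apply spatial Sobolev--Poincar\'e slice-wise with target exponent $p_1(1-\varepsilon)/\bar q$. The admissibility of this exponent in the subquadratic range uses exactly $\gamma_1>2n/(n+2)$ and determines $\bar q=\bar q(n,\gamma_1)>1$, in the same spirit as the function $\Phi_{n,\gamma_1}$ appearing in the proof of Lemma~\ref{Estimates for the lower order terms}. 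Rescaling against the intrinsic radius $\rho_\lambda=\lambda^{(p_0-2)/(2p_0)}\rho$ and integrating in time yields the bound $\mathrm{I}\le c(\fint_{Q_{2\rho}^{(\lambda)}(z_0)}|Du|^{p(\cdot)(1-\varepsilon)/\bar q}\,dz)^{\bar q}|Q^{(4)}|+c\,\mathrm{III}$.

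The principal obstacle is $\mathrm{II}$, since $|u-u_{Q^{(1)}}|^2$ overshoots the natural growth when $p(\cdot)\le 2$. My plan is to proceed as in the proof of Lemma~\ref{Estimates for the lower order terms}: apply Gagliardo--Nirenberg (Lemma~\ref{Gagliardo-Nirenberg}) slice-wise with $\sigma=2$ and a carefully chosen triple $(q,r,\theta)$, interpolating the $L^2$ norm between a spatial Lebesgue--Sobolev factor at the reduced exponent $p(\cdot)(1-\varepsilon)/\bar q$ and an $L^r$ factor to be paired with the Caccioppoli supremum $\sup_t\int_{B^{(1)}}|u-u_{Q^{(1)}}|^2 m_{Q^{(4)}}^{-\varepsilon}\,dx$. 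After H\"older in time, the lower bound $m_{Q^{(4)}}\ge c_E\widetilde\lambda$ combined with the control $\widetilde\lambda\le c\lambda$ (a consequence of Lemma~\ref{Estimates for the lower order terms} and $p(\cdot)(1-\varepsilon)\le 2$) turns the combined weight, together with the prefactor $\lambda^{-\varepsilon}$ in $\mathrm{II}$, into a net small power $\lambda^{-\varepsilon\theta}$ that reabsorbs against $\lambda^{1-\varepsilon}|Q^{(4)}|$ on the LHS once $\varepsilon\le\varepsilon_0(n,\gamma_1)$. The surviving spatial-gradient factor, modulo a Poincar\'e correction handled as in $\mathrm{I}$, is again of the form $\fint|Du|^{p(\cdot)(1-\varepsilon)/\bar q}$. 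Collecting the bounds on $\mathrm{I},\mathrm{II},\mathrm{III}$, reabsorbing all $\lambda^{1-\varepsilon}$-fractions, and invoking \eqref{upper} completes the proof.
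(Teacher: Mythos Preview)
Your overall strategy---Caccioppoli inequality, Gagliardo--Nirenberg for the quadratic term, and Lemma~\ref{Estimates for the lower order terms} to control $\widetilde\lambda$---matches the paper's. However, two steps in your mechanism do not close as written.

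\textbf{The supremum is on the wrong ball.} In \eqref{cac2} with $\rho_1=\rho$, $\rho_2=2\rho$, the supremum on the left is over $B^{(1)}=B_\rho^{(\lambda)}$, while the integral $\mathrm{II}$ lives on $Q^{(4)}=Q_{2\rho}^{(\lambda)}$. You cannot ``pair'' a Gagliardo--Nirenberg factor over $2B$ with a supremum over the strictly smaller ball $B$. The paper resolves this by applying Caccioppoli a \emph{second} time, now with $\rho_1=2\rho$, $\rho_2=4\rho$, which produces the quantity
\[
J=\sup_{t\in2\Lambda}\fint_{2B}\Big|\tfrac{u-u_{2Q}}{\rho_\lambda}\Big|^{2}m_{4Q}^{-\varepsilon}\,dx,
\]
and then bounds the right-hand side of that second Caccioppoli by $c\lambda^{2/p_0-\varepsilon}$ via Lemma~\ref{Estimates for the lower order terms}. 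This step, and the companion bound $\fint_{2Q}m_{4Q}^{1-\varepsilon}\le c\lambda^{1-\varepsilon}$, are where Lemma~\ref{Estimates for the lower order terms} actually enters; your use of it only to get $\widetilde\lambda\le c\lambda$ is not enough.

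\textbf{Slice-wise Sobolev--Poincar\'e does not give the reduced space-time average for $\mathrm{I}$.} Applying spatial Sobolev--Poincar\'e to $\big|\frac{u-u_{Q^{(1)}}}{\rho_\lambda}\big|^{p_1(1-\varepsilon)}$ at each time yields $\fint_\Lambda\big(\fint_B|Du|^{q}\big)^{\bar q}dt$, and Jensen goes the wrong way to reach $\big(\fint_{2Q}|Du|^{q}\big)^{\bar q}$; moreover the time-oscillation $(u)_B(t)-u_{Q^{(1)}}$ is not controlled this way. The paper instead treats $I_{p_2(1-\varepsilon)}$ and $I_2$ \emph{uniformly}: for both $\sigma$ it applies Gagliardo--Nirenberg with parameters $(\sigma,q_1,2(1-\varepsilon),q_1/\sigma)$, $q_1=2n/(n+2-2\varepsilon)$, pulls out the factor $J^{(\sigma-q_1)/2}$ via H\"older, and only then applies the parabolic Poincar\'e Lemma~\ref{sobolev1} to the remaining $L^{q_2}$ factor. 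The exponent bookkeeping (your $\bar q$) is done through the function $\widetilde\Phi_{n,\gamma_1}$, which is where $\gamma_1>\tfrac{2n}{n+2}$ is used. Once you follow this route, the absorption of the residual $\delta\lambda^{1-\varepsilon}$ against \eqref{lower} works exactly as you describe at the end.
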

\begin{proof}
In the following we abbreviate $\rho_\lambda=\lambda^{(p_0-2)/(2p_0)}\rho$ and $\alpha Q\equiv \alpha B\times\alpha\Lambda:=Q_{\alpha\rho}^{(\lambda)}(z_0)$ for $\alpha\ge 1$.
The Caccioppoli inequality from Theorem \ref{Caccioppoli} (i.e. estimate \eqref{cac2} applied with $\rho_1=\rho$ and $\rho_2=2\rho$) implies that
\begin{equation}
\begin{split}\label{Du}
	&\fint_{Q}|Du|^{p(\cdot)(1-\varepsilon)}dz\\
	&\leq
	c\fint_{2Q}\left|\frac{u-u_{Q}}{\rho_\lambda}\right|^{p(\cdot)(1-\varepsilon)}dz+
	c\lambda^{\frac{p_0-2}{p_0}-\varepsilon}
	\fint_{2Q}\left|\frac{u-u_{Q}}{\rho_\lambda}\right|^2dz+
	c\fint_{2Q}(1+|F|)^{p(\cdot)(1-\varepsilon)}dz\\
	&\leq
	cI_{p_2(1-\varepsilon)}+c\lambda^{\frac{p_0-2}{p_0}-\varepsilon}I_{2}+
	c\fint_{2Q}(1+|F|)^{p(\cdot)(1-\varepsilon)}dz,
\end{split}\end{equation}
where we have abbreviated
\begin{equation*}
	I_{\sigma}
	:=
	\fint_{2Q}\left|\frac{u-u_{2Q}}{\rho_\lambda}\right|^\sigma dz
\end{equation*}
for $\sigma=p_2(1-\varepsilon)$ and $\sigma=2$. Let
$q_1:=2n/(n+2-2\varepsilon)$, we choose $\epsilon$ so small that $q_1<\sigma$. This can be seen as follows:

Let
$$\Psi_{n,\gamma_1}(\epsilon)=\gamma_1(1-\epsilon)-\frac{2n}{n+2-2\epsilon}.$$ From $\gamma_1>2n/(n+2)$, we observe that $\Psi_{n,\gamma_1}(\epsilon)$ is a continuous function on the interval $(0,1/2)$ and $\Psi_{n,\gamma_1}(0)>0$. Then there exists an $\epsilon_0=\epsilon_0(n,\gamma_1)>0$ such that for any $0<\epsilon<\epsilon_0$ there holds $\Psi_{n,\gamma_1}(\epsilon)>0$ and therefore  $q_1<\sigma$, since $\gamma_1\leq p_1$.

We now apply Gagliardo-Nirenberg's inequality, i.e. Lemma \ref{Gagliardo-Nirenberg} with $(\sigma,q,r,\theta)$ replaced by $(\sigma,q_1,2-2\varepsilon,q_1/\sigma)$ slice-wise to $(u-u_{2Q})(\cdot,t)$. In this way we obtain
\[\begin{split}
	I_{\sigma}
	\leq
	c\fint_{2\Lambda}\left(\fint_{2B}\left|\frac{u-u_{2Q}}{\rho_\lambda}\right|^{q_1}+|Du|^{q_1}dx\right)
	\left(\fint_{2B}\left|\frac{u-u_{2Q}}{\rho_\lambda}\right|^{2-2\varepsilon}dx\right)^{\frac{\sigma-q_1}{2-2\varepsilon}} dt.
\end{split}\]
In order to estimate the right hand side of the estimate, we use H\"older's inequality to obtain
\[\begin{split}
	\fint_{2B}\left|\frac{u-u_{2Q}}{\rho_\lambda}\right|^{2-2\varepsilon}(\cdot,t)dx
	\leq
	J^{1-\varepsilon}
\left(\fint_{2B}m_{4Q}^{1-\varepsilon}(\cdot,t)dx\right)^{\varepsilon}
\end{split}\]
for a.e. $t\in 2\Lambda$, where
$$
	J
	:=
	\sup_{t\in2\Lambda}\fint_{2B\times\{t\}}\left|\frac{u-u_{2Q}}{\rho_\lambda}\right|^{2}
m_{4Q}^{-\varepsilon}\, dx.
$$
Inserting this above and applying H\"older's inequality with exponents $r=(2-2\varepsilon)/(\varepsilon(\sigma-q_1))$ and $r^\prime=(2-2\varepsilon)/(2-2\varepsilon-\varepsilon(\sigma-q_1))$ we get
\[\begin{split}
	I_{\sigma}
	\leq
	cJ^{\frac{\sigma-q_1}{2}}
	\left(\fint_{2Q}\left|\frac{u-u_{2Q}}{\rho_\lambda}\right|^{q_1r^\prime}+|Du|^{
q_1r^\prime}dz\right)^{\frac{1}{r^\prime}}
\left(\fint_{2Q}m_{4Q}(z)^{1-\varepsilon}dz\right)^{\frac{\varepsilon(\sigma-q_1)}{2-2\varepsilon}}.
\end{split}\]
To proceed further, we apply the Caccioppoli type inequality from Theorem \ref{Caccioppoli}  (more precisely, we use \eqref{cac2} with the choice $\rho_1=2\rho$ and $\rho_2=4\rho$) and subsequently Lemma \ref{Estimates for the lower order terms} to get
$J\leq c\lambda^{\frac{2}{p_0}-\varepsilon}.$
Moreover, from Proposition \ref{Estimates for the lower order terms} we infer that
\[\begin{split}
	\fint_{2Q}m_{4Q}^{1-\varepsilon}\,dz
	&\leq
	\widetilde{\lambda}^{1-\varepsilon}+
	\fint_{2Q}M_{4Q} \,dz\leq
	c\lambda^{1-\varepsilon}.
\end{split}\]
Inserting the last two estimates above yields
\[\begin{split}
	I_{\sigma}
	&\leq
	c\lambda^{\frac{\sigma-q_1}{p_0}}
	\left(\fint_{2Q}\left|\frac{u-u_{2Q}}{\rho_\lambda}\right|^{q_2}+|Du|^{
q_2}dz\right)^{\frac{1}{r^\prime}}.
\end{split}\]
where $q_2:=q_1r^\prime$. It can be easily seen that $q_2\leq p_1(1-\epsilon)$ if we choose $\epsilon$ small enough.
We now apply Lemma \ref{sobolev1} with $(\theta,\widetilde{\Omega}\times T_1,\widetilde{\Omega}\times T_2)$ replaced by $(q_2,2Q,2Q)$ to conclude that
\[\begin{split}
	I_{\sigma}
	\leq
	c \lambda^{\frac{\sigma-q_1}{p_0}}
	\left[\left(\fint_{2Q}(1+|Du|)^{\frac{p(\cdot)q_2}{p_1}}dz\right)^{\frac{1}{r^\prime}} +
	\left(\lambda^{\frac{2-p_0}{p_0}}\fint_{2Q}(1+|Du|+|F|)^{\frac{p(\cdot)(p_2-1)}{p_2}}dz\right)^{q_1}\right] .
\end{split}\]
Now, we will find a lower bound for the exponents appearing on the right-hand side; note that there are three different exponents: $p(\cdot)q_2/p_1$ with the choices $\sigma=2$ and $\sigma=p_2(1-\epsilon)$ and $p(\cdot)(p_2-1)/p_2$. We note that in first two cases $\sigma\leq2$ and for the third one we have $p_2(1-\epsilon)/(p_2-1)\geq 2-2\epsilon$.
Our next goal is to determine the lower bound for $p_1(1-\epsilon)/q_2$.
To this aim we define a function
$$\widetilde \Phi_{n,\gamma_1}(\epsilon)=\frac{\gamma_1\left(1-\frac{n+4}{n+2}\epsilon\right)}{q_1(\epsilon)}.$$
Observe that $\widetilde \Phi_{n,\gamma_1}(\epsilon)$ is a decreasing and continuous function on $(0,1)$. Since $\gamma_1>2n/(n+2)$, we find that $\widetilde \Phi_{n,\gamma_1}(0)>1$. Then there exists an $\epsilon_0>0$ such that for any $\epsilon\in(0,\epsilon_0)$ there holds $\widetilde \Phi_{n,\gamma_1}(\epsilon)>\widetilde \Phi_{n,\gamma_1}(\epsilon_0)>1$. Next we conclude that for any $\epsilon\in(0,\epsilon_0)$, $$\frac{p_1(1-\epsilon)}{q_2}\geq \frac{\gamma_1(1-\epsilon)}{q_1r^\prime}>\widetilde \Phi_{n,\gamma_1}(\epsilon)>\widetilde \Phi_{n,\gamma_1}(\epsilon_0)>1.$$
Therefore, letting
$\bar q:=\min\left\{\widetilde \Phi_{n,\gamma_1}(\epsilon_0), 2-2\epsilon\right\}$,
we can use H\"older's inequality to obtain
\begin{equation}\begin{split}\label{sigma}
	I_{\sigma}
	&\leq
	c \lambda^{\frac{\sigma-q_1}{p_0}}
	\left(\fint_{2Q}(1+|Du|)^{\frac{p(\cdot)(1-\epsilon)}{\bar q}}dz\right)^{\frac{\bar q q_1}{p_1(1-\epsilon)}}\\&+c\lambda^{\frac{\sigma-q_1}{p_0}}\lambda^{\frac{(2-p_0)q_1}{p_0}}\left(\fint_{2Q}(1+|Du|+|F|)^{\frac{p(\cdot)(1-\epsilon)}{\bar q}}dz\right)^{\frac{\bar q q_1(p_2-1)}{p_2(1-\epsilon)}},
\end{split}\end{equation}
since $q_2:=q_1r^\prime$. Again by H\"older's inequality and the hypothesis (\ref{upper}), we obtain for the first term on the right-hand side that
\begin{equation}\label{i}
	\lambda^{\frac{\sigma-q_1}{p_0}}
	\left(\fint_{2Q}(1+|Du|)^{\frac{p(\cdot)(1-\epsilon)}{\bar q}}dz\right)^{\frac{\bar q q_1}{p_1(1-\epsilon)}}
	\le
	\lambda^{\frac{\sigma}{p_0}-1+\epsilon}
	\left(\fint_{2Q}(1+|Du|)^{\frac{p(\cdot)(1-\epsilon)}{\bar q}}dz\right)^{\bar q}.
\end{equation}
The task now is to estimate the second integral on the right-hand side. We follow the arguments similarly as \cite[page 229]{BD} to obtain
\begin{equation*}\begin{split}
	&\lambda^{\frac{\sigma-q_1}{p_0}}
	\lambda^{\frac{(2-p_0)q_1}{p_0}}
	\left(\fint_{2Q}(1+|Du|+|F|)^{\frac{p(\cdot)(1-\epsilon)}{\bar q}}dz\right)^{\frac{\bar q q_1(p_2-1)}{p_2(1-\epsilon)}} \\
	&\le
	\delta\lambda^{\frac{\sigma}{p_0}}+
	\delta^{-\frac{1}{\gamma_1-1}}\lambda^{\frac{\sigma-q_1}{p_0}}
	\left(\fint_{2Q}(1+|Du|+|F|)^{\frac{p(\cdot)(1-\epsilon)}{\bar q}}dz\right)^{\frac{\bar q q_1}{p_1(1-\epsilon)}}.
\end{split}\end{equation*}
In the same manner as (\ref{i}) we can estimate the second term of the right hand side from above and obtain
\begin{equation}\begin{split}\label{ii}
	&\lambda^{\frac{\sigma-q_1}{p_0}}
	\lambda^{\frac{(2-p_0)q_1}{p_0}}
	\left(\fint_{2Q}(1+|Du|+|F|)^{\frac{p(\cdot)(1-\epsilon)}{\bar q}}dz\right)^{\frac{\bar q q_1(p_2-1)}{p_2(1-\epsilon)}} \\
	&\le
	\delta\lambda^{\frac{\sigma}{p_0}}+\delta^{-\frac{1}{\gamma_1-1}}\lambda^{\frac{\sigma}{p_0}-1+\epsilon}
	\left(\fint_{2Q}(1+|Du|+|F|)^{\frac{p(\cdot)(1-\epsilon)}{\bar q}}dz\right)^{\bar q},
\end{split}\end{equation}
Inserting (\ref{i}) and (\ref{ii}) to (\ref{sigma}), we find that
\begin{align*}
	I_{\sigma}
	&\leq\delta\lambda^{\frac{\sigma}{p_0}}+
	c \lambda^{\frac{\sigma}{p_0}-1+\epsilon}
	\left(\fint_{2Q}(1+|Du|+|F|)^{\frac{p(\cdot)(1-\epsilon)}{\bar q}}dz\right)^{\bar q}  .
\end{align*}
Next, inserting the estimate for $I_\sigma$ with $\sigma=2$ and $\sigma=p_2(1-\epsilon)$ above to the estimate (\ref{Du}), we arrive at
\begin{equation}\begin{split}\label{Du1}
	\fint_{Q} |Du|^{p(\cdot)(1-\varepsilon)}dz
	\leq c\delta\lambda^{1-\epsilon}+
	c&\left(\fint_{2Q}(1+|Du|+|F|)^{\frac{p(\cdot)(1-\epsilon)}{\bar q}}dz\right)^{\bar q}\\&+
	c\fint_{2Q}(1+|F|)^{p(\cdot)(1-\varepsilon)}dz.
\end{split}\end{equation}
At this stage we use (\ref{lower}) in order to bound $\lambda^{1-\epsilon}$ in the following form:
$$\lambda^{1-\epsilon}\le
	c_1\fint_{Q}|Du|^{p(\cdot)(1-\varepsilon)}dz+c_2\fint_{Q}\left(|F|+1\right)^{p(\cdot)(1-\varepsilon)}dz.$$
Plugging this inequality to (\ref{Du1}) and choosing $\delta=1/(cc_1)$ we can reabsorb the integral $\frac{1}{2}\fint_{Q}|Du|^{p(\cdot)(1-\varepsilon)}dz$ into the left hand side and arrive at
\begin{align*}
	\fint_{Q} |Du|^{p(\cdot)(1-\varepsilon)}dz\leq
	c\left(\fint_{2Q}|Du|^{\frac{p(\cdot)(1-\epsilon)}{\bar q}}dz\right)^{\bar q}+
	c\fint_{2Q}(1+|F|)^{p(\cdot)(1-\varepsilon)}dz.
\end{align*}
This proves the reverse H\"older inequality.
\end{proof}
\section{Proof of the main theorem}
In this section we will prove the higher integrability of very weak solutions stated in Theorem \ref{main theorem}. The idea now, is to prove estimates for $|Du|^{p(\cdot)(1-\epsilon)}$ on certain upper level sets. The argument uses a certain stopping time argument which allows to construct a covering of the upper level sets. This method has its origin in \cite{KL1, KL2}; a slightly simplified version can be found in \cite{BD} and \cite{bl}.
Since most of the arguments are standard by now, we will only give the main ideas to the proof and refer to \cite[section 9]{bl} and \cite[\S 7]{BD} for the details.

Let $M\ge 1$ and suppose that \eqref{6} is satisfied. From now on, we consider a parabolic cylinder $Q_r\equiv Q_r(\mathfrak z_0)$ such that $Q_{2r}\Subset\Omega_T$
and define
\begin{equation}\label{def-lambda-0}
	 \lambda_0^{\frac{1}{d(p_m)}+\frac{2}{p_M}-\frac{2}{p_m}-\epsilon}
	:=
	\fint_{Q_{2r}} \big(|Du|+|F| +1\big)^{p(\cdot)(1-\epsilon)} dz
    \ge 1,
\end{equation}
where $p_M:= \sup_{Q_{2r}}p(\cdot)$, $p_m:= \inf_{Q_{2r}}p(\cdot)$ and the function $d(p)$ is defined by $d(p)=2p/(p(n+2)-2n)$. We now choose a constant $r_0(\gamma_1)>0$ such that for any $r<r_0$,
$p_M-p_m\leq \omega(2r)\leq \omega(2r_0)\leq 1/(4d(\gamma_1))$. We next choose $\epsilon<1/(12d(\gamma_1))$, then there holds:
\begin{equation}\label{epsilon}\frac{1}{d(p_m)}+\frac{2}{p_M}-\frac{2}{p_m}-\epsilon>\frac{1}{2}\left(\frac{1}{d(p_m)}+\frac{2}{p_M}-\frac{2}{p_m}\right)>0.\end{equation}
In the following argument we assume that $r<r_0$.
For fixed $r\le r_1<r_2\le 2r$ we consider the concentric parabolic cylinders
\begin{equation*}
	Q_{r}
    \subseteq
    Q_{r_1}
    \subset
    Q_{r_2}
    \subseteq
    Q_{2r}\,.
\end{equation*}
In the following we shall consider parameters $\lambda$ such that
\begin{align}\label{lambda-choice}
	\lambda > B\lambda_0,
    \qquad\mbox{where }\qquad
    B^{\frac{1}{d(p_m)}+\frac{2}{p_M}-\frac{2}{p_m}-\epsilon}
    :=
    \Big(\frac{8\chi r}{r_2-r_1}\Big)^{n+2},
\end{align}
and for $z_0\in Q_{r_1}$ we consider radii $\rho$ satisfying
\begin{equation}\label{bound-rho}
	\frac{r_2-r_1}{4\chi}
	\le
	\rho
	\le
	\frac{r_2-r_1}{2},
\end{equation}
where $\chi=\chi(n,\gamma_1)\ge 5$ denotes the constant from a version of Vitali's covering theorem \cite[Lemma 7.1]{BD} for non-uniformly parabolic cylinders.
Note that this choice ensures that $Q_\rho^{(\lambda)}(z_0)\subset Q_{r_2}$.
Recalling the definition of $\lambda_0$ we get by enlarging the domain of integration from $Q_\rho^{(\lambda)}(z_0)$ to $Q_{2r}$ the following estimate:
\begin{align*}
	\fint_{Q_\rho^{(\lambda)}(z_0)} (|Du|+|F| +1)^{p(\cdot)(1-\epsilon)} dz
	&\le
	\frac{|Q_{2r}|}{|Q_\rho^{(\lambda)}(z_0)|}
	\fint_{Q_{2r}} (|Du|+|F| +1)^{p(\cdot)(1-\epsilon)} dz \\
	&\le
	\left(\frac{8\chi r}{\lambda^{\frac{p(z_0)-2}{2p(z_0)}}(r_2-r_1)}\right)^{n+2}\lambda^{\frac{p(z_0)-2}{p(z_0)}}
    \lambda_0^{\frac{1}{d(p_m)}+\frac{2}{p_M}-\frac{2}{p_m}-\epsilon}.
\end{align*}
Since $p_m\leq p(z_0)\leq p_M$ and $\lambda\geq1$, we use \eqref{lambda-choice} to estimate the integral on the left hand side,
\begin{align*}
	\fint_{Q_\rho^{(\lambda)}(z_0)} (|Du|+|F| +1)^{p(\cdot)(1-\epsilon)} dz
	&\le
	\left(\frac{8\chi r}{\lambda^{\frac{p_m-2}{2p_m}}(r_2-r_1)}\right)^{n+2}\lambda^{\frac{p_M-2}{p_M}}
    \lambda_0^{\frac{1}{d(p_m)}+\frac{2}{p_M}-\frac{2}{p_m}-\epsilon}
    \\&=\left(\frac{8\chi r}{r_2-r_1}\right)^{n+2}\lambda^{1-\frac{1}{d(p_m)}-\frac{2}{p_M}+\frac{2}{p_m}}
    \lambda_0^{\frac{1}{d(p_m)}+\frac{2}{p_M}-\frac{2}{p_m}-\epsilon}
    \\&\leq\left(\frac{8\chi r}{r_2-r_1}\right)^{n+2}\frac{\lambda^{1-\epsilon}}{B^{\frac{1}{d(p_m)}+\frac{2}{p_M}-\frac{2}{p_m}-\epsilon}}=\lambda^{1-\epsilon}.
\end{align*}
For $\lambda$ as in \eqref{lambda-choice} we consider the upper level set
$E(\lambda,r_1):=\bigl\{z\in Q_{r_1}: |Du(z)|^{p(z)}>\lambda\bigr\}.$
In the following we show that also a reverse inequality holds true for small radii and for the Lebesgue points $z_0\in E(\lambda,r_1)$. By Lebesgue's differentiation theorem (see \cite[(7.9)]{BD}) we infer for any $z_0\in E(\lambda,r_1)$ that
\begin{align*}
	\lim_{\rho\downarrow 0} \fint_{Q_\rho^{(\lambda)}(z_0)} (|Du|+|F| +1)^{p(\cdot)(1-\epsilon)}  dz
	\ge
	|Du(z_0)|^{p_0(1-\epsilon)}
	>
    \lambda^{1-\epsilon}\,.
\end{align*}
From the preceding reasoning we conclude that the last inequality yields a radius for which the considered integral takes a value larger than $\lambda^{1-\epsilon}$, and on the other hand, the integral is smaller than $\lambda^{1-\epsilon}$ for any radius satisfying \eqref{bound-rho}. Therefore, the continuity of the integral yields the existence of a maximal radius $\rho_{z_0}$ in between, i.e.
$0<\rho_{z_0}<\frac{r_2-r_1}{4\chi}$
such that
\begin{equation}\label{stop-1}
	\fint_{Q_{\rho_{z_0}}^{(\lambda)}(z_0)} (|Du|+|F| +1)^{p(\cdot)(1-\epsilon)} dz
	=
	\lambda^{1-\epsilon}
\end{equation}
holds and
\begin{equation}\label{stop-2}
	\fint_{Q_\rho^{(\lambda)}(z_0)} (|Du|+|F| +1)^{p(\cdot)(1-\epsilon)} dz
	<
	\lambda^{1-\epsilon}
	\qquad\forall\, \rho\in (\rho_{z_0}, \tfrac{r_2-r_1}{2}]\,.
\end{equation}
At this stage we note that
$Q_{4\chi\rho_{z_0}}^{(\lambda)}(z_0)\subseteq Q_{r_2}$ and therefore by \eqref{stop-1} and \eqref{stop-2} we conclude, that the assumptions of Proposition \ref{reverse holder inequality} are fulfilled. Note here that $16\le 4\chi$ and therefore $16\rho_{z_0}\in (\rho_{z_0}, \frac{r_2-r_1}{2}]$.
We now impose the following bound on the radius $r$:
\begin{equation*}
    r\le r_1\equiv r_1(n,N,\gamma_1,\nu, L, M)\,,
\end{equation*}
where $r_1$ denotes the radius bound from Proposition \ref{reverse holder inequality}.
We are now allowed to apply Proposition \ref{reverse holder inequality}, which yields
the following Reverse-H\"{o}lder inequality:
\begin{align}\label{HI-du}
	\fint_{Q_{\rho_{z_0}}^{(\lambda)}(z_0)} |Du|^{p(\cdot)(1-\epsilon)} dz
	\le
	c \bigg( \fint_{Q_{2\rho_{z_0}}^{(\lambda)}(z_0)}
    |Du|^{\frac{p(\cdot)(1-\epsilon)}{\bar q}} dz \bigg)^{\bar q} +
	c \fint_{Q_{2\rho_{z_0}}^{(\lambda)}(z_0)} (|F|+1)^{p(\cdot)(1-\epsilon)} dz\,,
\end{align}
where $\bar q=\bar q(n,\gamma_1)>1$ and $c=(n,N,\nu,L,\gamma_1)$.

Keeping (\ref{HI-du}) in mind, we follow the arguments in \cite{bb}, \cite[section 9]{bl} and \cite[\S 7]{BD} to find a constant $r_2=r_2(n,N,\gamma_1,\nu,L,M)$ such that for any $r<r_2$ there holds the estimate
\begin{align*}
    \int_{Q_{r}} |Du|^{p(\cdot)} dz
    \le
    c\bigg(\lambda_0^{\epsilon}
    \int_{Q_{2r}} |Du|^{p(\cdot)(1-\epsilon)} dz +
	\int_{Q_{2r}} (|F|+1)^{p(\cdot)} dz\bigg)\,,
\end{align*}
for a constant $c=(n,N,\nu,L,\gamma_1)$.
Finally, passing to averages and recalling the definition of $\lambda_0$, i.e. \eqref{def-lambda-0} and the inequality \eqref{epsilon}, we deduce that
\begin{equation}\begin{split}\label{wrong-exp}
	\fint_{Q_r} |Du|^{p(\cdot)} dz
    \le
	&c \bigg(\fint_{Q_{2r}}	
    (|Du|+|F|+1)^{p(\cdot)(1-\epsilon)} dz
	 \bigg)^{1+2\epsilon\left[\frac{1}{d(p_m)}+\frac{2}{p_M}-\frac{2}{p_m}\right]^{-1}}\\& +
	c\,
	\fint_{Q_{2r}}(|F|+1)^{p(\cdot)}dz\, .
\end{split}\end{equation}
At this stage, we set $r\le r_3(M)\le 1/(4M)$
and follow the argument in \cite[page 245-247]{BD} to obtain
$$\bigg(\fint_{Q_{2r}}	
    (|Du|+|F|+1)^{p(\cdot)(1-\epsilon)} dz
	 \bigg)^{\epsilon\left[\left(\frac{1}{d(p_m)}+\frac{2}{p_M}-\frac{2}{p_m}\right)^{-1}-d(p_0)\right]}\leq c(n,L,\gamma_1),$$
where $p_0\equiv p(\mathfrak z_0)$ denotes the value of $p(\cdot)$ evaluated at the center $\mathfrak z_0$ of $Q_{2r}\equiv Q_{2r}(\mathfrak z_0)$.
This finally  completes the proof of Theorem \ref{main theorem}.
\section*{Acknowledgement}
This work was supported by the Independent Innovation Foundation of Wuhan University of Technology (Grant No. 20410685).
\bibliographystyle{abbrv}

\end{document}